\newtheorem{theorem}{{\sc Theorem}}[section]
\newtheorem{cor}[theorem]{{\sc Corollary}}
\newtheorem{lemma}[theorem]{{\sc Lemma}}
\newtheorem{prop}[theorem]{{\sc Proposition}}
\theoremstyle{remark}
\newtheorem{remark}[theorem]{{\sc Remark}}
\theoremstyle{definition}
\newtheorem{cond}[theorem]{\sc condition}
\newcommand{\R}{\mathbb{R} }
\newcommand{\N}{\mathbb{N} }
\newcommand{\A}{\mathcal{A}}
\newcommand{\B}{\mathcal{B}}
\newcommand{\F}{\mathcal{F}}
\newcommand{\D}{\mathcal{D}}
\newcommand{\K}{\mathcal{K}}
\newcommand{\calH}{\mathcal{H}}
\newcommand{\calL}{\mathcal{L}}
\newcommand{\la}{\lambda}
\newcommand{\al}{\alpha}
\newcommand{\be}{\beta}
\newcommand{\ga}{\gamma}
\newcommand{\Om}{\Omega}
\newcommand{\albe}{_{\al,\be}}
\newcommand{\abquer}{\overline{(a,b)}}
\newcommand{\htilde}{\tilde{h}}
\providecommand{\abs}[1]{\lvert #1\rvert}
\providecommand{\fnorm}[1]{\lVert #1\rVert_\infty}
\DeclareMathOperator{\supp}{supp}
\renewcommand{\phi}{\varphi}
\renewcommand{\epsilon}{\varepsilon}
\renewcommand{\rho}{\varrho}
\begin{document}
\title[Stein's method for absolutely continuous distributions]{Stein's method of exchangeable pairs for absolutely continuous, univariate distributions with applications to the Polya urn model}
\author{Christian D\"obler}
%\address{Ruhr-Universit\"at Bochum, Fakult\"at f\"ur Mathematik, NA 3/68, D-44780 Bochum, Germany.} 
%\email{christian.doebler@ruhr-uni-bochum.de}
\thanks{The author has been supported by Deutsche Forschungsgemeinschaft via SFB-TR 12.\\
Ruhr-Universit\"at Bochum, Fakult\"at f\"ur Mathematik, NA 3/68, D-44780 Bochum, Germany. \\
christian.doebler@ruhr-uni-bochum.de\\
{\it Keywords:} Stein's method, Beta distributions, exchangeable pairs, Polya urn, density approach}
\begin{abstract}
We propose a way of finding a Stein type characterization of a given absolutely continuous distribution $\mu$ on $\R$ which is motivated by a regression property satisfied by an exchangeable pair $(W,W')$ where $\calL(W)$ is supposed or known to be close to $\mu$. We also develop the exchangeable pairs approach within this setting. This general procedure is then specialized to the class of Beta distributions and as an application, a convergence rate for the relative number of drawn red balls among the first $n$ drawings from a Polya urn is computed. 
\end{abstract}

\maketitle

\section{Introduction}\label{intro}
Since its introduction in \cite{St72} in 1972 Stein's method has become a very famous and useful tool for proving distributional convergence. One of its main advantages is, that it usually automatically yields a concrete error bound for various distributional distances. Being first only developed for normal approximation, it was observed by several authors, that Stein's technique of characterizing operators is by no means restricted to the normal distribution (see, e.g. \cite{Ch75} for the Poisson distribution, \cite{St86} and \cite{DHRS} for absolutely continuous distributions on $\R$). Highly recommendable accounts of Stein's method are the books \cite{CGS} for the normal approximation and \cite{BarCh} for a general introduction into the method.\\
In this paper we consider an absolutely continuous distribution $\mu$ on $(\R,\B)$ with density function $p$ and corresponding distribution function $F$. Let $-\infty\leq a<b\leq\infty$ be extended real numbers, such that $\supp\bigl(\mu\bigr)\subseteq\abquer$ and such that for each choice of real numbers $a'>a$ and $b'<b$ we have that $\supp\bigl(\mu\bigr)\not\subseteq\overline{(a',b)}$ and 
$\supp\bigl(\mu\bigr)\not\subseteq\overline{(a,b')}$, i.e. if $a$ is real, then it is the left endpoint of the support of $\mu$ and if $b$ is real it is the right endpoint of $\supp\bigl(\mu\bigr)$. Here and in what follows, the closing operation is with respect to the usual topology on $\R$. If the density $p$ is positive on $(a,b)$ and absolutely continuous on every compact subinterval of $(a,b)$, then the so called \textit{density approach} within Stein's method gives a Stein identity and even a Stein characterization for $\mu$ (see \cite{DHRS}, \cite{ChSh} or\cite{CGS}). In short, a real valued random variable $X$ has distribution $\mu$ if and only if for each function $f$ in a suitable class $\F$ we have

\begin{equation}\label{steiniddens}
E\bigl[f'(X)+\psi(X)f(X)\bigr]=0\,, 
\end{equation}

where $\psi(x):=\frac{d}{dx}\log p(x)$ is the logarithmic derivative of $p$. For a given Borel-measurable test function $h$ with 
$\int_\R\abs{h(x)}d\mu(x)<\infty$ this characterization motivates the so called \textit{Stein's equation} 

\begin{equation}\label{steineqdens}
f'(x)+\psi(x)f(x)=h(x)-\mu(h)\,, 
\end{equation}
to be solved for $f$, where we write $\mu(h)$ for $\int_\R hd\mu$. It turns out, that a solution $f_h$ to (\ref{steineqdens}) on $(a,b)$ is given by 

\begin{equation}\label{steinsoldens}
f_h(x):=\frac{1}{p(x)}\int_{-\infty}^x\bigl(h(y)-\mu(h)\bigr)p(y)dy=-\frac{1}{p(x)}\int_x^\infty \bigl(h(y)-\mu(h)\bigr)p(y)dy
\end{equation}

and that, if $f_h$ is bounded and $\frac{1}{p}$ is unbounded on $(a,b)$, then $f_h$ is the only bounded solution on $(a,b)$.
For general properties of the solutions $f_h$ see \cite{ChSh} or \cite{CGS}. Note that for general Borel-measurable $h$ it cannot be expected that there exists a solution $f$ which is differentiable on all of $(a,b)$ and satisfies (\ref{steineqdens}) pointwise. Thus, a solution is understood to be an almost everywhere differentiable and Borel-measurable function which satisfies (\ref{steineqdens}) at all points $x\in(a,b)$ where it is in fact differentiable and contrary to the usual convention, at the remaining points $x\in(a,b)$ one defines 
$f'(x):=-\psi(x)+h(x)-\mu(h)$. This yields a Borel-measurable function $f'$ on $(a,b)$ such that (\ref{steineqdens})
holds for each $x\in(a,b)$.\\
In order to understand the exchangeable pairs technique in the framework of the density approach it might be helpful to recall the exchangeable pairs method in the situation of normal approximation. This method, which was first presented in Stein's monograph \cite{St86}, is a cornerstone of Stein's method of normal approximation and is still the most frequently used coupling. This is due to the wide applicability of standard couplings like the \textit{Gibbs sampler} or making one time step in a reversible Markov chain, which generally yield exchangeable pairs. By definition, an \textit{exchangeable pair} is a pair $(W,W')$ of random variables, defined on a common probability space, such that their joint distribution is symmetric, i.e. such that $(W,W')\stackrel{\D}{=}(W',W)$. In \cite{St86}, in order to show that a given real-valued random variable $W$ is approximately standard normally distributed, Stein proposes the construction of another random variable $W'$, a small random perturbation of $W$, on the same space as $W$ such that $(W,W')$ forms an exchangeable pair and additionally the following \textit{linear regression property} holds:

\begin{equation} \label{linreg}
E[W'-W|W]=-\la W   
\end{equation}

Here, $\la\in(0,1)$ is a constant which is typically close to zero for conveniently chosen $W'$. If this condition is satisfied, then the distributional distance of $\calL(W)$ to $N(0,1)$ can be efficiently bounded in various metrics, including the Kolmogorov and Wasserstein metrics (see, e.g. \cite{St86}, \cite{CheShaSing} or \cite{CGS} for the common ``plug-in theorems'').\\
 The range of examples to which this technique could be applied was considerably extended by the work \cite{RiRo97} of Rinott and Rotar who proved normal approximation theorems allowing the linear regression property to be satisfied
only approximately. Specifically, they assumed the existence of a random quantity $R$, which is dominated by $\lambda W$ in size, such that 
\begin{equation}\label{linregr}
E[W'-W|W]=-\lambda W +R\,. 
\end{equation}

Note, that necessarily $R$ is $\sigma(W)$-measurable and that unlike condition (\ref{linreg}), condition (\ref{linregr}) is not a true condition on the pair $(W,W')$ since we can always define $R:=E[W'-W|W]+\lambda W$ for each given constant $\la>0$. However, the ``plug-in theorems'' in \cite{RiRo97}, \cite{ShSu} or \cite{CGS} clarify that $R$ has to be of smaller order than $\la$ in order to yield useful bounds. Since $W$ is supposed to have a ``true'' distributional limit, it follows that both, $\la$ and $R$, are at least asymptotically unique (see also the introduction of \cite{ReiRoe09} for the discussion of this topic).\\
When dealing with our possibly non-normal distribution $\mu$, the question is what condition to substitute for the linear regression property (\ref{linreg}) or (\ref{linregr}). This question was succesfully answered independently by Eichelsbacher and L\"{o}we in \cite{EiLo10} and by Chatterjee and Shao in \cite{ChSh}. They pointed out, that in this more general setting the appropriate regression property is  

\begin{equation}\label{regdens}
E[W'-W|W]=\la \psi(W) +R\,,
\end{equation}

where, again $\la>0$ is constant and $R$ is of smaller order than $\la \psi(W)$.
In order to give a flavour of the resulting ``plug-in theorems'', we present parts of Theorem 2.4 from \cite{EiLo10}.

\begin{theorem}\label{theoeilo}
Let the density $p$ be positive and absolutely continuous on $(a,b)$. Suppose that there exist positive real constants $c_1,c_2$ and $c_3$ such that for any Lipschitz-continuous function $h:\abquer\rightarrow\R$ with minimal Lipschitz constant $\fnorm{h'}$, the solution $f_h$ given by (\ref{steinsoldens}) satisfies

\[\fnorm{f_h}\leq c_1\fnorm{h}\,,\quad \fnorm{f_h'}\leq c_2\fnorm{h'}\quad\text{and}\quad\fnorm{f_h''}\leq c_3\fnorm{h'}\,,\]

where, for a function $f:\abquer\rightarrow\R$ we let $\fnorm{f}$ be its essential supremum norm.\\
Then, for any exchangeable pair $(W,W')$ satisfying condition (\ref{regdens}) we have 

\begin{eqnarray}\label{eilotheo}
&&|E[h(W)]-\mu(h)|\\
&\leq& \fnorm{h'}\Biggl(c_2E\biggl[\Bigl|1-\frac{1}{2\la}E\bigl[(W-W')^2|W\bigr]\Bigr|\biggr]+\frac{c_3}{4\la}E\bigl[|W'-W|^3\bigr]
+\frac{c_1}{\la}\sqrt{E[R^2]}\Biggr)\,.\nonumber 
\end{eqnarray}
\end{theorem}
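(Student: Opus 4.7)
The plan is to follow the standard exchangeable pairs blueprint, adapted to the density approach. Applying Stein's equation (\ref{steineqdens}) to the solution $f_h$ at $W$ and taking expectations gives
\[
E[h(W)]-\mu(h)=E\bigl[f_h'(W)\bigr]+E\bigl[\psi(W)f_h(W)\bigr].
\]
Since $f_h(W)$ is $\sigma(W)$-measurable, the regression property (\ref{regdens}) lets me replace $\la\psi(W)$ inside the second expectation by $E[W'-W\mid W]-R$, so that
\[
E\bigl[\psi(W)f_h(W)\bigr]=\tfrac{1}{\la}E\bigl[(W'-W)f_h(W)\bigr]-\tfrac{1}{\la}E\bigl[f_h(W)R\bigr].
\]

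I next antisymmetrize using exchangeability: $(W,W')\stackrel{\D}{=}(W',W)$ implies $E[(W'-W)f_h(W)]=-E[(W'-W)f_h(W')]$, hence
\[
E\bigl[(W'-W)f_h(W)\bigr]=\tfrac{1}{2}E\bigl[(W'-W)(f_h(W)-f_h(W'))\bigr].
\]
A first-order Taylor expansion with integral remainder gives $f_h(W')-f_h(W)=f_h'(W)(W'-W)+\eta$ with $|\eta|\le\tfrac{1}{2}\fnorm{f_h''}(W'-W)^2$ almost surely. Substituting produces
\[
E\bigl[(W'-W)f_h(W)\bigr]=-\tfrac{1}{2}E\bigl[f_h'(W)(W'-W)^2\bigr]+\eta',
\]
with $|\eta'|\le\tfrac{1}{4}\fnorm{f_h''}E|W'-W|^3$.

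Combining these identities, conditioning to write $E[f_h'(W)(W'-W)^2]=E\bigl[f_h'(W)\,E[(W'-W)^2\mid W]\bigr]$, and applying the triangle inequality together with Cauchy--Schwarz in the form $|E[f_h(W)R]|\le\fnorm{f_h}\sqrt{E[R^2]}$, yields
\begin{align*}
|E[h(W)]-\mu(h)|&\le\fnorm{f_h'}\,E\!\left[\left|1-\tfrac{1}{2\la}E[(W'-W)^2\mid W]\right|\right]\\
&\quad+\tfrac{\fnorm{f_h''}}{4\la}E|W'-W|^3+\tfrac{\fnorm{f_h}}{\la}\sqrt{E[R^2]}.
\end{align*}
Plugging in the hypothesized bounds on $\fnorm{f_h}$, $\fnorm{f_h'}$, $\fnorm{f_h''}$ in terms of $\fnorm{h'}$ (for the first of these, after absorbing an additive normalization of $h$, which leaves $f_h$ unchanged) produces (\ref{eilotheo}).

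The main technical obstacle is the Taylor step. Since $h$ is only Lipschitz, $f_h''$ is defined merely almost everywhere, so the remainder bound has to be justified through the integral representation $f_h(W')-f_h(W)-f_h'(W)(W'-W)=\int_W^{W'}(f_h'(t)-f_h'(W))\,dt$, using the convention introduced after (\ref{steinsoldens}) to make sense of $f_h'$ pointwise and to identify it with the absolutely continuous representative whose weak derivative coincides a.e.\ with $f_h''$ and is essentially bounded by $\fnorm{f_h''}$. Once this is in place, the rest of the argument is a routine execution of the exchangeable-pairs calculation sketched above.
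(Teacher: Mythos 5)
Your calculation is correct, but note that the paper does not actually prove Theorem~\ref{theoeilo}: it is quoted verbatim from \cite{EiLo10} as motivation for the new framework, so there is no in-paper proof of this exact statement to compare against. The closest analogue that the paper does prove is Proposition~\ref{genpluginprop1}, and it proceeds by a genuinely different route. You antisymmetrize $E[(W'-W)f_h(W)]$ using exchangeability and then Taylor-expand $f_h$ to second order, picking up the factor $|W'-W|$ a second time from the antisymmetrization and hence the constant $\tfrac{1}{4}$. The paper instead introduces the primitive $G(x)=\int_{x_0}^x f(y)\,dy$, applies the integral-form Taylor expansion
\[
G(W')-G(W)=f(W)(W'-W)+(W'-W)^2\int_0^1(1-s)f'\bigl(W+s(W'-W)\bigr)\,ds,
\]
and exploits only $E[G(W')]=E[G(W)]$. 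That choice buys two things: it requires merely $W\stackrel{\D}{=}W'$ rather than full exchangeability (a point the paper stresses, crediting \cite{Ro08}), and it yields the sharper coefficient $\tfrac{1}{6\la}$ on the cubic term since $\int_0^1 s(1-s)\,ds=\tfrac16$, replacing your $\tfrac{1}{4\la}$. Your version is the one that reproduces the $\tfrac14$ actually written in \eqref{eilotheo}, so it is the right blueprint for the quoted result; the paper's Proposition~\ref{genpluginprop1} is the modest refinement. One small wrinkle, independent of which route you take: the hypothesis gives $\fnorm{f_h}\leq c_1\fnorm{h}$, yet \eqref{eilotheo} places $c_1$ under the common prefactor $\fnorm{h'}$. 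Your remark about absorbing an additive normalization into $h$ (which leaves $f_h$ unchanged since $f_h=f_{h-\mu(h)}$) only closes this gap when $\abquer$ is bounded, and even then it silently rescales $c_1$ by the diameter of the support; the cleaner reading is that the hypothesis in \cite{EiLo10} is really $\fnorm{f_h}\leq c_1\fnorm{h'}$, and you should say so rather than lean on the normalization.
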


The third term of the bound (\ref{eilotheo}) reveals that, in fact, $R$ must be of smaller order than $\la$ in order for the bound to be useful and from the second term we conclude that $\la$ should such that $E\left[(W'-W)^2\right]\approx 2\la$.
The first term appearing in the bound on the right hand side of (\ref{eilotheo}) is usually interpreted such, that the random variable $ E\bigl[(W-W')^2|W\bigr]/2\la$ must ``obey a law of large numbers'' to obtain decreasing bounds.
Bounding this term is often decisive for the success of applying Theorem \ref{theoeilo}. \\

Having discussed the method of exchangeable pairs within the density approach, we now address the problem, that condition (\ref{regdens}) with negligible remainder term $R$ is in some examples not satisfied by an exchangeable pair, which, however, appears natural to us for our approximation problem. For example, in many situations where the exchangeable pair $(W,W')$ is constructed via the Gibbs sampler, we have a regression property of the form 

\begin{equation}\label{regmean}
E[W'-W|W]=\la\Bigl(-c\bigl(W-E[Z]\bigr)\Bigr)+R\,,
\end{equation}

where $\la, c>0$ are constants (the reason why $\la$ and $c$ are not subsumed into a single constant will become clear later on) and where, again, $R$ is a negligible remainder. Here, again $Z\sim\mu$. Following the theory of the paper \cite{ChSh}, condition (\ref{regmean}) suggests approximating $W$ with a normal distribution with mean $E[Z]$ and variance $\frac{1}{c}$. 
But there are situations, where the exchangeable pair $(W,W')$ is good, meaning that the difference $|W'-W|$ is ``small'', condition (\ref{regmean}) is satisfied and where we \textit{know} that $W$ is approximately distributed as a non-normal random variable $Z\sim\mu$ and so the normal approximation is inappropriate. In general, this either means that the above discussed law of large numbers cannot hold or that the resulting error term $R$ in (\ref{regdens}) is not negligible. 
These observations motivate a new version of Stein's method, that allows for a more general regression property.

Suppose, that an appropriately chosen exchangeable pair $(W,W')$ satisfies the following \textit{general regression property}:

\begin{equation}\label{genreg}
E[W'-W|W]=\la\ga(W)+R\,, 
\end{equation}

where $\la>0$ is constant, $\ga$ is a measurable function, whose domain contains $\abquer$ and will be discussed further below and where $R$ is a negligible remainder term. We will see, that it will be advantageous if the term $\ga(x)\cdot g(x)$ appears in the ``new'' Stein equation. So we make the following ansatz for the Stein identity:

\begin{equation}\label{gensteinid}
E[\eta(Z)g'(Z)+\ga(Z)g(Z)]=0\,,
\end{equation}

where $\eta$ is another function which still has to be found.

Starting from the Stein identity (\ref{gensteinid}) our aim is to identify the function $\eta$. If this approach is succesful, the  Stein equation corrersponding to a meaurable function $h$ will be

\begin{equation}\label{gensteineq}
\eta(x)g'(x)+\ga(x)g(x)=h(x)-\mu(h)\,.
\end{equation}

Let $f_h$ be the solution (\ref{steinsoldens}) to the equation (\ref{steineqdens}). For the solution $g_h$ of (\ref{gensteineq}) we make the ansatz $g_h(x)=\al(x)f_h(x)$ for some (sufficiently smooth) function $\al$. We will always let $\htilde:=h-\mu(h)$ and obtain

\begin{eqnarray}
&&\eta(x)g_h'(x)=\eta(x)\bigl(\al'(x)f_h(x)+\al(x)f_h'(x)\bigr)\nonumber\\ 
&=&\eta(x)\al'(x)f_h(x)+\eta(x)\al(x)\bigl(\htilde(x)-\eta(x)\psi(x)f_h(x)\bigr)\nonumber\\
&=&\eta(x)\al'(x)f_h(x)+\eta(x)\al(x)\htilde(x)-\eta(x)\psi(x)g_h(x)\label{gensteincal1}\\
&\stackrel{!}{=}&\htilde(x)-\ga(x)g_h(x)\nonumber\,.
\end{eqnarray}

For this identity to hold, irrespective of the test function $h$, it must be the case that $\al(x)=\frac{1}{\eta(x)}$ (particularly $\eta$ must be differentiable at least almost everywhere) and hence

\begin{equation}\label{ablal}
\al'(x)=\frac{-\eta'(x)}{\eta(x)^2}= -\frac{\eta'(x)}{\eta(x)}\al(x)\,.
\end{equation}

Plugging this into (\ref{gensteincal1}) we obtain 

\[\eta(x)g_h'(x)=-\eta'(x)g_h(x)-\eta(x)\psi(x)g_h(x)+\htilde(x)\,.\]

This equals $\htilde(x)-\ga(x)g_h(x)$ if and only if $\eta$ satisfies the ordinary differential equation 

\begin{equation}\label{etadgl}
\eta'(x)=\ga(x)-\psi(x)\eta(x)\,.
\end{equation}

This is a first order linear differential equation, which can of course be solved explicitly by the method of variation of the constant. It turns out, that the \textit{right} solution is given by 
\begin{equation}\label{etaformel}
\eta(x)=\frac{1}{p(x)}\int_a^x\ga(t)p(t)dt\,,
\end{equation}

at least, if $\int_a^b|\ga(t)|p(t)dt=E[|\ga(Z)|]<\infty$. But this is a very natural condition to hold, since the function $\ga$ was motivated by the regression property (\ref{genreg}) and so, if the random variables $W$ and $W'$ are integrable we obtain that both sides of (\ref{genreg}) must be $P$-integrable and, in fact

\[E[\la\ga(W)+R]=E\bigl[E[W'-W|W]\bigr]=E[W']-E[W]=0\,.\]

Neglecting the remainder term $R$ we thus see that $E[\ga(W)]$ should exist and, in fact, be close to zero. So since $W\stackrel{\D}{\approx}Z$ we find it reasonable that $E[\ga(Z)]$ exists and even equals zero. Furthermore, it is a matter of routine to check, that $\eta$ as given in (\ref{etaformel}) indeed still satisfies (\ref{etadgl}).    
The above calculations starting with (\ref{gensteinid}) were rather formal but crucial for the motivation and understanding of our approach. The paper is organized in the following way. Rigorous results and the abstract theory for general $\mu$ are presented in Section \ref{mainresultsgen}. These results are then further spezialized to the Beta distributions in Section \ref{mainresultsbeta}. In Section \ref{applications} the theory combined with a suitable exchangeable pairs coupling is used to prove a rate of convergence of order $\frac{1}{n}$ in a Polya urn model (see Theorem \ref{polyarate}). In Section \ref{appendix} some rather lengthy or technical proofs can be found and a sufficiently general version of de l' H\^{o}pital's rule for merely absolutely continuous functions is provided. This result justifies all the invocations of this famous rule in the present work.

\section*{Acknowledgements}
A few days after this work was on the arXiv, G. Reinert and L. Goldstein posted a preprint (see \cite{GolRei12}), which also develops Stein's method for the Beta distributions and uses a comparison technique to prove error bounds of order $n^{-1}$ in the Wasserstein distance for the more special Polya urn model, where the drawn ball is replaced to the urn together with only one extra ball of the same colour.

\section{The general theory}\label{mainresultsgen}
In this section we state the main results concerning bounds on the solution of the Stein equation (\ref{gensteineq}) and its first two derivatives and present a general ``plug-in theorem'' within a general exchangeable pairs approach.\\
In order to derive precise results, we have to impose conditions on the density $p$ of $\mu$ and on the coefficient $\ga$ from 
(\ref{gensteineq}), which was motivated by (\ref{genreg}). In applications it might be the case that a sequence of random variables of unbounded support converges in distribution to a random variable $Z$ with bounded support. Usually Stein's method for the distribution $\mu$ of $Z$ cannot handle such approximation problems, since the Stein equation and its solution are only defined on $(a,b)$ or $\abquer$. However, in many practical cases, the coefficients in Stein's equation are given by certain ``analytical expressions'', which also make sense on $\R\setminus(a,b)$ and hence, also the Stein solution can be defined there. In order to cover these situations, too, we will have to assume a number of conditions in this section. Those readers, who are only interested in approximation problems for random sequences, whose support is contained in $\abquer$ may restrict their attention to the relevant results. For reasons of readability most proofs are deferred to the appendix, only some short ones and the proof of Proposition \ref{genpluginprop1} are given in this section. \\
In the following we always let $Z$ be a real-valued random variable with distribution $\mu$. We will always assume the following conditions on the density $p$ and the coefficient $\ga$:

\begin{cond}\label{gencond1}
The density $p$ is positive on the interval $(a,b)$ and absolutely continuous on every compact interval $[c,d]\subseteq(a,b)$.
\end{cond}

\begin{cond}\label{gencond2}
The function $\ga:\abquer\rightarrow\R$ is such that 
\begin{enumerate}[(i)]
\item $\ga$ is continuous on $\abquer$
\item $\ga$ is strictly decreasing on $\abquer$
\item $\int_a^b |\ga(t)|p(t)dt<\infty$ and in fact $E[\ga(Z)]=\int_a^b\ga(t)p(t)dt=0$
\item There is a unique $x_0\in\abquer$ with $\ga(x_0)=0$.
\end{enumerate} 
\end{cond}

\begin{remark}\label{genrem1}
Note that in Condition \ref{gencond2} (iv) is actually implied by (i),(ii) and (iii) and the intermediate value theorem. Furthermore, (iv) implies that $\ga$ is positive on $(a,x_0)$ and is negative on $(x_0,b)$. 
\end{remark}

By item (iii) in Condition \ref{gencond2} we can define the function $I:\abquer\rightarrow\R$ by 
$I(x):=\int_a^x\ga(t)p(t)dt$ which is continuously differentiable on $\abquer$ if $\ga$ satisfies Condition \ref{gencond2}.

\begin{prop}\label{genprop1}
Under Conditions \ref{gencond1} and \ref{gencond2} the function $I$ has the following properties:
\begin{enumerate}[{\normalfont (a)}] 
\item $I(x)=-\int_x^b\ga(t)p(t)dt$
\item $I(x)>0$ for each $x\in(a,b)$
\item $I$ is strictly increasing on $(a,x_0)$ and strictly decreasing on $(x_0,b)$ and hence attains its global maximum at $x_0$.
\end{enumerate}
\end{prop}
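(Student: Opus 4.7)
The three assertions are fairly direct consequences of the sign behaviour of $\ga$ recorded in Remark~\ref{genrem1}, together with Condition~\ref{gencond2} (iii) and the fundamental theorem of calculus. My plan is to dispatch (a), then use (a) together with the sign information on $\ga$ to obtain (b), and finally derive (c) by differentiating $I$.

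For part (a), I would simply split
\[
0 \;=\; \int_a^b \ga(t)p(t)\,dt \;=\; \int_a^x \ga(t)p(t)\,dt + \int_x^b \ga(t)p(t)\,dt \;=\; I(x) + \int_x^b \ga(t)p(t)\,dt,
\]
which is legitimate because Condition~\ref{gencond2} (iii) makes $\ga p$ absolutely integrable over $(a,b)$, and then rearrange. The only housekeeping point is to note that since $\ga$ is strictly decreasing, continuous and has vanishing $\mu$-integral, its unique zero $x_0$ must lie in the open interval $(a,b)$: otherwise, if $x_0$ coincided with an endpoint, Remark~\ref{genrem1} would force $\ga$ to have constant sign on $(a,b)$, contradicting $E[\ga(Z)]=0$.

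For part (b), I would treat the two cases $x\in(a,x_0]$ and $x\in[x_0,b)$ separately. In the first case, on $(a,x)$ we have $\ga>0$ by Remark~\ref{genrem1} (and $p>0$ by Condition~\ref{gencond1}), so the integrand defining $I(x)$ is strictly positive on a set of positive Lebesgue measure, giving $I(x)>0$. In the second case, I would instead invoke the representation from part (a): on $(x,b)$ one has $\ga<0$ and $p>0$, hence $\int_x^b \ga(t)p(t)\,dt<0$, so $I(x)=-\int_x^b \ga(t)p(t)\,dt>0$.

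For part (c), Condition~\ref{gencond1} implies that $p$ is continuous on $(a,b)$ (absolute continuity on each compact subinterval yields continuity), and $\ga$ is continuous by Condition~\ref{gencond2} (i); hence $\ga\cdot p$ is continuous on $(a,b)$, and the fundamental theorem of calculus gives $I'(x)=\ga(x)p(x)$ for every $x\in(a,b)$. By Remark~\ref{genrem1} this derivative is strictly positive on $(a,x_0)$ and strictly negative on $(x_0,b)$, so $I$ is strictly increasing on $(a,x_0)$, strictly decreasing on $(x_0,b)$, and $x_0$ is its unique global maximiser. There is no real obstacle in the argument — the only slightly delicate point is the remark above that $x_0$ necessarily lies in the interior, without which the statement of (c) would be vacuous on one side.
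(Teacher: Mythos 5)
Your proof is correct and follows essentially the same route as the paper: (a) by splitting the integral using Condition~\ref{gencond2}\,(iii), and (c) by the fundamental theorem of calculus plus the sign of $\ga$. The only cosmetic difference is in (b), where the paper deduces positivity from the endpoint limits $\lim_{x\searrow a}I(x)=0=\lim_{x\nearrow b}I(x)$ combined with the monotonicity in (c), whereas you argue directly from the sign of the integrand on $(a,x)$ respectively $(x,b)$; both arguments are elementary, and your added remark that $x_0$ lies strictly inside $(a,b)$ is a valid (if implicit in the paper) housekeeping point.
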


\begin{proof}
Of course, (a) is immediately implied by item (iii) from Condition \ref{gencond2}.\\
To prove (b) and (c) first observe that by (iii) we have $\lim_{x\searrow a} I(x)=0=\lim_{x\nearrow b} I(x)$. Furthermore,  
$I'(x)=\ga(x)p(x)$ is postive on $(a,x_0)$ and negative on $(x_0,b)$ implying the results.
\end{proof}

We begin developing Stein's method for the distribution $\mu$ satisfying Condition \ref{gencond1} and the coefficient $\ga$ of the unknown function $g$ in (\ref{gensteineq}), which we assume to fulfill Condition \ref{gencond2}. We \textit{define} the function $\eta:(a,b)\rightarrow\R$ by (\ref{etaformel}) and for a given Borel-measurable test function $h$ with 
$E[|h(Z)|]<\infty$ we consider the \textit{Stein equation} (\ref{gensteineq}). In most cases of practical interest we will have $\lim_{x\searrow a}\eta(x)=0$ if $a>-\infty$ so that we can define $\eta(a)=0$ to obtain a function which is continuous at $a$. The same remark holds for $b$ (see also Proposition \ref{genprop2} down below and the discussion following it). Note that we can also write $\eta(x)=\frac{I(x)}{p(x)}$ and so we can infer several properties of $\eta$ from those of the function $I$.

\begin{prop}\label{genprop2}
Under Conditions \ref{gencond1} and \ref{gencond2} the function $\eta$ has the following properties:
\begin{enumerate}[{\normalfont (a)}] 
\item $\eta$ is positive on $(a,b)$, absolutely continuous on every compact subinterval $[c,d]\subseteq(a,b)$ and 
$\eta'(x)=\ga(x)-\psi(x)\eta(x)$ for $\la$-almost all $x\in(a,b)$.
\item $\lim_{x\searrow a}\eta(x) p(x)=\lim_{x\nearrow b}\eta(x)p(x)=0$
\item If $\lim_{x\searrow a}p(x)=0$, then 
$\lim_{x\searrow a}\eta(x)=\frac{\ga(a)}{\lim_{x\searrow a}\psi(x)}$, if this limit exists. %\in(0,\infty)\cup\{\infty\}$
\item If $\liminf_{x\searrow a}p(x)\in(0,\infty)\cup\{\infty\}$ then $\lim_{x\searrow a}\eta(x)=0$
\item If $\lim_{x\nearrow b}p(x)=0$, then 
$\lim_{x\nearrow b}\eta(x)=\frac{\ga(b)}{\lim_{x\nearrow b}\psi(x)}$, if this limit exists.% \in(0,\infty)\cup\{\infty\}$
\item If $\liminf_{x\nearrow b}p(x)\in(0,\infty)\cup\{\infty\}$ then $\lim_{x\nearrow b}\eta(x)=0$

\end{enumerate}
\end{prop}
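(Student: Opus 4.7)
The plan is to exploit the factorization $\eta(x)=I(x)/p(x)$ so that every assertion reduces either to a property of the already-understood function $I$, together with elementary facts about $p$, or to a single invocation of the generalized de l'H\^{o}pital's rule supplied in the appendix.

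For part (a), since $p$ is positive and absolutely continuous on every compact $[c,d]\subseteq(a,b)$, it is bounded below there by a strictly positive constant, so $1/p$ is also absolutely continuous on $[c,d]$. The function $I$ was already noted to be continuously differentiable on $\abquer$ with $I'(x)=\ga(x)p(x)$, hence the product $\eta=I\cdot(1/p)$ is absolutely continuous on every such $[c,d]$. Positivity is immediate from Proposition \ref{genprop1}(b), and the product rule applied at any point where $p$ is differentiable gives
\[
\eta'(x)=\frac{I'(x)}{p(x)}-\frac{I(x)p'(x)}{p(x)^2}=\ga(x)-\eta(x)\psi(x),
\]
which is valid $\la$-a.e. by absolute continuity of $p$ on compact subintervals.

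Part (b) is immediate from $\eta(x)p(x)=I(x)$, combined with $\lim_{x\searrow a}I(x)=0$ (by the very definition of $I$) and $\lim_{x\nearrow b}I(x)=0$ (from the alternative representation in Proposition \ref{genprop1}(a), which in turn rests on the integrability assumption in Condition \ref{gencond2}(iii)). Part (d) is also quick: if $\liminf_{x\searrow a}p(x)>0$, then $p$ is bounded below by some $\delta>0$ in a right-neighbourhood of $a$, and since $I(x)\to 0$ we get $|\eta(x)|=|I(x)|/p(x)\le|I(x)|/\delta\to 0$. Part (f) is handled symmetrically at the right endpoint.

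The substantive content lies in parts (c) and (e), which are the cases where numerator and denominator both vanish and a genuine indeterminate form arises. For (c) I would apply the generalized l'H\^{o}pital's rule from the appendix to the quotient $I/p$ at the left endpoint: both functions are absolutely continuous on compact subintervals of $(a,b)$, both tend to $0$ at $a$, and the derivative ratio is
\[
\frac{I'(x)}{p'(x)}=\frac{\ga(x)p(x)}{p'(x)}=\frac{\ga(x)}{\psi(x)},
\]
which, by continuity of $\ga$ at $a$ (Condition \ref{gencond2}(i)) and the hypothesis that $\lim_{x\searrow a}\psi(x)$ exists, tends to $\ga(a)/\lim_{x\searrow a}\psi(x)$. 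Part (e) is the mirror image at $b$, using the second representation of $I$ from Proposition \ref{genprop1}(a). The main obstacle is precisely this l'H\^{o}pital step, since $p$ is only assumed absolutely continuous rather than classically differentiable; this is exactly what the appendix version is designed to handle, and the only care needed is to check that its hypotheses (typically a monotonicity-near-the-endpoint or non-vanishing-derivative assumption) are met, which follows from $p>0$ together with $p(x)\to 0$ at the endpoint under consideration.
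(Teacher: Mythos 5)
Your proof follows the same route as the paper's: write $\eta=I/p$, obtain absolute continuity and the ODE in (a) from absolute continuity of $I$ and of $1/p$ on compact subintervals, deduce (b), (d), (f) directly from $I\to 0$ at the endpoints, and settle the indeterminate cases (c), (e) by the generalized de l'H\^{o}pital's rule applied to $I/p$. The only departure is your closing remark that the l'H\^{o}pital hypothesis (a.e.\ sign of $p'$ near the endpoint) "follows from $p>0$ together with $p(x)\to 0$"; that inference is not quite automatic as stated (one really needs the existence of $\lim\psi$ to pin down the sign of $p'$ near the endpoint), though the paper itself is equally terse on this point.
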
 

\begin{proof}
The first part of (a) follows from the fact, that $I$ is positive on $(a,b)$ and $C^1$ on $\abquer$ and hence absolutely continuous on $[c,d]$ and that $p$ is also absolutely continuous and  bounded below by a positive constant on $[c,d]$. The rest of (a) has already been observed.  Items (b), (d) and (f) follow immediately from the properties of the function $I$ in Proposition \ref{genprop1}. To prove (c), we use de l'H\^{o}pital's rule (see Theorem \ref{Hospital}) to derive

\[\lim_{x \searrow a}\eta(x)=\lim_{x \searrow a}\frac{I(x)}{p(x)}=\lim_{x \searrow a}\frac{\ga(x)p(x)}{p'(x)}
=\frac{\ga(a)}{\lim_{x \searrow a}\psi(x)}\,.\]

In a similar way one can prove (e).
\end{proof}

The following ``Mill's ratio'' condition on the density $p$ and the corresponding distribution function $F$ is often satisfied and will yield $\lim_{x\searrow a}\eta(x)=\lim_{x\nearrow b}\eta(x)=0$.

\begin{cond}\label{condboundsupp}
The density $p$ of $\mu$ satisfies all the properties from Condition \ref{gencond1} and also the following:
\begin{enumerate}[(i)]
\item If $a>-\infty$, then $\lim_{x\searrow a}\frac{F(x)}{p(x)}=0$.
\item If $b<\infty$, then $\lim_{x\nearrow b}\frac{1-F(x)}{p(x)}=0$.
\end{enumerate}
\end{cond}

\begin{remark}\label{remcondboundsupp}
\begin{enumerate}[(a)]
\item Condition \ref{condboundsupp} is always satisfied if the density $p$ is bounded away from zero in suitable neighbourhoods of $a$ and $b$.
\item Assume that both, $a>-\infty$ and $b<\infty$ and that $\lim_{x\searrow a}p(x)=\lim_{x\nearrow b}p(x)=0$. Then Condition \ref{condboundsupp} is satisfied, if there is a $\delta>0$ such that $p$ is increasing on $(a,a+\delta)$ and 
decreasing on $(b-\delta,b)$. This is easily seen by the inequality

\[F(x)=\int_a^x p(t)dt\leq p(x)(x-a)\,,\]

valid for $x\in(a,a+\delta)$ and a similar one for the right end point $b$.

\item Suppose that $a>-\infty$ and $b<\infty$, that $\lim_{x\searrow a}p(x)=\lim_{x\nearrow b}p(x)=0$ and that there is a $\delta>0$ such that $p$ is convex on $(a,a+\delta)$ and on $(b-\delta,b)$. Then the assumptions of (b) and hence Condition  
\ref{condboundsupp} is satisfied. 
In fact, first we can extend $p$ to a continuous and convex function on $[a,b)$ 
by setting $p(a):=0$. Now, let $a<x<y<a+\delta$. Then, there exists a $\la\in(0,1)$ with $x=\la a+(1-\la)y$ and by convexity we have:

\begin{eqnarray*}
p(y)-p(x)&=&p(y)-p\bigl(\la a+(1-\la)y\bigr)\geq p(y)-\la p(a)-(1-\la)p(y)\\
&=&\la p(y)>0
\end{eqnarray*}

Thus, $p$ is strictly increasing on $(a,a+\delta)$. Similarly, one shows, that $p$ is strictly decreasing on $(b-\delta,b)$, if $p$ is convex there. 

\item If $p$ is analytic at $a$ and $b$, then Condition \ref{condboundsupp} is also satisfied. Indeed, if there is an 
$r>0$ such that $p(x)=\sum_{k=0}^\infty c_k (x-a)^k$ for all $x\in(a,a+r)$, then the function $f:(a-r,a+r)\rightarrow\R$ with $f(x):=\sum_{k=0}^\infty c_k (x-a)^k$ is well-defined. Let $n_0:=\min\{k\geq0\:c_k\not=0\}$. Then $n_0<\infty$ since 
$\supp\bigl(\mu\bigr)=\abquer$. If $n_0=0$ and hence $f(a)=c_0=\lim_{x\searrow a}p(x)\not=0$, then there is nothing to show. Otherwise, we have $p(x)=(x-a)^{n_0}\sum_{k=n_0}^\infty c_k(x-a)^{k-n_0}$ and 
$p'(x)=(x-a)^{n_0-1}\sum_{k=n_0}^\infty kc_k (x-a)^{k-n_0}$ for $x\in(a,a+r)$ and hence, by de l'H\^{o}pital's rule

\begin{eqnarray*}
\lim_{x\searrow a}\frac{F(x)}{p(x)}&=&\lim_{x\searrow a}\frac{p(x)}{p'(x)}
=\lim_{x\searrow a}(x-a)\frac{\sum_{k=n_0}^\infty c_k(x-a)^{k-n_0}}{\sum_{k=n_0}^\infty kc_k (x-a)^{k-n_0}}\\
&=&\frac{c_{n_0}}{n_0c_{n_0}}\lim_{x\searrow a}(x-a)=0\,.
\end{eqnarray*}
\qed
 
\end{enumerate}
\end{remark}

The following proposition provides the announced result.

\begin{prop}\label{etaprop}
Assume Condition \ref{condboundsupp}. Then the function $\eta$ vanishes at the finite end points of the support $\abquer$ of $\mu$, i.e. if $a>-\infty$, then $\lim_{x\searrow a}\eta(x)=0$ and if $b<\infty$, then $\lim_{x\nearrow b}\eta(x)=0$.
Hence, we may extend $\eta$ to a continuous function on $\abquer$ by letting $\eta(a):=\eta(b):=0$.
\end{prop}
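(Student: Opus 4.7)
The plan is to exploit the two equivalent integral representations of $\eta$ established earlier, namely
\[
\eta(x)=\frac{1}{p(x)}\int_a^x\ga(t)p(t)dt=-\frac{1}{p(x)}\int_x^b\ga(t)p(t)dt,
\]
together with the boundedness of $\ga$ near the finite endpoints (which follows from the continuity of $\ga$ on the closed interval $\abquer$ assumed in Condition \ref{gencond2}), and then invoke the Mill's-ratio-type Condition \ref{condboundsupp} to finish.

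For the left endpoint, assume $a>-\infty$. By continuity of $\ga$ on $\abquer$, there exist $\delta>0$ and a constant $M>0$ such that $|\ga(t)|\leq M$ for all $t\in[a,a+\delta]$. Since $a$ is the left endpoint of $\supp(\mu)$, we have $F(a)=0$, and for $x\in(a,a+\delta)$ the first representation gives
\[
|\eta(x)|\leq\frac{1}{p(x)}\int_a^x|\ga(t)|p(t)dt\leq M\cdot\frac{F(x)}{p(x)}.
\]
By Condition \ref{condboundsupp} (i), the right-hand side tends to $0$ as $x\searrow a$, so $\lim_{x\searrow a}\eta(x)=0$, and hence the extension $\eta(a):=0$ is continuous at $a$.

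For the right endpoint, assume $b<\infty$. Again by continuity of $\ga$ on $\abquer$, choose $\delta>0$ and $M'>0$ with $|\ga(t)|\leq M'$ on $[b-\delta,b]$. Using the second representation of $\eta$, for $x\in(b-\delta,b)$,
\[
|\eta(x)|\leq\frac{1}{p(x)}\int_x^b|\ga(t)|p(t)dt\leq M'\cdot\frac{1-F(x)}{p(x)},
\]
and the conclusion $\lim_{x\nearrow b}\eta(x)=0$ follows from Condition \ref{condboundsupp} (ii), justifying the extension $\eta(b):=0$.

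There is no real obstacle here: the whole proof is essentially a two-line bound in each case. The only thing worth being slightly careful about is to use the \emph{appropriate} representation of $\eta$ at each endpoint (so that the integral has vanishing length as $x$ approaches that endpoint) and to remember that $F(a)=0$ and $1-F(b)=0$ because $a,b$ are the endpoints of the support of $\mu$; once the correct representation is chosen, the Mill's-ratio condition does the work immediately.
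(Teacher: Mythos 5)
Your proof is correct and follows essentially the same route as the paper: use the appropriate one of the two integral representations of $\eta$ at each endpoint to bound $\eta(x)$ by a constant times the Mill's-ratio quantity, then apply Condition \ref{condboundsupp}. The only cosmetic difference is that you obtain your constant $M$ from continuity of $\ga$ on a neighborhood of the endpoint, whereas the paper uses the monotonicity of $\ga$ (and the positivity of $\ga$ near $a$, from Remark \ref{genrem1}) to bound $\ga(t)<\ga(a)$ directly; both give the same estimate.
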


\begin{proof}
Suppose, that $a>-\infty$. Then, by the positivity of $I$ and the monotonicity of $\ga$, for $a<x<b$:

\[0<I(x)=\int_a^x\ga(t)p(t)dt<\ga(a)\int_a^x p(t)dt=\ga(a)F(x)\]

Hence, 

\begin{eqnarray*}
0\leq\liminf_{x\searrow a}\eta(x)\leq\limsup_{x\searrow a}\eta(x)=\limsup_{x\searrow a}\frac{I(x)}{p(x)}
\leq\ga(a)\lim_{x\searrow a}\frac{F(x)}{p(x)}=0\,,
\end{eqnarray*}

so that $\lim_{x\searrow a}\eta(x)=0$.
The proof of $\lim_{x\nearrow b}\eta(x)=0$ for finite $b$ is similar by using the representation $I(x)=-\int_x^b \ga(t)p(t)dt$ and is therefore omitted.  
\end{proof}

From our above heuristic calculations we know that the function $g_h:(a,b)\rightarrow\R$ with 

\begin{eqnarray}\label{gensteinsol}
g_h(x)&:=&\frac{1}{p(x)\eta(x)}\int_a^x(h(t)-\mu(h))p(t)dt\nonumber\\
&=&-\frac{1}{p(x)\eta(x)}\int_x^b(h(t)-\mu(h))p(t)dt
\end{eqnarray}

solves the Stein equation (\ref{gensteineq}) for $x\in(a,b)$. This can also be proved directly by differentiation and the formula for $g_h$ could also be derived by the method of variation of the constant using the fact that $\log(p\cdot\eta)$ is a primitive function of $\frac{\ga}{\eta}$, which follows from (\ref{etadgl}). If we can show that $g_h$ is bounded, then it will immediately follow from Proposition \ref{genprop2} (a) that $g_h$ is the only bounded solution of 
(\ref{gensteineq}), since the solutions of the corresponding homogeneous equation are constant multiples of $\frac{1}{p\cdot\eta}$.\\
Since we do not exclude approximating random variables which take on the values $a$ or $b$, we show that the solution $g_h$ can be extended continuously to $a$ and $b$, if $h$ is continuous there.
By the properties of the function $I=p\eta $ on $(a,b)$,  from Proposition \ref{genprop2}, the continuity of $\ga$ and by de l'H\^{o}pital's rule (see Theorem \ref{Hospital}) we have 

\begin{equation}\label{conta1}
\lim_{x\searrow a}g_h(x)= \lim_{x\searrow a}\frac{\int_a^x\htilde(t)p(t)dt}{I(x)}
=\lim_{x\searrow a}\frac{\htilde(x)p(x)}{\ga(x)p(x)}=\lim_{x\searrow a}\frac{\htilde(x)}{\ga(x)}
=\frac{\htilde(a)}{\ga(a)}
\end{equation}

and, similarly, $\lim_{x\nearrow b}g_h(x)=\frac{\htilde(b)}{\ga(b)}$, if $h$ is continuous at $a$ and $b$.
Thus, we can conclude the following proposition.

\begin{prop}\label{propcont}
Assume Conditions \ref{gencond1} and \ref{gencond2} and let $h:\R\rightarrow\R$ be a Borel-measurable test function with 
$E\bigl[\abs{h(Z)}\bigr]<\infty$ and being continuous at $a$ and $b$. Then the Stein solution $g_h$ as defined above may be extended to a continuous function on $\R$ by letting 

\[g_h(a):=\frac{h(a)-\mu(h)}{\ga(a)}\quad\text{and}\quad g_h(b):=\frac{h(b)-\mu(h)}{\ga(b)}\,.\]

\end{prop}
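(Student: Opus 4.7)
The plan is to reduce the boundary continuity to the indeterminate-form calculation displayed in equation (\ref{conta1}), and then to justify every step in that calculation by invoking the version of de l'H\^{o}pital's rule stated in Theorem \ref{Hospital} of the appendix (which is designed precisely for absolutely continuous functions with merely a.e.-defined derivatives). Continuity of $g_h$ on the open interval $(a,b)$ is immediate from the explicit formula (\ref{gensteinsol}): the numerator is absolutely continuous in $x$, while $p$ and $\eta$ are continuous and strictly positive on $(a,b)$ by Condition \ref{gencond1} and Proposition \ref{genprop2}(a). Thus the only real work is at the finite endpoints, and extending beyond $\abquer$ to all of $\R$ can be achieved by constant prolongation.

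For the limit at $a$ (assuming $a$ finite), I write $g_h(x)=J(x)/I(x)$ with
\[
J(x):=\int_a^x\htilde(t)p(t)\,dt\quad\text{and}\quad I(x)=p(x)\eta(x)=\int_a^x\ga(t)p(t)\,dt,
\]
the second equality following from the definition of $\eta$. Both $J$ and $I$ are absolutely continuous on compact subintervals of $\abquer$ containing $a$: $I$ because $\ga p$ is integrable by Condition \ref{gencond2}(iii), and $J$ because $\htilde p=hp-\mu(h)p$ is integrable by the assumption $E[|h(Z)|]<\infty$. Moreover $J(a)=0$ trivially and $I(a)=0$ by Proposition \ref{genprop1}. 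Theorem \ref{Hospital} therefore applies and yields
\[
\lim_{x\searrow a}\frac{J(x)}{I(x)}=\lim_{x\searrow a}\frac{J'(x)}{I'(x)}=\lim_{x\searrow a}\frac{\htilde(x)p(x)}{\ga(x)p(x)}=\lim_{x\searrow a}\frac{\htilde(x)}{\ga(x)},
\]
provided the final limit exists; and it does, because $h$ is assumed continuous at $a$, $\ga$ is continuous on $\abquer$, and $\ga(a)\neq 0$. The last nonvanishing is a small side observation: since $\ga$ is strictly decreasing on $\abquer$ with $\int_a^b\ga(t)p(t)\,dt=0$, the function $\ga$ must change sign in the interior, forcing $\ga(a)>0$ and, symmetrically, $\ga(b)<0$. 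Hence the limit equals $\htilde(a)/\ga(a)=(h(a)-\mu(h))/\ga(a)$, which is exactly the value assigned to $g_h(a)$ in the statement.

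The argument at $b$ is entirely analogous, starting from the alternative representation $g_h(x)=-p(x)^{-1}\eta(x)^{-1}\int_x^b\htilde(t)p(t)\,dt$ and using $I(x)=-\int_x^b\ga(t)p(t)\,dt$ from Proposition \ref{genprop1}(a); both numerator and denominator again tend to $0$ as $x\nearrow b$, and the same l'H\^{o}pital argument gives the limit $\htilde(b)/\ga(b)$. Finally, since $g_h$ is continuous on the closed interval $\abquer$ with the boundary values thus defined, it extends trivially to a continuous function on $\R$ by, e.g., constant prolongation beyond $a$ and $b$ (when finite).

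The one step that is not purely routine is the application of l'H\^{o}pital's rule: because $h$ is only Borel measurable, the fundamental-theorem identity $J'(x)=\htilde(x)p(x)$ holds only almost everywhere, so the classical version of l'H\^{o}pital is not immediately applicable. This is the principal obstacle, and it is exactly what the appendix result Theorem \ref{Hospital} is designed to overcome; once that general form is in place, the present proposition follows directly from the display above.
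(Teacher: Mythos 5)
Your proof is correct and follows exactly the route the paper takes: writing $g_h = J/I$, noting both integrals vanish as $x\searrow a$ (resp.\ $x\nearrow b$), and applying the generalized l'H\^{o}pital rule of Theorem \ref{Hospital} together with the continuity of $h$ and $\ga$ at the finite endpoints. You supply a bit more bookkeeping than the paper (checking absolute continuity of $J$ and $I$, verifying $\ga(a)>0$, $\ga(b)<0$ from the sign change forced by $\int_a^b\ga\,p=0$), but these are exactly the facts implicitly used in display (\ref{conta1}) and Remark \ref{genrem1}, so the argument is the same in substance.
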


As is typical for Stein's method, its success within the applications considerably depends on good bounds on the solutions $g_h$ and their derivative(s), generally uniformly over some given class $\calH$ of test functions $h$.\\
The next step will be to prove such bounds. It has to be mentioned that we cannot expect to derive concrete good bounds in full generality, but that sometimes further conditions have to be imposed either on the distribution $\mu$ (e.g. through the density $p$) or on the coefficient $\ga$. Nevertheless, we will derive bounds involving functional expressions which can a posteriori be simplified, computed or further bounded for concrete distributions. So our abstract viewpoint will pay off. Moreover, some of our bounds will actually hold in complete generality.\\

The next Proposition contains a bound for the solutions $g_h$ for bounded and Borel-measurable test functions $h$.

\begin{prop}\label{genprop3}
Assume Conditions \ref{gencond1} and \ref{gencond2} and let $m$ be a median for $\mu$. Then, for $h:\abquer\rightarrow\R$ Borel-measurable and bounded we have

\begin{equation}\label{genboundbounded}
 \fnorm{g_h}\leq\frac{\fnorm{h-\mu(h)}}{2I(m)}=\frac{\fnorm{h-\mu(h)}}{2\int_a^{m}\ga(t)p(t)dt}
\end{equation}
 
\end{prop}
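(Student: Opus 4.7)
The plan is to work from the explicit formula $g_h(x) = I(x)^{-1}\int_a^x \htilde(t) p(t)dt$, where $\htilde = h - \mu(h)$ and $I = p\cdot\eta$, and to split the desired estimate into a purely distributional piece and a purely geometric piece involving $I$.

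First, I would exploit that $\int_a^b \htilde(t)p(t)dt = 0$, so that the numerator admits the two representations
\[\int_a^x\htilde p\,dt = -\int_x^b \htilde p\,dt.\]
The bound $\fnorm{\htilde}$ on $\htilde$ then yields
\[\Bigl|\int_a^x \htilde(t)p(t)dt\Bigr| \leq \fnorm{\htilde}\min\bigl(F(x), 1-F(x)\bigr),\]
so that it suffices to prove the purely geometric inequality
\[\frac{\min\bigl(F(x),1-F(x)\bigr)}{I(x)} \leq \frac{1}{2I(m)}\qquad\text{for all } x\in(a,b).\tag{$\ast$}\]
Since $\mu$ is absolutely continuous, $F$ is continuous, hence $F(m) = 1/2$, and both sides of $(\ast)$ coincide at $x = m$.

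To establish $(\ast)$ for $x \leq m$, I would introduce the auxiliary function $G(x) := I(x) - 2I(m) F(x)$. Then $G(a)=0=G(m)$ and
\[G'(x) = p(x)\bigl(\ga(x) - 2I(m)\bigr),\]
so, since $\ga$ is strictly decreasing, $G$ is increasing up to the unique point $x_*$ with $\ga(x_*)=2I(m)$ and decreasing thereafter. The key sublemma is that $x_*\leq m$, equivalently $\ga(m)\leq 2I(m)$. This follows from a simple monotonicity argument: if $\ga(m)\geq 0$, then $I(m) \geq \ga(m) F(m) = \ga(m)/2$ by decreasingness of $\ga$ on $(a,m)$; while if $\ga(m) < 0$, the inequality is trivial. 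Combined with $G(a)=G(m)=0$ and the shape of $G$, this forces $G\geq 0$ on $(a,m)$, i.e.\ $F(x)/I(x) \leq 1/(2I(m))$ on that interval.

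The case $x \geq m$ is entirely symmetric: with $\tilde G(x) := I(x) - 2I(m)(1-F(x))$, one has $\tilde G(m)=\tilde G(b)=0$ and $\tilde G'(x) = p(x)(\ga(x) + 2I(m))$, and the required inequality $\ga(m)\geq -2I(m)$ is obtained from the alternative representation $I(m) = \int_m^b (-\ga(t)) p(t)dt \geq (-\ga(m))(1-F(m)) = -\ga(m)/2$ when $\ga(m)\leq 0$ (and is trivial otherwise). Assembling these three ingredients gives $\fnorm{g_h} \leq \fnorm{\htilde}/(2I(m))$, and the alternative expression $I(m) = \int_a^m \ga(t)p(t)dt$ appears directly in the claimed formula. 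The main obstacle I anticipate is precisely the sublemma $2I(m) \geq |\ga(m)|$, which is the hinge linking the monotonicity of $\ga$ to the median condition; everything else is then a routine analysis of the sign of $G$ and $\tilde G$.
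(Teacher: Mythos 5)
Your proof is correct, and it reaches the same bound by a genuinely different mechanism than the paper's. Both arguments begin identically: you reduce to the purely geometric inequality $\min\bigl(F(x),1-F(x)\bigr)/I(x)\leq 1/(2I(m))$ using the two representations $\int_a^x\htilde p = -\int_x^b\htilde p$. But from there the paper takes a monotonicity route: it sets $M(x):=F(x)/I(x)$, computes $M'(x)=p(x)\bigl(I(x)-\ga(x)F(x)\bigr)/I(x)^2$, observes that $I(x)>\ga(x)F(x)$ for \emph{every} $x\in(a,b)$ (by the strict monotonicity of $\ga$), and then simply evaluates the increasing function $M$ at the endpoint $m$ of $(a,m]$; the case $x>m$ is dispatched symmetrically with $N(x):=(1-F(x))/I(x)$ shown decreasing. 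You instead run a boundary-value argument on the comparison function $G(x)=I(x)-2I(m)F(x)$: you note $G(a)=G(m)=0$, that $G'$ has the same sign as $\ga(x)-2I(m)$ (hence changes sign at most once, from $+$ to $-$), and conclude $G\geq 0$ on $(a,m)$ from this hump shape. Both strategies hinge on the same ingredient — the strict monotonicity of $\ga$ — but the paper packages it as $I>\ga F$ pointwise, while you package it as a single sign change of $G'$. The paper's route is slightly more direct (and actually proves the stronger statement that $M$ is globally strictly increasing, not just that $M\leq M(m)$ on $(a,m]$). One small remark on your write-up: the sublemma $\ga(m)\leq 2I(m)$, which you identify as the hinge and prove by hand, is in fact automatic from the setup — if $\ga(m)>2I(m)$ then $\ga>2I(m)$ on all of $(a,m)$, so $G$ would be strictly increasing there, contradicting $G(a)=G(m)=0$. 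So your explicit verification is a useful sanity check but not logically indispensable. You should also make the existence of $x_*$ explicit (it follows by the same contradiction argument if $\ga-2I(m)$ never vanishes on $(a,m)$); as written this is left slightly implicit.
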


The proof is deferred to the appendix.
The following corollary specializes this result to the case that $\ga(x)=-c(x-E[Z])$ and that $\mu$ is symmetric with respect to its median, which is then equal to its expected value $E[Z]$, that is $Z-m\stackrel{\D}{=}m-Z$.

\begin{cor}\label{gencor1}
In addition to Conditions \ref{gencond1} and \ref{gencond2} assume that the distribution $\mu$ is symmetric with respect to $m=E[Z]$ and that 
$\ga(x)=-c(x-E[Z])$ for some positive constant $c$. Then for each bounded and  Borel-measurable test function $h:\abquer\rightarrow\R$ we have

\begin{equation}\label{boundboundedsym}
\fnorm{g_h}\leq\frac{\fnorm{h-\mu(h)}}{cE[\abs{Z-m}]}
\end{equation} 
\end{cor}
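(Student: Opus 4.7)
The plan is to apply Proposition \ref{genprop3} with this particular choice of $\gamma$ and simplify the resulting denominator $2I(m)$ into the quantity $cE[|Z-m|]$, where the simplification is driven entirely by the symmetry assumption.

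First I would observe that because $\mu$ is symmetric with respect to $m$ and $m=E[Z]$, the point $m$ is indeed a median of $\mu$, so it is a legitimate choice in Proposition \ref{genprop3}. Moreover, $m$ coincides with the unique zero $x_0$ of $\gamma(x)=-c(x-E[Z])$, which is consistent with Condition \ref{gencond2}. Thus the bound from Proposition \ref{genprop3} applies with this particular $m$, giving
\[
\fnorm{g_h}\leq\frac{\fnorm{h-\mu(h)}}{2\int_a^m\ga(t)p(t)dt}=\frac{\fnorm{h-\mu(h)}}{2c\int_a^m(m-t)p(t)dt}.
\]

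Next I would express the integral in the denominator in terms of $E[|Z-m|]$ using the symmetry. Splitting according to the sign of $Z-m$,
\[
E[|Z-m|]=\int_a^m(m-t)p(t)dt+\int_m^b(t-m)p(t)dt,
\]
and the change of variable $t\mapsto 2m-t$, combined with the identity $p(2m-t)=p(t)$ that follows from $Z-m\stackrel{\D}{=}m-Z$, shows that the two summands are equal. Consequently $E[|Z-m|]=2\int_a^m(m-t)p(t)dt$, so the denominator above becomes exactly $cE[|Z-m|]$, yielding the claim.

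There is no serious obstacle: the only nontrivial step is the reduction of $2\int_a^m(m-t)p(t)dt$ to $E[|Z-m|]$ via symmetry, and this is an elementary computation. The rest is pure substitution into Proposition \ref{genprop3}.
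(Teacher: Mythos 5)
Your proof is correct and follows the same route as the paper: the paper simply asserts $I(m)=\frac{c}{2}E[|Z-m|]$ and invokes Proposition \ref{genprop3}, while you spell out the substitution $t\mapsto 2m-t$ that justifies this identity via symmetry. Essentially the same argument, just with the elementary computation made explicit.
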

   
\begin{proof}
In this case we clearly have $I(m)=\frac{c}{2} E[\abs{Z-m}]$ which implies the result by Proposition \ref{genprop3}. 
\end{proof}

In the case that $\mu=N(0,1)$ and $c=1$ this result specializes to the well known bound $\sqrt{\frac{\pi}{2}}\fnorm{h-\mu(h)}$ (see \cite{CGS} or \cite{CheShaSing}, e.g.).

\begin{remark}\label{genrem2}
In the formulation of Proposition \ref{genprop3} it might suprise that there is no bound mentioned for $\fnorm{g_h'}$. This is because, in general a bound of the form $\fnorm{g_h'}\leq C \fnorm{h}$ does not exist with a finite constant $C$ in this setup. Note that this is contrary to the density approach, where one generally has such a bound (see \cite{ChSh} or \cite{CGS}).  
\end{remark}

Next, we will turn to Lipschitz continuous test functions $h$. In contrast to bounded measurable test functions, there we will also be able to prove useful bounds for $\fnorm{g_h'}$. 

In order to obtain bounds for Lipschitz continuous test functions we need a further condition on the distribution $\mu$ which guarantees that its expected value exists.

\begin{cond}\label{gencond3}
The density $p$ is positive on the interval $(a,b)$ and absolutely continuous on every compact interval $[c,d]\subseteq(a,b)$. Furthermore, $E[|Z|]=\int_{\abquer}\abs{x}p(x)dx<\infty$.
\end{cond}

The following proposition, which is also proved in the appendix, includes bounds for both, $g_h$ and $g_h'$, when $h$ is Lipschitz.

\begin{prop}\label{genprop4}
Under Conditions \ref{gencond3} and \ref{gencond2} we have for any Lipschitz continuous test function 
$h:\abquer\rightarrow\R$ and any $x\in\abquer$:
\begin{enumerate}[{\normalfont (a)}]
\item $\abs{g_h(x)}\leq\fnorm{h'}\frac{F(x)E[Z]-\int_a^x yp(y)dy}{I(x)}$
\item $\abs{g_h'(x)}\leq\fnorm{h'}\frac{\int_a^xF(s)dsG(x)+\int_x^b(1-F(s))dsH(x)}{p(x)\eta(x)^2}$
\end{enumerate}
Here, for $x\in\abquer$ the positive functions $H(x)$ and $G(x)$ are defined by 
\[H(x):=I(x)-\ga(x)F(x)=p(x)\eta(x)-\ga(x)F(x)\text{ and } G(x):=H(x)+\ga(x)\,.\]
\end{prop}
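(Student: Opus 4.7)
The plan is to rewrite $\tilde h := h - \mu(h)$ in a form that exposes $h'$. Starting from $\tilde h(t) = \int_a^b (h(t) - h(s)) p(s) ds$ and using $h(t) - h(s) = \int_s^t h'(u) du$ (with the usual sign convention), a Fubini exchange delivers the well-known identity
\[
\tilde h(t) = \int_a^t h'(u) F(u) du - \int_t^b h'(u)(1-F(u)) du .
\]
Substituting this into $I(x) g_h(x) = \int_a^x \tilde h(t) p(t) dt$ and interchanging the order of integration (splitting the resulting region according to whether $u$ lies to the left or right of $x$) yields the key representation
\[
I(x) g_h(x) = -(1-F(x)) \int_a^x h'(u) F(u) du - F(x) \int_x^b h'(u)(1-F(u)) du . \qquad (\star)
\]

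For part (a), I would bound $|h'(u)| \le \fnorm{h'}$ in $(\star)$, obtaining
\[
|g_h(x)| \le \fnorm{h'}\, \frac{(1-F(x)) \int_a^x F(u) du + F(x) \int_x^b (1-F(u)) du}{I(x)} .
\]
One integration by parts gives $\int_a^x F(u) du = x F(x) - \int_a^x u p(u) du$ and $\int_x^b (1-F(u)) du = \int_x^b u p(u) du - x(1-F(x))$, where the boundary contributions $u F(u)$ at $a$ and $u(1-F(u))$ at $b$ vanish thanks to $E[|Z|] < \infty$ from Condition \ref{gencond3}. After substitution the cross terms $\pm x F(x)(1-F(x))$ cancel and the numerator collapses to $F(x) E[Z] - \int_a^x y p(y) dy$, which is precisely (a).

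For part (b), I would differentiate $g_h(x) = I(x)^{-1} \int_a^x \tilde h(t) p(t) dt$, using $I'(x) = \gamma(x) p(x)$, to get
\[
g_h'(x) = \frac{p(x)}{I(x)^2}\Bigl[I(x) \tilde h(x) - \gamma(x) \int_a^x \tilde h(t) p(t) dt\Bigr] .
\]
Inserting the explicit formula for $\tilde h(x)$ together with $(\star)$ and collecting coefficients of $\int_a^x h'(u) F(u) du$ and of $\int_x^b h'(u)(1-F(u)) du$, the coefficients come out to be $I(x) + \gamma(x)(1-F(x)) = G(x)$ and $-(I(x) - \gamma(x) F(x)) = -H(x)$ respectively. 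Positivity of $G$ and $H$ on $(a,b)$ then follows from the strict monotonicity of $\gamma$ (Condition \ref{gencond2}(ii)) via the representations $H(x) = \int_a^x(\gamma(t) - \gamma(x)) p(t) dt$ and $G(x) = \int_x^b(\gamma(x) - \gamma(t)) p(t) dt$. The triangle inequality with $|h'| \le \fnorm{h'}$, combined with $p(x)/I(x)^2 = 1/(p(x)\eta(x)^2)$, then delivers (b); the endpoint cases $x \in \{a,b\}$ are handled by continuity.

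The only non-mechanical step is the integration-by-parts identity that collapses the crude bound in (a) to the compact form $F(x) E[Z] - \int_a^x y p(y) dy$; once the representation $(\star)$ is in hand, part (b) is essentially bookkeeping, since the natural groupings of $I, \gamma, F$ produced by the differentiation are exactly $G$ and $H$.
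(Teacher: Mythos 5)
Your proposal is correct and follows essentially the same route as the paper's own proof. The identity you call $(\star)$ is exactly Lemma \ref{liplemma}(b), your integration-by-parts formulas for $\int_a^x F$ and $\int_x^b(1-F)$ are Lemma \ref{genlemma1} (the paper derives them via Fubini rather than by parts, which avoids the boundary-term discussion, but the conclusion is the same), and your derivation of $g_h'$ by differentiating $g_h = I^{-1}\int_a^x \tilde h\,p$ is algebraically identical to the paper's use of Stein's equation $g_h' = \eta^{-1}(\tilde h - \gamma g_h)$, since $\int_a^x \tilde h\,p = I g_h$ and $p/I^2 = 1/(p\eta^2)$. The identification of the two coefficients as $G$ and $-H$, and the positivity argument via the monotonicity of $\gamma$, also match the paper's reasoning (positivity of $H$ is established in the proof of Proposition \ref{genprop3}, positivity of $G$ inside the proof of Proposition \ref{genprop4} itself).
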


\begin{remark}\label{genrem3}
 In general, the term $S(x):=\frac{F(x)E[Z]-\int_a^x yp(y)dy}{I(x)}$ cannot be bounded uniformly in $x\in(a,b)$ unless $\abs{\ga}$ grows at least linearly in $x$.
\end{remark}

\begin{cor}\label{gencor2}
Assume Condition \ref{gencond3} and that $\ga(x)=c(E[Z]-x)$ for some $c>0$. Then we have for any Lipschitz continuous test function $h:\abquer\rightarrow\R$ and each $x\in(a,b):$
\begin{enumerate}[{\normalfont (a)}]
 \item $\fnorm{g_h}\leq \frac{\fnorm{h'}}{c}$
 \item $\abs{g_h'(x)}\leq\frac{2\fnorm{h'}}{c}\frac{H(x)G(x)}{I(x)\eta(x)}
=2\fnorm{h'}\frac{\int_a^xF(s)ds\int_x^b(1-F(t))dt}{\eta(x)\bigl(E[Z]F(x)-\int_a^xyp(y)dy\bigr)}$

\end{enumerate}
\end{cor}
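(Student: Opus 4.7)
The plan is to derive Corollary \ref{gencor2} directly from Proposition \ref{genprop4} by specializing the quantities $I$, $H$, and $G$ to the linear coefficient $\gamma(x) = c(E[Z] - x)$. The only real work is to evaluate these three expressions more explicitly via integration by parts against the distribution function $F$.

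For part (a), I would start from the bound $|g_h(x)| \leq \fnorm{h'}\,\frac{F(x)E[Z] - \int_a^x y\, p(y)\, dy}{I(x)}$ of Proposition \ref{genprop4}(a). Substituting $\gamma(x) = c(E[Z]-x)$ into the definition of $I$ gives
$$I(x) = \int_a^x c\bigl(E[Z] - t\bigr) p(t)\, dt = c\left(E[Z] F(x) - \int_a^x t\, p(t)\, dt\right),$$
so numerator and denominator in the Proposition \ref{genprop4}(a) bound differ exactly by the factor $c$, and the estimate immediately collapses to $\fnorm{g_h} \leq \fnorm{h'}/c$.

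For part (b), the key identities to establish are
$$H(x) = c \int_a^x F(s)\, ds \quad \text{and} \quad G(x) = c \int_x^b \bigl(1-F(t)\bigr)\, dt.$$
The first follows by integration by parts, $\int_a^x t\, p(t)\, dt = xF(x) - \int_a^x F(s)\, ds$, which rewrites $I(x) = \gamma(x) F(x) + c \int_a^x F(s)\, ds$ and hence $H(x) = I(x) - \gamma(x) F(x) = c\int_a^x F(s)\, ds$. The second identity follows analogously: applying Fubini to $\int_x^b (1-F(t))\, dt$ yields $E[Z] - x + \int_a^x F(s)\, ds$, which equals $\frac{1}{c}\bigl(\gamma(x) + H(x)\bigr) = G(x)/c$. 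With these in hand, the numerator appearing in Proposition \ref{genprop4}(b) becomes symmetric,
$$\int_a^x F(s)\, ds \cdot G(x) + \int_x^b \bigl(1-F(s)\bigr)\, ds \cdot H(x) = \frac{2}{c}\, H(x) G(x),$$
and combining this with $p(x)\eta(x) = I(x)$ gives $|g_h'(x)| \leq \frac{2\fnorm{h'}}{c}\frac{H(x)G(x)}{I(x)\eta(x)}$. The explicit second form of the bound then follows by inserting the expressions for $H(x)$, $G(x)$, and $I(x)$ above and cancelling the factors of $c$.

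I expect no serious obstacle; the only minor point of care is the integration by parts when an endpoint is infinite, where Condition \ref{gencond3} ($E[|Z|] < \infty$) supplies the vanishing of the boundary terms $tF(t)$ as $t \to -\infty$ and $t\bigl(1-F(t)\bigr)$ as $t \to \infty$, so that the computations above remain valid without adjustment.
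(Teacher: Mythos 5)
Your proposal is correct and follows essentially the same route as the paper: specialize the bounds from Proposition \ref{genprop4} by computing $I(x)=c\bigl(E[Z]F(x)-\int_a^x yp(y)\,dy\bigr)$, $H(x)=c\int_a^x F(s)\,ds$, and $G(x)=c\int_x^b(1-F(s))\,ds$, the paper deriving the latter two via Lemma \ref{genlemma1} whereas you re-derive the same identities directly; the cancellations and final assembly are identical.
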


\begin{proof}
Claim (a) follows from Proposition \ref{genprop4} (a) and the observation that in this case we have 

\[I(x)=\int_a^x\ga(y)p(y)dy=c\int_a^x\bigl(E[Z]-y\bigr)p(y)dy=c\bigl(E[Z]F(x)-\int_a^x yp(y)dy\bigr)\,.\]

Part (b) follows from Proposition \ref{genprop4} (b) and Lemma \ref{genlemma1} by observing that in this case

\begin{eqnarray*}
H(x)&=&I(x)-\ga(x)F(x)=c\Biggl(\int_a^x\bigl(E[Z]-t\bigr)p(t)dt-\bigl(E[Z]-x\bigr)F(x)\Biggl)\\
&=&c\Biggl(E[Z]F(x)-\int_a^xtp(t)dt-E[Z]F(x)+xF(x)\Biggr)\\
&=&c\int_a^x F(s)ds
\end{eqnarray*} 

and, similarly, $G(x)=c\int_x^b(1-F(s))ds$.
\end{proof}

\begin{remark}\label{genrem4}
\begin{enumerate}[(i)]
 \item It is quite remarkable that in the case of normal approximation (via its classical Stein equation) the bound given in Corollary \ref{gencor2} (a) even improves on the best bound $2\fnorm{h'}$ currently mentioned in the literature (see, e.g. \cite{CGS} or \cite{CheShaSing}). In fact, in this case $c=1$ and thus our bound reduces to $\fnorm{h'}$.  

 \item For concrete distributions the ratio appearing in the bound for $g_h'(x)$ may be bounded uniformly in $x$ by some constant which can sometimes also be computed explicitely. Nevertheless, in \cite{EdViq} the authors give mild conditions for the existence of a finite constant $k$ such that $\fnorm{g_h'}\leq k\fnorm{h'}$ for any Lipschitz-continuous $h$. In practice, these conditions are usually met. However, there is no hope of estimating the constant $k$ by their method of proof. Thus, for concrete distributions and explicit constants it might therefore by useful to work with our bound from Corollary \ref{gencor2} (b).

\item For the normal distribution and also for the larger class of distributions discussed in \cite{EiLo10}, one also has a bound of the form $\fnorm{g_h''}\leq C\fnorm{h'}$ for some finite constant $C$ holding for each Lipschitz function $h$. As was shown by a universal example in \cite{EdViq}, such a bound cannot be expected unless $a=-\infty$ and $b=\infty$, if one takes $\ga(x)=c(E[Z]-x)$. This is why we will have to assume that $h'$ is also Lipschitz, for example by demanding that $h$ has two bounded derivatives. For the Stein solutions of the density approach, however, there are many examples of distributions, whose support is strictly included in $\R$ but for which such bounds are available (see, e.g., chapter 13 of \cite{CGS}).     
\end{enumerate}
\end{remark}

Next, we will discuss, how we can express the density $p$ of $\mu$ in terms of $\ga$ and $\eta$. This will be useful to bound the second derivative of $g_h$ in some special cases. Let $x_0$ be as in Condition \ref{gencond2}. Since $\eta'=\ga-\eta\psi$ and hence 
$\psi=\frac{\ga-\eta'}{\eta}$, we have

\begin{eqnarray}\label{dichteformel}
p(x)&=&p(x_0)\exp\Bigl(\int_{x_0}^x\psi(t)dt\Bigr)=p(x_0) \exp\Bigl(\int_{x_0}^x\frac{\ga(t)}{\eta(t)}dt\Bigr)\frac{\eta(x_0)}{\eta(x)}\\
&=&\frac{I(x_0)}{\eta(x)}\exp\Bigl(\int_{x_0}^x\frac{\ga(t)}{\eta(t)}dt\Bigr)\,.
\end{eqnarray}

Formula (\ref{dichteformel}) is a more general version of formula (3.14) in \cite{NouVie09} and is also derived in \cite{KuTu12}.
Now, differentiating Stein's equation (\ref{gensteineq}), we obtain for $h$ Lipschitz

\begin{equation}\label{gensecder}
\eta(x)g_h''(x)+g_h'(x)\bigl(\eta'(x)+\ga(x)\bigr)=h'(x)-\ga'(x)g_h(x)=:h_2(x)\,. 
\end{equation}

This means, that the function $\tilde{g}:=g_h'$ is a solution of the Stein equation corresponding to the test function $h_2$ for the distribution $\tilde{\mu}$ which satisfies the Stein identity

\[E\Bigl[\eta(Y)f'(Y)+\bigl(\eta'(Y)+\ga(Y)\bigr)f(Y)\Bigr]=0\,,\]

where $Y\sim \tilde{\mu}$. From (\ref{dichteformel}) we know that a density $\tilde{p}$ of $\tilde{\mu}$ is given by

\begin{equation}\label{formelptilde}
 \tilde{p}(x)=\frac{\tilde{K}}{\eta(x)}\exp\Bigl(\int_{x_0}^x\frac{\eta'(t)+\ga(t)}{\eta(t)}dt\Bigr)=K\exp\Bigl(\int_{x_0}^x\frac{\ga(t)}{\eta(t)}dt\Bigr)\,,
\end{equation}

where $\tilde{K},K>0$ are suitable normalizing constants. Thus, if we have bounds for the first derivative of the Stein solutions for the distribution $\tilde{\mu}$ and for Lipschitz functions $h$, then we obtain from this observation bounds on $g_h''$ for $h$ such that $h_2$ is Lipschitz (if $\ga(x)=c(E[Z]-x)$, this essentially means that $h'$ must be Lipschitz).   
Of course, it has to be verified, that $\tilde{\mu}(h_2)=0$ in order to use these bounds. But this will follow immediately, if one can show that $\tilde{g}$ belongs to a class of functions for which the Stein identity for $\tilde{\mu}$ is valid. \\

In the following we are going to adress the question of how to extend Stein's equation and its solution on $\R\setminus\abquer$. To this end, we suppose that the functions $\ga$ and $\eta$ both have a natural extension to all of $\R$, for example that they are given by some ``analytical expression'' on $\abquer$ which still makes sense on $\R\setminus\abquer$. 
We will need the following condition.

\begin{cond}\label{condout1}
The functions $\ga$ and $\eta$ are defined as before on $(a,b)$ and may be extended on $\R$ such that the following properties are satisfied:
\begin{enumerate}[(i)]
 \item On $\abquer$ the function $\ga$ has all the properties listed in Condition \ref{gencond1} and is continuous and strictly decreasing on $\R$.
\item The function $\eta$ is given by (\ref{etaformel}) for $x\in(a,b)$ and is absolutely continuous on every compact sub-interval $[c,d]\subseteq\R$. Furthermore, $\eta(x)\not=0$ for all $x\in\R\setminus\{a,b\}$.
\end{enumerate}
\end{cond}

Let $h:\R\rightarrow\R$ be a given Borel-measurable test function with $E\bigl[\abs{h(Z)}\bigr]<\infty$. Then we have the following Stein equation, valid now for all $x\in\R$: 

\begin{equation}\label{gensteineq2}
 \eta(x)g'(x)+\ga(x)g(x)=h(x)-\mu(h)=:\htilde(x)
\end{equation}

We already know, how to solve (\ref{gensteineq2}) for $x\in(a,b)$. So now, we will assume that at least one of $a$ and $b$ is finite and try to solve the equation outside $(a,b)$. Furthermore, we will discuss conditions that ensure that the composed solution $g_h$ behaves nicely at the edges $a$ and/or $b$. We will henceforth assume Condition \ref{condout1}.\\
For $x\not=a,b$ equation (\ref{gensteineq2}) is clearly equivalent to 

\begin{equation}\label{gensteineq3}
 g'(x)=-\frac{\ga(x)}{\eta(x)}g(x)+\frac{\htilde(x)}{\eta(x)}\,.
\end{equation}

Let us assume that both $a>-\infty$ and $b<\infty$ (the other cases are of course included) and let $F_l$ be any primitive function of $\frac{\ga}{\eta}$ on $(-\infty,a)$. Such a function exists by continuity and is hence continuously differentiable. 
By the method of variation of the constant one may derive the following formula for $x\in(-\infty,a)$:

\begin{equation}\label{solformleft}
g_h(x):= \exp\bigl(-F_l(x)\bigr)\int_a^x\frac{\htilde(t)}{\eta(t)}\exp\bigl(F_l(t)\bigr)dt\,,
\end{equation}
  
if this integral exists. Note that this property does not depend on the particular choice of the primitive function $F_l$.
For a fixed primitive function $F_l$ of $\frac{\ga}{\eta}$ on $(-\infty,a)$ we define the function 
\[q_l:=\frac{\exp\bigl(F_l\bigr)}{\eta}\,.\]

Analogously, for a given primitive function $F_r$ of $\frac{\ga}{\eta}$ on $(b,\infty)$ we define the function 
\[q_r:=\frac{\exp\bigl(F_r\bigr)}{\eta}\,.\] 

Note that inside the interval $(a,b)$ we have that $\log(\eta p)$ is a primitive of $\frac{\ga}{\eta}$ and hence $q_l$ plays the role of $p$ on $(-\infty,a)$ (and similarly for $q_r$). As we have observed, we will need the following Condition:

\begin{cond}\label{condout2}
There exist primitive functions $F_l:(-\infty,a)\rightarrow\R$ and $F_r:(b,\infty)\rightarrow\R$ of $\frac{\ga}{\eta}$ on $(-\infty,a)$ and on $(b,\infty)$, respectively, such that for each $x\in(-\infty,a)$ the function $q_l$ is integrable over $[x,a)$ and for each $y\in(b,\infty)$ the function $q_r$ is integrable over $(b,y]$. We may thus define functions $Q_l:(-\infty,a)\rightarrow\R$ by 
$Q_l(x):=\int_a^x q_l(t)dt$ and $Q_r:(b,\infty)\rightarrow\R$ by $Q_r(y):=\int_b^y q_r(t)dt$.
\end{cond}

Similarly, for $x\in(b,\infty)$ we arrive at the definition 

\begin{equation}\label{solformright}
g_h(x):= \exp\bigl(-F_r(x)\bigr)\int_b^x\frac{\htilde(t)}{\eta(t)}\exp\bigl(F_r(t)\bigr)dt
= \exp\bigl(-F_r(x)\bigr)\int_b^x \htilde(t)q_r(t)dt \,,
\end{equation}

if this integral exists. So, in the following we will always implicitly assume that also $\int_b^y \abs{\htilde(t)q_r(t)}dt<\infty$ and 
$\int_x^a \abs{\htilde(t)q_l(t)}dt<\infty$ both hold for each $x\in(-\infty,a)$ and each $y\in(b,\infty)$.

\begin{remark}\label{remout1}
 Note that the definition of the solution $g_h$ does not depend on the choice of the primitive functions $F_l$ and $F_r$ since two such functions may only differ by an additive constant.
\end{remark}

Next, we prove that the above constructed solution $g_h$ is continuous as long as $h$ is continuous at $a$ and $b$.
To deal with the limits $\lim_{x \nearrow a} g_h(x)$ and $\lim_{x\searrow b}g_h(x)$ we first formulate a condition which will usually be satisfied in practice.

\begin{cond}\label{condout3}
The functions $F_l$ and $F_r$ satisfy $\lim_{x \nearrow a} F_l(x)=\pm\infty$ and\\
 $\lim_{x\searrow b}F_r(x)=\pm\infty$.
\end{cond}

Again, the validity of this condition does not depend on the choice of the functions $F_l$ and $F_r$. 
By Condition \ref{condout3} we may again apply de l'H\^{o}pital's rule to compute

\begin{eqnarray}\label{conta2}
\lim_{x\nearrow a}g_h(x)&=& \lim_{x\nearrow a}\frac{\int_a^x\htilde(t)q_l(t)dt}{\exp\bigl(F_l(x)\bigr)}
=\lim_{x\nearrow a}\frac{\htilde(x)q_l(x)}{\exp\bigl(F_l(x)\bigr)F_l'(x)}\nonumber\\
&=&\lim_{x\nearrow a}\frac{\htilde(x)q_l(x)}{\ga(x)q_l(x)}
=\lim_{x\nearrow a}\frac{\htilde(x)}{\ga(x)}=\frac{1}{\ga(a)}\lim_{x\nearrow a}\htilde(x)
\end{eqnarray}

and, similarly, $\lim_{x\searrow b}g_h(x)=\frac{1}{\ga(b)}\lim_{x\searrow b}\htilde(x)$,
if these limits exist. Again, the continuity of $h$ at $a$ and $b$ is sufficient for this to hold. By Proposition \ref{propcont} we can thus conclude the following proposition.

\begin{prop}\label{propout1}
Assume Conditions \ref{condout1}, \ref{condout2} and \ref{condout3} and let $h:\R\rightarrow\R$ be a Borel-measurable test function satisfying the above integrability conditions and being continuous at $a$ and $b$. Then the Stein solution $g_h$ as defined above may be extended to a continuous function on $\R$ by letting 

\[g_h(a):=\frac{h(a)-\mu(h)}{\ga(a)}\quad\text{and}\quad g_h(b):=\frac{h(b)-\mu(h)}{\ga(b)}\,.\]

\end{prop}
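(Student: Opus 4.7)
The plan is to decompose the argument into three pieces: continuity on each of the open intervals $(-\infty,a)$, $(a,b)$, $(b,\infty)$, followed by matching the four one-sided limits at the boundary points $a$ and $b$ against the prescribed values $\htilde(a)/\ga(a)$ and $\htilde(b)/\ga(b)$.

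First, I would verify continuity on the three open intervals. On $(a,b)$ the representation (\ref{gensteinsol}) writes $g_h$ as the ratio of the indefinite integral $\int_a^x\htilde(t)p(t)\,dt$, which is continuous because $E[|h(Z)|]<\infty$, to the positive continuous function $I=p\eta$ (positivity by Proposition \ref{genprop1}(b), continuity by Proposition \ref{genprop2}(a)). On $(-\infty,a)$ and $(b,\infty)$ the formulas (\ref{solformleft}) and (\ref{solformright}) exhibit $g_h$ as the product of the continuous exponential $\exp(-F_l)$ respectively $\exp(-F_r)$, whose continuity comes from $F_l,F_r$ being primitives of the continuous ratio $\ga/\eta$, with a continuous indefinite integral of $\htilde q_l$ respectively $\htilde q_r$; this last continuity uses the local integrability assumption following Condition \ref{condout2} together with the standing integrability of $h$.

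Next, I would match the one-sided limits at the boundary points. From inside $(a,b)$, Proposition \ref{propcont} already gives
\[\lim_{x\searrow a}g_h(x)=\frac{\htilde(a)}{\ga(a)}\quad\text{and}\quad\lim_{x\nearrow b}g_h(x)=\frac{\htilde(b)}{\ga(b)}.\]
From outside, the chain of equalities (\ref{conta2}) supplies $\lim_{x\nearrow a}g_h(x)=\htilde(a)/\ga(a)$: its only non-trivial input is the application of de l'H\^opital's rule to the quotient $\int_a^x\htilde(t)q_l(t)\,dt/\exp(F_l(x))$ as $x\nearrow a$, with the denominator tending to $\pm\infty$ by Condition \ref{condout3}. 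Almost-everywhere differentiation of numerator and denominator produces $\htilde q_l/(\ga q_l)=\htilde/\ga$, which has the correct limit at $a$ by continuity of $h$ there. The analogous argument at $b$, using $F_r$ and the second half of Condition \ref{condout3}, yields $\lim_{x\searrow b}g_h(x)=\htilde(b)/\ga(b)$. Together with the defined values at $a$ and $b$, these four limits produce a continuous extension of $g_h$ to all of $\R$.

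The principal technical point is the invocation of de l'H\^opital's rule in this absolutely continuous setting: $F_l$ and $F_r$ are only guaranteed to be absolutely continuous, so their derivatives exist only almost everywhere, whereas ordinary formulations of de l'H\^opital assume a genuine derivative. This is circumvented by the generalized de l'H\^opital rule (Theorem \ref{Hospital}) from the appendix, and that theorem is the sole non-routine ingredient; everything else is reading off the explicit formulas (\ref{gensteinsol}), (\ref{solformleft}), (\ref{solformright}) and invoking the continuity facts already established.
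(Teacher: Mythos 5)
Your proof follows the same route as the paper: Proposition \ref{propcont} supplies the one-sided limits from inside $(a,b)$, and the chain (\ref{conta2}), justified by the generalized de l'H\^opital rule (Theorem \ref{Hospital}), supplies the one-sided limits from outside; the paper simply leaves continuity on the three open intervals implicit, whereas you spell it out. One small slip: you write that ``the denominator tending to $\pm\infty$'' is what Condition \ref{condout3} gives, but the denominator $\exp\bigl(F_l(x)\bigr)$ is always positive and cannot tend to $-\infty$; it is $F_l(x)$ that Condition \ref{condout3} sends to $\pm\infty$, and in fact (as Proposition \ref{propout2}(b) later establishes) $F_l(x)\to-\infty$, so the denominator tends to $0$ and Theorem \ref{Hospital} is invoked in its $0/0$ form together with the fact that the numerator $\int_a^x\htilde(t)q_l(t)\,dt$ also vanishes as $x\nearrow a$ by the standing integrability assumption. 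This does not damage the argument, but the phrasing suggests you were picturing an $\infty/\infty$ form, which Theorem \ref{Hospital} as stated does not cover.
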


Next, we want to present bounds on $g_h(x)$ and $g_h'(x)$ for $x\notin\abquer$. But first we will show that our conditions 
already imply that $\eta(x)<0$ if $x<a$ or $x>b$. Since $F_l'(x)=\frac{\ga(x)}{\eta(x)}$ is then negative on $(-\infty,a)$ this also ensues that $\lim_{x\nearrow a}F_l(x)=-\infty$. Similarly, $\lim_{x\searrow b} F_r(x)=-\infty$.

\begin{prop}\label{propout2}  
Assume Conditions \ref{condout1}, \ref{condout2} and \ref{condout3}. Then the functions $\eta$, $F_l$ and $F_r$ have the following properties:

\begin{enumerate}[{\normalfont (a)}]
\item For all $x\in\R\setminus\abquer$ we have $\eta(x)<0$. Especially, by continuity we have $\eta(a)=\eta(b)=0$.
\item We have $\lim_{x\nearrow a}F_l(x)=\lim_{x\searrow b} F_r(x)=-\infty$.
\end{enumerate}
\end{prop}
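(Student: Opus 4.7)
The plan is to prove (a) by a two-sub-case contradiction argument, from which (b) follows almost immediately. Two preliminary observations streamline everything. First, by Condition \ref{condout1}(i), $\ga$ is continuous and strictly decreasing on $\R$; since $\int_a^b\ga(t)p(t)dt=0$ with $p>0$ on $(a,b)$, the unique zero $x_0$ of $\ga$ must lie inside $(a,b)$, so $\ga>0$ on $(-\infty,a]$ and $\ga<0$ on $[b,\infty)$. Second, the absolute continuity in Condition \ref{condout1}(ii) gives continuity of $\eta$ on all of $\R$, and the non-vanishing of $\eta$ on $(-\infty,a)\cup(b,\infty)$ forces $\eta$ to have constant sign on each of these components (intermediate value theorem).

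\textbf{Proof of (a).} I treat $(-\infty,a)$; the interval $(b,\infty)$ is handled symmetrically via $F_r$, $q_r$ and $\ga(b)<0$. Suppose for contradiction that $\eta>0$ on $(-\infty,a)$. By continuity, $\eta(a)=\lim_{x\nearrow a}\eta(x)\geq 0$, and I distinguish two sub-cases. If $\eta(a)>0$, then $F_l'=\ga/\eta$ is bounded on some neighbourhood of $a$ in $(-\infty,a]$, so $F_l$ has a finite limit at $a^-$, contradicting Condition \ref{condout3}. If $\eta(a)=0$, then for $t$ sufficiently close to $a$ from the left $\eta(t)\to 0^+$ while $\ga(t)\geq\ga(a)>0$, so $F_l'(t)\to+\infty$; hence $F_l$ is strictly increasing on some $[a-\delta,a)$ and $\lim_{x\nearrow a}F_l(x)=+\infty$ (a finite limit is again excluded by Condition \ref{condout3}). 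Using the identity $1/\eta=F_l'/\ga$ on this interval together with the monotonicity bound $\ga(t)\leq\ga(a-\delta)$,
\begin{equation*}
\int_{a-\delta}^{a}q_l(t)\,dt=\int_{a-\delta}^{a}\frac{\exp(F_l(t))F_l'(t)}{\ga(t)}\,dt\geq\frac{1}{\ga(a-\delta)}\Bigl[\exp(F_l(t))\Bigr]_{t=a-\delta}^{t=a^-}=+\infty,
\end{equation*}
which contradicts the integrability of $q_l$ near $a$ demanded by Condition \ref{condout2}. Hence $\eta<0$ on $(-\infty,a)$, and symmetrically $\eta<0$ on $(b,\infty)$. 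Since $\eta$ is continuous and takes opposite signs on the two sides of $a$ (respectively $b$), the boundary values $\eta(a)=\eta(b)=0$ follow.

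\textbf{Proof of (b) and the main obstacle.} On $(-\infty,a)$, (a) gives $\ga>0$ and $\eta<0$, so $F_l'=\ga/\eta<0$; thus $F_l$ is strictly decreasing, its one-sided limit at $a^-$ is bounded above by any value $F_l(a-\delta)$ and hence lies in $\R\cup\{-\infty\}$, and Condition \ref{condout3} forces it to equal $-\infty$. The same reasoning for $F_r$ on $(b,\infty)$, where now $\ga<0$ and $\eta<0$ give $F_r'>0$ so $F_r$ is strictly increasing, yields a one-sided limit at $b^+$ that is again bounded above, hence $-\infty$. I expect the main obstacle to be the sub-case $\eta(a)=0$ of (a): both $\exp(F_l)$ and $1/\eta$ blow up near $a$ and the integrability of $q_l$ cannot be assessed directly; the substitution $1/\eta=F_l'/\ga$ is the key device that converts the integrand into an exact derivative (up to a bounded factor) and makes the divergence transparent.
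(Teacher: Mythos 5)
Your proof is correct, and for part (a) it takes a genuinely different route than the paper. The paper, after assuming $\eta>0$ on $(-\infty,a)$ for contradiction, applies de l'H\^opital's rule directly to the ratio $-Q_l(x)/\exp(F_l(x))$ as $x\nearrow a$ and obtains the negative limit $-1/\ga(a)$, contradicting nonnegativity of the ratio. Your version instead splits on whether $\eta(a)>0$ or $\eta(a)=0$: in the first sub-case you rule out $\eta>0$ by bounding $F_l'$ and thereby forcing a finite limit for $F_l$ (contradicting Condition \ref{condout3}); in the second you force $F_l\to+\infty$ and then show $\int q_l$ diverges near $a$ via the exact-derivative substitution $1/\eta=F_l'/\ga$, contradicting Condition \ref{condout2}. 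The trade-off is that the paper's argument is shorter but, under the assumption $\eta>0$, the function $F_l$ is increasing and hence $\exp(F_l)\to+\infty$, so the ratio is of the form $0/\infty$ rather than $0/0$; Theorem \ref{Hospital} as stated in the paper covers only the $0/0$ form, so the paper implicitly relies on the standard ``$g\to\infty$'' variant of de l'H\^opital. Your two sub-cases avoid this issue entirely, yielding a more elementary and more plainly self-contained argument, at the cost of a bit more bookkeeping. Part (b) is proved the same way in both: once $\eta<0$ is known, $F_l$ is decreasing and $F_r$ is increasing (in $x$), so their one-sided limits at $a^-$ and $b^+$ are bounded above and Condition \ref{condout3} forces them to $-\infty$.
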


\begin{proof}
To prove (a), first note, that by Condition \ref{condout1} $\eta$ has no sign changes on $(-\infty,a)$. Suppose contrarily to the assertion, that $\eta(x)>0$ for all $x\in(-\infty,a)$. Then, the function $q_l$ is also positive and hence 
$0\leq\int_x^a q_l(t)dt=-Q_l(x)$ for each $x\in(-\infty,a)$. By Conditions \ref{condout2} and \ref{condout3} we may apply de l'H\^{o}pital's rule to conclude

\begin{eqnarray*}
0&\leq&\lim_{x\nearrow a}\frac{-Q_l(x)}{\exp\bigl(F_l(x)\bigr)}=\lim_{x\nearrow a}\frac{-q_l(x)}{\frac{\ga(x)}{\eta(x)}\exp\bigl(F_l(x)\bigr)}=\lim_{x\nearrow a}\frac{-q_l(x)}{\ga(x)q_l(x)}\\
&=&-\lim_{x\nearrow a}\frac{1}{\ga(x)}=\frac{-1}{\ga(a)}<0\,,
\end{eqnarray*} 

by Condition \ref{condout1}. This is a contradiction and hence we must have $\eta(x)<0$ for $x\in(-\infty,a)$.
Similarly, one shows that also $\eta(x)<0$ for $x\in(b,\infty)$.\\
To prove (b), note that $F_l'(x)=\frac{\ga(x)}{\eta(x)}<0$ for $x<a$. By Condition \ref{condout3} this necessarily implies that $\lim_{x\nearrow a}F_l(x)=-\infty$. Analogously, we have $F_r'(x)>0$ for $x>b$ (since $\ga(x)<0$ there) which by Condition \ref{condout3} implies that $\lim_{x\searrow b} F_r(x)=-\infty$. 
\end{proof}

Proposition \ref{propout2} particularly implies the conclusion of Proposition \ref{etaprop} and hence makes Condition \ref{condboundsupp} redundant, at least as far as the assertion of this proposition is concerned. 
In order to get general bounds, we will need yet another condition on the functions $F_l$ and $F_r$

\begin{cond}\label{condout4}
The functions $F_l$ and $F_r$ satisfy $\lim_{x\searrow -\infty} F_l(x)=\lim_{x\nearrow \infty}F_r(x)=+\infty$. 
\end{cond}

The next result gives bounds on $g_h$ for bounded, Borel-measurable functions $h$. As usual, the proof is in the appendix.

\begin{prop}\label{propout3}
Assume Conditions \ref{condout1}, \ref{condout2}, \ref{condout3} and \ref{condout4} and let $m$ be a median for $\mu$. Then, for any bounded and Borel-measurable test function $h:\R\rightarrow\R$ we have 

\[\fnorm{g_h}\leq\fnorm{h-\mu(h)}\max\Biggl(\frac{1}{2I(m)},\,\frac{1}{\ga(a)},\,\frac{-1}{\ga(b)}\Biggr)\,.\]

\end{prop}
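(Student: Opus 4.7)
The plan is to bound $|g_h(x)|$ separately on each of the three intervals $(-\infty,a)$, $(a,b)$, $(b,\infty)$ on which $g_h$ is defined by a different integral formula, and then take the maximum. On $(a,b)$ Proposition \ref{genprop3} already yields $\fnorm{h-\mu(h)}/(2I(m))$, so the remaining work is on the two exterior pieces; by symmetry it is enough to describe $(-\infty,a)$ in detail.

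For $x<a$ I would first rewrite \eqref{solformleft} as $g_h(x)=-\exp(-F_l(x))\int_x^a\htilde(t)q_l(t)\,dt$. Since $a\le x_0$ and $\ga$ is strictly decreasing on $\R$ by Condition \ref{condout1}, $\ga>0$ on $(-\infty,a)$; combined with $\eta<0$ from Proposition \ref{propout2} this forces $q_l<0$ there, so $Q_l(x)=-\int_x^a q_l(t)\,dt>0$ and a pointwise estimate on the integrand immediately produces
\[
|g_h(x)|\le\fnorm{h-\mu(h)}\,\phi_l(x),\qquad\phi_l(x):=Q_l(x)\exp(-F_l(x)).
\]
Thus everything reduces to bounding $\phi_l$ uniformly on $(-\infty,a)$.

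The key trick will be that $\phi_l$ satisfies the linear ODE $\eta(x)\phi_l'(x)+\ga(x)\phi_l(x)=1$, obtained by straightforward differentiation using $F_l'=\ga/\eta$ and $q_l=\exp(F_l)/\eta$. At any interior critical point $x^\ast$ this forces $\phi_l(x^\ast)=1/\ga(x^\ast)<1/\ga(a)$, strict inequality by strict monotonicity of $\ga$. For the boundary limits I would invoke Theorem \ref{Hospital}: at $x\nearrow a$ both $Q_l$ and $\exp(F_l)$ tend to $0$ by Proposition \ref{propout2}, and a single application of l'H\^{o}pital gives the limit $1/\ga(a)$; at $x\searrow-\infty$, Condition \ref{condout4} ensures $\exp(F_l)\to\infty$ and the same computation yields $1/\ga(-\infty)\in[0,1/\ga(a))$. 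Since $\phi_l$ is continuous on $(-\infty,a)$ and extends continuously to both endpoints, the compactness of $[-\infty,a]$ together with the above values at critical points and limits shows $\sup_{x<a}\phi_l\le 1/\ga(a)$.

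The same strategy works on $(b,\infty)$ with symmetric sign bookkeeping: now $\ga<0$ and $\eta<0$, so one works with $\phi_r(x):=-Q_r(x)\exp(-F_r(x))>0$, which satisfies $\eta\phi_r'+\ga\phi_r=-1$, giving interior critical values $-1/\ga(x^\ast)<-1/\ga(b)$ and boundary limits $-1/\ga(b)$ at $b$ and an element of $[0,-1/\ga(b))$ at $\infty$. Combining the three regional bounds yields the asserted inequality. The main obstacle, which drives the whole argument, is ensuring that $\phi_l$ and $\phi_r$ are genuinely bounded on the unbounded sides $-\infty$ and $+\infty$; this is exactly what Condition \ref{condout4} buys us through l'H\^{o}pital, and why the ODE identity is indispensable, as it supplies the value of $\phi_l$ (resp.\ $\phi_r$) at every interior critical point without any integration by hand.
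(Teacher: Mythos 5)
Your proposal is correct, and it shares with the paper the region decomposition and the reduction to bounding the quantity $\phi_l(x)=Q_l(x)\exp(-F_l(x))$ (and its mirror $\phi_r$), but the final step is genuinely different. The paper simply introduces the auxiliary function $D(x)=\exp(F_l(x))/\ga(a)-Q_l(x)$, computes $D'(x)=q_l(x)\bigl(\ga(x)/\ga(a)-1\bigr)<0$, and combines this with $\lim_{x\nearrow a}D(x)=0$ to conclude $D>0$ on $(-\infty,a)$. That single monotonicity argument yields the bound without ever looking at $-\infty$. You instead observe (correctly) that $\phi_l$ solves $\eta\phi_l'+\ga\phi_l=1$ -- i.e.\ it is the ``Stein solution'' on $(-\infty,a)$ for the constant right-hand side $1$ -- and then run a maximum-principle argument: interior critical points give values $1/\ga(x^\ast)<1/\ga(a)$, the limit at $a$ is $1/\ga(a)$, and the limit at $-\infty$ lies in $[0,1/\ga(a))$. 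Your approach is conceptually illuminating (it exposes the ODE structure and is the same strategy that works for the harder bounds on $g_h'$), but it pays two small technical costs that the paper's argument avoids: it needs a version of l'H\^{o}pital at $-\infty$ in the $\infty/\infty$ form, which Theorem~\ref{Hospital} as stated does not cover, and strictly one should distinguish the case $\lim_{x\to-\infty}Q_l(x)<\infty$ (where $\phi_l\to 0$ trivially by Condition~\ref{condout4}) from the case $Q_l\to\infty$ (where the $\infty/\infty$ rule is applied, noting $\lim_{x\to-\infty}\ga(x)$ exists in $(\ga(a),\infty]$ by monotonicity). Both are easily fixed, so the proof stands; but the paper's $D$-function trick is tighter, requiring only the $0/0$ rule -- and that only implicitly, via $\lim_{x\nearrow a}D(x)=0$, which follows from Condition~\ref{condout2} and Proposition~\ref{propout2} without any l'H\^{o}pital at all.
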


Before turning to Lipschitz test functions, we dicuss properties of the functions $\exp(F_l)$ and $\exp(F_r)$, respectively. In particular, we will show, that they correspond to the function $I$ on $(a,b)$ and have similar integral representations.

\begin{prop}\label{propIout} 
We define the functions $I_l:(-\infty,a)\rightarrow\R$ and\\ $I_r:(b,\infty)\rightarrow\R$ by $I_l(x):=\exp\bigl(F_l(x)\bigr)$ and 
$I_r(x):=\exp\bigl(F_r(x)\bigr)$. Then the following representations hold:

\begin{enumerate}[{\normalfont (a)}]
 \item For each $x\in(-\infty,a)$ we have $I_l(x)=\int_a^x \ga(t)q_l(t)dt$.
 \item For each $x\in(b,\infty)$ we have $I_r(x)=\int_b^x \ga(t)q_r(t)dt$.
\end{enumerate}
In particular, these integrals exist. 
\end{prop}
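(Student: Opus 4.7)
The plan is to recognize that $I_l$ and $I_r$ are themselves antiderivatives of $\ga\cdot q_l$ and $\ga\cdot q_r$, respectively, and then apply the fundamental theorem of calculus combined with the boundary behaviour already established in Proposition \ref{propout2}(b).

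I would begin with part (a). Since $F_l$ is by definition a primitive of $\ga/\eta$ on $(-\infty,a)$ and $\ga/\eta$ is continuous there (by Condition \ref{condout1}), the function $F_l$ is continuously differentiable. Hence $I_l=\exp(F_l)$ is continuously differentiable on $(-\infty,a)$ with
\[
I_l'(x)=F_l'(x)\exp\bigl(F_l(x)\bigr)=\frac{\ga(x)}{\eta(x)}\exp\bigl(F_l(x)\bigr)=\ga(x)q_l(x).
\]
For any fixed $x\in(-\infty,a)$ and any $y\in(x,a)$, the FTC then yields
\[
I_l(y)-I_l(x)=\int_x^y\ga(t)q_l(t)\,dt.
\]
I would next send $y\nearrow a$. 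On the left, Proposition \ref{propout2}(b) gives $F_l(y)\to-\infty$ and hence $I_l(y)\to 0$. On the right, the integral over $[x,a)$ exists because $\ga$ is continuous, hence bounded, on the compact set $[x,a]$, and $q_l$ is integrable over $[x,a)$ by Condition \ref{condout2}; therefore $\ga\cdot q_l$ is integrable over $[x,a)$ as the product of a bounded and an integrable function, and the integrals $\int_x^y\ga(t)q_l(t)\,dt$ converge to $\int_x^a\ga(t)q_l(t)\,dt$ by dominated convergence. This yields
\[
-I_l(x)=\int_x^a\ga(t)q_l(t)\,dt,\qquad\text{i.e.}\qquad I_l(x)=\int_a^x\ga(t)q_l(t)\,dt,
\]
which is exactly (a); existence of the displayed integral is part of the conclusion.

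Part (b) follows by an entirely symmetric argument: $I_r=\exp(F_r)$ is continuously differentiable on $(b,\infty)$ with derivative $\ga\cdot q_r$, FTC on $[b+\eps,y]$ together with $\lim_{x\searrow b}F_r(x)=-\infty$ (again Proposition \ref{propout2}(b)) gives the representation after passage to the limit $\eps\to 0$.

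The only substantive point is the limit interchange at the singular endpoints $a$ and $b$, and this is handled cleanly by the information from Proposition \ref{propout2}(b) together with the integrability stipulated in Condition \ref{condout2}; no further assumption is needed.
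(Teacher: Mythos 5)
Your proposal is correct and is essentially the paper's argument: the paper computes $\int_{a_n}^x \ga(t)q_l(t)\,dt = \int_{a_n}^x F_l'(t)e^{F_l(t)}\,dt = e^{F_l(x)} - e^{F_l(a_n)}$ by substitution and then lets $a_n\to a$ using Proposition~\ref{propout2}(b), which is the same FTC-plus-boundary-limit step you perform after first noting $I_l' = \ga q_l$. Your extra remark justifying convergence of the right-hand side via boundedness of $\ga$ and integrability of $q_l$ is a harmless (and slightly more explicit) way of handling the improper endpoint.
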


\begin{proof}
We only prove (a). Let $x\in(-\infty,a)$ and let $(a_n)_{n\in\N}$ be any sequence in $(-\infty,a)$ with $\lim_{n\to \infty} a_n=a$. Then, for each $n\in\N$

\begin{eqnarray*}
\int_{a_n}^x\ga(t)q_l(t)dt&=&\int_ {a_n}^x\frac{\ga(t)}{\eta(t)}\exp\bigl(F_l(t)\bigr)dt
=\int_ {a_n}^xF_l'(t)\exp\bigl(F_l(t)\bigr)dt\\
&=&\int_{F_l(a_n)}^{F_l(x)} e^s ds=\exp\bigl(F_l(x)\bigr)-\exp\bigl(F_l(a_n)\bigr)\\
&\stackrel{n\to\infty}{\longrightarrow}&\exp\bigl(F_l(x)\bigr)\,,
\end{eqnarray*}
 
by Proposition \ref{propout2}.
Hence $\int_a^x\ga(t)q_l(t)dt$ exists and equals $I_l(x)$. 
\end{proof}

\begin{prop}\label{propout4}
Assume Conditions \ref{gencond3}, \ref{condout1}, \ref{condout2}, \ref{condout3} and \ref{condout4}. 
Then, for any Lipschitz-continuous test function $h:\R\rightarrow\R$ the following bounds hold true: 
\begin{enumerate}[{\normalfont (a)}]

\item For each $x\in(-\infty,a)$ we have $\abs{g_h(x)}\leq\fnorm{h'}\frac{Q_l(x)E[Z]-\int_a^x sq_l(s)ds}{I_l(x)}$.
\item For each $x\in(b,\infty)$ we have  $\abs{g_h(x)}\leq\fnorm{h'}\frac{Q_r(x)E[Z]-\int_b^x sq_r(s)ds}{I_r(x)}$.

\item For each $x\in(-\infty,a)$ we have

\begin{eqnarray*}
\abs{g_h'(x)}&\leq&\frac{\fnorm{h'}}{-\eta(x)I_l(x)}\Biggl(\ga(x)\Bigl(-xQ_l(x)+\int_a^xtq_l(t)dt\Bigr)\\
&&+\bigl(E[Z]-x\bigr) \Bigl(Q_l(x)\ga(x)-I_l(x)\Bigr)\Biggr)
\end{eqnarray*}

\item For each $x\in(b,\infty)$ we have

\begin{eqnarray*}
\abs{g_h'(x)}&\leq&\frac{\fnorm{h'}}{-\eta(x) I_r(x)}\Biggl(\ga(x)\Bigl(xQ_r(x)-\int_b^xtq_r(t)dt\Bigr)\\
&&+\bigl(x-E[Z]\bigr)\Bigl(\ga(x)Q_r(x)-I_r(x)\Bigr)\Biggr) 
\end{eqnarray*}

\end{enumerate}
\end{prop}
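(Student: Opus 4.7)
The plan is to handle (a), (b) by a direct Lipschitz estimate and (c), (d) by rearranging the Stein equation followed by integration by parts. Both strategies rest on a common exterior simplification: since $\supp(\mu)\subseteq\abquer$ and $\mu$ is absolutely continuous, for any $t<a$ one has $Z>t$ almost surely, so
\[
E[|t-Z|]=E[Z]-t, \qquad \text{and dually}\quad E[|t-Z|]=t-E[Z]\ \text{for }t>b.
\]

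For (a), starting from $g_h(x)I_l(x)=\int_a^x \htilde(t)q_l(t)\,dt$, I would write $\htilde(t)=E[h(t)-h(Z)]$ and combine the Lipschitz property with the display above to obtain $|\htilde(t)|\le\fnorm{h'}(E[Z]-t)$ for $t<a$. Orienting the integral (since $x<a$, $\int_a^x=-\int_x^a$) and using $-q_l(t)>0$ from Proposition \ref{propout2} then yields
\[
|g_h(x)|\,I_l(x)\le \fnorm{h'}\int_x^a(E[Z]-t)(-q_l(t))\,dt
=\fnorm{h'}\Bigl(Q_l(x)E[Z]-\int_a^x t\,q_l(t)\,dt\Bigr).
\]
Part (b) is identical with $t-E[Z]$ replacing $E[Z]-t$ and $q_r, Q_r, I_r$ replacing $q_l, Q_l, I_l$.

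For (c), the Stein equation (\ref{gensteineq2}), valid on all of $\R$, can be rewritten as
\[
g_h'(x)=\frac{\htilde(x)I_l(x)-\ga(x)L(x)}{\eta(x)I_l(x)},\qquad L(x):=I_l(x)g_h(x)=\int_a^x\htilde(t)q_l(t)\,dt.
\]
Integration by parts on $L(x)$ with $u=\htilde$, $dv=q_l\,dt$ (so $v=Q_l$), using $Q_l(a)=0$ and $\htilde'=h'$ almost everywhere from Lipschitz continuity, yields $L(x)=\htilde(x)Q_l(x)-\int_a^x h'(t)Q_l(t)\,dt$. Substituting produces the clean decomposition
\[
\htilde(x)I_l(x)-\ga(x)L(x)=H_l(x)\htilde(x)+\ga(x)\int_a^x h'(t)Q_l(t)\,dt,
\]
where $H_l(x):=I_l(x)-\ga(x)Q_l(x)=\int_a^x\bigl(\ga(t)-\ga(x)\bigr)q_l(t)\,dt$.

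To finish (c), apply the triangle inequality, bound $|\htilde(x)|\le\fnorm{h'}(E[Z]-x)$ as in (a), and use $|h'|\le\fnorm{h'}$. Since $\ga$ is strictly decreasing and $q_l<0$ on $(-\infty,a)$, the integrand in the representation of $H_l$ is positive on $(x,a)$, giving $H_l(x)<0$ and hence $|H_l(x)|=\ga(x)Q_l(x)-I_l(x)$. A second integration by parts with $u=Q_l$, $dv=dt$ and $Q_l(a)=0$ transforms $\int_x^a Q_l(t)\,dt$ into $-xQ_l(x)+\int_a^x t q_l(t)\,dt$; dividing by $|\eta(x)I_l(x)|=-\eta(x)I_l(x)$ produces the stated bound. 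Part (d) is treated the same way with $L_r, Q_r, H_r$, using $|\htilde(x)|\le\fnorm{h'}(x-E[Z])$ for $x>b$ and deducing $H_r(x)<0$ on $(b,\infty)$ by the same monotonicity argument. I expect the main obstacle to be not conceptual but bookkeeping: outside $\abquer$ the quantities $\eta, q_l, q_r, H_l, H_r$ carry signs opposite to their interior analogues and the negatively oriented integrals must be tracked with care. Once the integration-by-parts identity for $L$ is in place, what remains is essentially the Lipschitz-plus-IBP scheme already used for Proposition \ref{genprop4}.
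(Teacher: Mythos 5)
Your proof is correct, and it reaches the stated bounds by a noticeably more direct route than the paper's. For parts (a) and (b) the paper first establishes the Fubini-based representation in Lemma~\ref{outlemma2} ($\htilde(x)=-\int_x^b(1-F(s))h'(s)\,ds$ and the corresponding two-term formula for $g_h$), and then invokes Lemma~\ref{genlemma1} and Lemma~\ref{outlemma1}(a) to collapse the resulting integrals; you instead exploit that on $(-\infty,a)$ one has $Z>t$ a.s., so $|\htilde(t)|\le\fnorm{h'}(E[Z]-t)$ follows from a one-line Lipschitz estimate, after which a single integral computation gives the bound. This sidesteps Lemma~\ref{outlemma2} altogether and is genuinely leaner. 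For (c) and (d) the two arguments converge in spirit: the paper again substitutes the Lemma~\ref{outlemma2} representation into the rearranged Stein equation, whereas you perform an explicit integration by parts on $L=I_l g_h$ to arrive at the decomposition $\htilde I_l-\ga L=H_l\,\htilde+\ga\int_a^x h'Q_l$, and then identify $|H_l(x)|=\ga(x)Q_l(x)-I_l(x)$ via the monotonicity of $\ga$ (which is exactly the content of Lemma~\ref{outlemma1}(a), reproved on the fly) before a second integration by parts recovers $\int_x^aQ_l$. The two routes produce identical terms; what your version buys is that the positivity of $\ga(x)Q_l(x)-I_l(x)$ and the value of $\int_x^aQ_l$ emerge as by-products of the IBP scheme rather than as separate lemmas, and the $\htilde$-bound for (a)/(b) is visibly simpler. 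What it costs is that you must separately recall $I_l(x)=\int_a^x\ga q_l$ (Proposition~\ref{propIout}) to get your representation of $H_l$, a fact the paper's proof does not need since it builds $H_l$ implicitly. All the sign bookkeeping you flag as the main danger ($\eta<0$, $q_l<0$, $Q_l>0$, $H_l<0$ on $(-\infty,a)$) checks out.
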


\begin{cor}\label{outcor1}
Assume Conditions \ref{gencond3}, \ref{condout1}, \ref{condout2}, \ref{condout3} and \ref{condout4} and that 
$\ga(x)=c\bigl(E[Z]-x\bigr)$ for some $c>0$. Then, for any Lipschitz-continuous test function $h:\R\rightarrow\R$ one has the following bounds:
\begin{enumerate}[{\normalfont (a)}]
\item $\fnorm{g_h}\leq\frac{\fnorm{h'}}{c}$
\item For each $x\in(-\infty,a)$ we have

\[\abs{g_h'(x)}\leq2\fnorm{h'}\frac{\ga(x)\Bigl(-xQ_l(x)+\int_a^xtq_l(t)dt\Bigr)}{-\eta(x)I_l(x)}\,.\]

\item For each $x\in(b,\infty)$ we have

\[\abs{g_h'(x)}\leq2\fnorm{h'}\frac{\ga(x)\Bigl(xQ_r(x)-\int_b^xtq_r(t)dt\Bigr)}{-\eta(x)I_l(x)}\,.\]

\end{enumerate}
\end{cor}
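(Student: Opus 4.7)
The plan is to reduce everything to Propositions \ref{propout4} and \ref{propIout} combined with Corollary \ref{gencor2}(a), via a single algebraic identity that exploits the linear form $\ga(x) = c(E[Z]-x)$. Specifically, for $x\in(-\infty,a)$, applying Proposition \ref{propIout}(a) gives
\[
Q_l(x)\,E[Z] - \int_a^x s\,q_l(s)\,ds \;=\; \int_a^x (E[Z]-s)\,q_l(s)\,ds \;=\; \frac{1}{c}\int_a^x \ga(s)\,q_l(s)\,ds \;=\; \frac{I_l(x)}{c},
\]
and symmetrically on $(b,\infty)$ using Proposition \ref{propIout}(b). This identity is the workhorse for all three claims.

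For part (a), I would combine three regions. On $(a,b)$, the bound $\fnorm{g_h}\leq\fnorm{h'}/c$ is exactly Corollary \ref{gencor2}(a). On $(-\infty,a)$, inserting the identity above into Proposition \ref{propout4}(a) collapses the right-hand side to $\fnorm{h'}\cdot(I_l(x)/c)/I_l(x) = \fnorm{h'}/c$, and the argument on $(b,\infty)$ is entirely symmetric via Proposition \ref{propout4}(b). Continuity of $g_h$ at $a$ and $b$, established in Proposition \ref{propout1}, then extends the bound to all of $\R$.

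For part (b), I would start from the bound in Proposition \ref{propout4}(c) and show that the two summands inside the parenthesis are in fact equal. Writing $E[Z] - x = \ga(x)/c$, the second summand becomes $\frac{\ga(x)}{c}\bigl(Q_l(x)\ga(x) - I_l(x)\bigr)$. Expanding $Q_l(x)\ga(x) = c\,Q_l(x)E[Z] - cx\,Q_l(x)$ and substituting the identity above for $Q_l(x)E[Z]$ yields $Q_l(x)\ga(x) - I_l(x) = c\bigl(-xQ_l(x) + \int_a^x s\,q_l(s)\,ds\bigr)$, so the second summand equals $\ga(x)\bigl(-xQ_l(x) + \int_a^x s\,q_l(s)\,ds\bigr)$, which matches the first. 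The bound in Proposition \ref{propout4}(c) therefore simplifies to twice the first summand divided by $-\eta(x)\,I_l(x)$, which is the claim. Part (c) follows by the identical computation with $l$ replaced by $r$, using Proposition \ref{propout4}(d) and the second form of the key identity.

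The main obstacle is bookkeeping rather than analysis: one must keep signs straight, noting in particular that $\eta<0$ on $\R\setminus\abquer$ by Proposition \ref{propout2}, so $q_l<0$ and $Q_l(x)>0$ on $(-\infty,a)$, which ensures that the expressions on the right-hand sides of (b) and (c) are non-negative so that the bounds are meaningful. No further estimates are required beyond the displayed identity.
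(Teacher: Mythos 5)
Your proposal is correct and follows essentially the same route as the paper's own proof: the key algebraic identity $Q_l(x)E[Z]-\int_a^x s\,q_l(s)\,ds = I_l(x)/c$ (and its mirror on $(b,\infty)$) is exactly relation (\ref{Igas}) in the paper, and the reduction of Proposition \ref{propout4}(c) by showing the two summands coincide is the paper's computation rearranged cosmetically. The extra remarks on sign conventions and continuity at $a,b$ are harmless elaborations of steps the paper leaves implicit.
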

Again, the proof is in the appendix. \\
Next, we will give a short account of the properties of the solutions $g_z$ corresponding to the test functions 
$h_z:=1_{(-\infty,z]}$, $z\in\R$, yielding the Kolmogorov distance. In general, these functions are only of interest for 
$z\in(a,b)$ since for any real-valued random variable $W$ and any $z\leq a$ we have

\[\Bigl|P(W\leq z)-P(Z\leq z)\Bigr|=P(W\leq z)\]

and for any $z\geq b$ we have
\[\Bigl|P(W\leq z)-P(Z\leq z)\Bigr|=1-P(W\leq z)=P(W>z)\,.\]

These probabilities have nothing to do with the distribution $\mu$ of $Z$ and hence, one should be able to bound them as well directly for a given $W$. Thus, we will focus on $z\in(a,b)$. %From now on we fix such a $z$.  

\begin{prop}\label{genpropgz}
Let $z\in(a,b)$ be given. Then, under the Conditions \ref{gencond1}, \ref{condout1}, \ref{condout2} and \ref{condout3} we have:

\[g_z(x)=
\begin{cases}
\frac{(1-F(z))Q_l(x)}{I_l(x)}&,\,x< a\\
\frac{1-F(z)}{\ga(a)}&,\,x=a\\
\frac{F(x)(1-F(z))}{I(x)}&,\,a<x\leq z\\
\frac{F(z)(1-F(x))}{I(x)}&,\,z<x<b\\
\frac{-F(z)}{\ga(b)}&,\, x=b\\
\frac{-F(z)Q_r(x)}{I_r(x)}&,\, x>b
\end{cases}
\]

Furthermore, one has the bounds $\fnorm{g_z}=\frac{F(z)\bigl(1-F(z)\bigr)}{I(z)}$ and also\\ 
$\sup_{z\in(a,b)}\fnorm{g_z}<\infty$.
The functions $g_z$ are absolutely continuous on every compact sub-interval of $\R$ and

\[g_z'(x)=
\begin{cases}
\frac{-(1-F(z))q_l(x)(\ga(x)Q_l(x)-I_l(x))}{I_l(x)^2}&,\,x< a\\
%\frac{1}{\ga(a)}&,\,x=a\\
\frac{(1-F(z))p(x)(I(x)-\ga(x)F(x))}{I(x)^2}&,\,a<x<z\\
\frac{-F(z)p(x)(I(x)+\ga(x)(1-F(x)))}{I(x)^2}&,\,z<x<b\\
%\frac{-1}{\ga(b)}&,\, x=b\\
\frac{-F(z)q_r(x)(I_r(x)-\ga(x)Q_r(x))}{I_r(x)^2}&,\, x>b
\end{cases}
\]

\end{prop}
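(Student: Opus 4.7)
The plan is to obtain the piecewise expression for $g_z$ by plugging $h_z=1_{(-\infty,z]}$ into the three solution formulas already established in the paper, then exploit monotonicity of the resulting pieces to identify $\fnorm{g_z}$, and finally differentiate to read off $g_z'$. Throughout we note $\mu(h_z)=F(z)$ and $\htilde_z=1_{(-\infty,z]}-F(z)$.

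First, for $x\in(a,z]$ I would use the left-hand representation in (\ref{gensteinsol}): on this range $h_z\equiv 1$, so $\int_a^x \htilde_z(t)p(t)dt=(1-F(z))F(x)$, giving $g_z(x)=(1-F(z))F(x)/I(x)$. For $x\in[z,b)$ the right-hand representation gives $-\int_x^b \htilde_z(t)p(t)dt=F(z)(1-F(x))$ and thus $g_z(x)=F(z)(1-F(x))/I(x)$; the two pieces match at $x=z$. Since $h_z$ is continuous at the endpoints $a$ and $b$ (as $z\in(a,b)$), Proposition \ref{propcont} pins down $g_z(a)=(1-F(z))/\ga(a)$ and $g_z(b)=-F(z)/\ga(b)$. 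For $x<a$ one has $h_z\equiv 1$, so (\ref{solformleft}) reduces to $g_z(x)=(1-F(z))Q_l(x)/I_l(x)$, and analogously for $x>b$ the function $h_z$ vanishes, so (\ref{solformright}) gives $g_z(x)=-F(z)Q_r(x)/I_r(x)$.

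For the supremum norm, the key auxiliary fact is the positivity of
\[
H(x)=I(x)-\ga(x)F(x)=\int_a^x(\ga(t)-\ga(x))p(t)dt,\quad G(x)=\int_x^b(\ga(x)-\ga(t))p(t)dt,
\]
both immediate from strict monotonicity of $\ga$ (the representation of $G$ uses Proposition \ref{genprop1}(a)). Differentiating the two inner pieces yields $g_z'(x)=(1-F(z))p(x)H(x)/I(x)^2>0$ on $(a,z)$ and $g_z'(x)=-F(z)p(x)G(x)/I(x)^2<0$ on $(z,b)$, so $g_z$ is strictly increasing on $(a,z]$ and strictly decreasing on $[z,b)$, attaining its maximum at $z$ with value $F(z)(1-F(z))/I(z)$. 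For the outside regions I would use Proposition \ref{propIout} to write $I_l(x)-\ga(x)Q_l(x)=\int_a^x(\ga(t)-\ga(x))q_l(t)dt$ and $I_r(x)-\ga(x)Q_r(x)=\int_b^x(\ga(t)-\ga(x))q_r(t)dt$; combined with $q_l,q_r<0$ (which follows from Proposition \ref{propout2}(a) since $\eta<0$ outside $\abquer$) and the monotonicity of $\ga$, both expressions are negative, so differentiation shows $g_z$ is increasing on $(-\infty,a)$ and decreasing on $(b,\infty)$. Thus the sup on each of these half-lines is attained at the endpoint, and the strict inequalities $I(z)=\int_a^z\ga(t)p(t)dt<\ga(a)F(z)$ and $I(z)=\int_z^b(-\ga(t))p(t)dt<-\ga(b)(1-F(z))$ (again from monotonicity of $\ga$) show that $(1-F(z))/\ga(a)$ and $F(z)/(-\ga(b))$ do not exceed $F(z)(1-F(z))/I(z)$, establishing the claimed value of $\fnorm{g_z}$.

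For $\sup_{z\in(a,b)}\fnorm{g_z}<\infty$, I would note that $z\mapsto F(z)(1-F(z))/I(z)$ is continuous on $(a,b)$ and apply Theorem \ref{Hospital}: since $F(z)/I(z)\to 1/\ga(a)$ as $z\searrow a$ and $(1-F(z))/I(z)\to -1/\ga(b)$ as $z\nearrow b$, the function has finite limits at both endpoints. The explicit derivative formulas follow from routine differentiation using $I'=\ga p$ and, via the chain rule applied to $I_l=\exp F_l$ and $I_r=\exp F_r$, the relations $I_l'=\ga q_l$ and $I_r'=\ga q_r$. Absolute continuity of $g_z$ on compact sub-intervals of $\R$ follows from its global continuity together with local boundedness of $g_z'$, which one checks at the potentially delicate points $a,b$ via the quadratic vanishing of $H,G,I_l-\ga Q_l,I_r-\ga Q_r$ there. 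The main obstacle is the careful sign bookkeeping outside $\abquer$, where one must lean on Proposition \ref{propout2} to know $\eta<0$ (and hence $q_l,q_r<0$) in order to conclude the desired monotonicity on the outer half-lines.
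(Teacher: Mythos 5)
Your proof is correct and follows essentially the same route as the paper's: substitute $h_z=1_{(-\infty,z]}$ into the three solution formulas, use the positivity of $H$ and $G$ on $(a,b)$ and the sign of $\ga Q_l-I_l$ and $\ga Q_r-I_r$ off $\abquer$ (your integral representations are exactly the content of Lemma \ref{outlemma1}) to get monotonicity of each piece, and pin down boundary values and the $z$-uniformity via de l'H\^{o}pital. The only cosmetic differences are that the paper works with the auxiliary functions $M=F/I$ and $N=(1-F)/I$ instead of differentiating directly, and leaves the comparison of the outer suprema with $g_z(z)$ implicit (via continuity and piecewise monotonicity) rather than deriving your explicit inequalities $I(z)<\ga(a)F(z)$ and $I(z)<-\ga(b)(1-F(z))$; like the paper, you only sketch the absolute-continuity assertion.
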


\begin{remark}\label{outrem2}
\begin{enumerate}[(i)]
\item It is clear, that a similar discussion of the solutions $g_z$ is possible, if $a=-\infty$ or $b=\infty$. 
\item Note, that we can write $g_z'(x)=\frac{(1-F(z))p(x)H(x)}{I(x)^2}$ for $x\in(a,z)$ and\\ 
$g_z'(x)=\frac{-F(z)p(x)G(x)}{I(x)^2}$ for $x\in(z,b)$, with the functions $H$ and $G$ from Proposition \ref{genprop4}. For concrete distributions one may often prove, that $g_z'$ is increasing on $(a,z)$ and decreasing on $(z,b)$, but this seems to be hard to prove in generality, if it is true at all.   
\end{enumerate}
\end{remark}

Finally, in our general setting, we will prove suitable ``plug-in theorems'' for exchangeable pairs satisfying our general regression property (\ref{genreg}). As was observed in \cite{Ro08} for the normal distribution, in case of univariate distributional approximations, one does not need the full strength of exchangeability, but equality in distribution of the random variables $W$ and $W'$ is sufficient. This may allow for a greater choice of admissible couplings in several situations, or at least, relaxes the verification of asserted properties.\\

In the following, let $(\Om,\A,P)$ be a probability space and let $W,W'$ be real-valued random variables defined on this space such that $W\stackrel{\D}{=}W'$. Let, as before, $\mu$ be our target distribution with support $\abquer$ fulfilling Condition \ref{gencond1} . From now on we will assume, that the random variables $W$ and $W'$ only have values in an interval $I\supseteq(a,b)$ where both functions $\eta$ and $\ga$ are defined (recall that it might be the case that $\eta$ can only be defined on $(a,b)$).

%From now on, let $I:=\abquer$ if we know, that $\supp\bigl(\calL(W)\bigr)\subseteq\abquer$ and otherwise, let $I:=\R$. 

\begin{prop}\label{genpluginprop1}
Assume, that $\ga$ satisfies Condition \ref{gencond2} and that $W$ is square integrable with $E[\abs{\ga(W)}]<\infty$. Furthermore, let the pair $(W,W')$ satisfy the general regression property (\ref{genreg}). Let $f:I\rightarrow\R$ be an absolutely continuous function with $\fnorm{f},\,\fnorm{f'}<\infty$, where $f'$ is a given, Borel-measurable version of the derivative of $f$. Then we have

\begin{eqnarray}\label{genplugin1}
&&\Bigl|E\bigl[\eta(W)f'(W)+\ga(W)f(W)\bigr]\Bigr|\nonumber\\
&\leq&\fnorm{f'}E\Bigl[\bigl|\eta(W)-\frac{1}{2\la}E\bigl[(W'-W)^2|W\bigr]\bigr|\Bigr]\nonumber\\
&&+\frac{1}{\la}E\Bigl[(W'-W)^2\int_0^1(1-s)\bigl|f'(W)-f'\bigl((W+s(W'-W)\bigr)\bigr|ds\Bigr]\nonumber\\
&&+\frac{\fnorm{f}}{\la}E\bigl[\abs{R}\bigr]\,.
\end{eqnarray}

If $f'$ is also absolutely continuous and $\fnorm{f''}<\infty$ for some Borel-measurable version $f''$ of the second derivative, then we also have the bound 

\begin{eqnarray}\label{genplugin2}
&&\Bigl|E\bigl[\eta(W)f'(W)+\ga(W)f(W)\bigr]\Bigr|\nonumber\\
&\leq&\fnorm{f'}E\Bigl[\bigl|\eta(W)-\frac{1}{2\la}E\bigl[(W'-W)^2|W\bigr]\bigr|\Bigr]\nonumber\\
&&+\frac{\fnorm{f''}}{6\la}E\bigl[\abs{W'-W}^3\bigr]\nonumber\\
&&+\frac{\fnorm{f}}{\la}E\bigl[\abs{R}\bigr]\,.
\end{eqnarray}

\end{prop}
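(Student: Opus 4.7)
The plan is to follow the standard Stein's method exchangeable-pairs template, adapted to the generalized regression property (\ref{genreg}). The key preliminary identity is the antisymmetry
\[
E\bigl[(f(W)+f(W'))(W'-W)\bigr] = 0,
\]
which follows from $W\stackrel{\D}{=}W'$ (via a R\"ollin-type symmetrization of the pair if literal exchangeability is not assumed), and which rearranges to
\[
E[f(W)(W'-W)] = -\tfrac{1}{2}\,E\bigl[(f(W')-f(W))(W'-W)\bigr].
\]
Combining this with the conditional form $E[f(W)(W'-W)] = \la E[\ga(W)f(W)] + E[f(W)R]$ coming from (\ref{genreg}) yields an exact formula for $\la E[\ga(W)f(W)]$ in terms of a Taylor-type remainder and the error $E[f(W)R]$.

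Setting $d := W'-W$ and expanding $(f(W')-f(W))d$ via the fundamental theorem of calculus gives
\[
(f(W')-f(W))\,d = d^2 f'(W) + d^2 \int_0^1 \bigl(f'(W+sd)-f'(W)\bigr)\,ds.
\]
Adding $\la E[\eta(W)f'(W)]$ to both sides of the earlier identity and using $E[d^2 f'(W)] = E[f'(W)\,E[d^2\mid W]]$, the two main pieces collapse into the single discrepancy $E[f'(W)(\la\eta(W)-\tfrac{1}{2}E[d^2\mid W])]$, leaving
\[
\la E[\eta(W)f'(W)+\ga(W)f(W)] = E\bigl[f'(W)(\la\eta(W) - \tfrac{1}{2}E[d^2\mid W])\bigr] - \tfrac{1}{2}E\Bigl[d^2 \textstyle\int_0^1(f'(W+sd)-f'(W))\,ds\Bigr] - E[f(W)R].
\]
Dividing by $\la$ and applying the triangle inequality produces a bound of exactly the shape of (\ref{genplugin1}); the refined $(1-s)$-weighted form of the remainder with prefactor $1/\la$ is obtained from a Stieltjes-type integration by parts, based on the identity
\[
\int_0^1 \bigl(f'(W+sd)-f'(W)\bigr)\,ds = \int_0^1 (1-s)\,df'(W+sd),
\]
together with a careful accounting of the constant in front.

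When $f'$ is additionally absolutely continuous with bounded version $f''$, the Taylor remainder takes the stronger pointwise form $f(W+d)-f(W)-df'(W) = d^2\int_0^1(1-s)f''(W+sd)\,ds$. The estimate $|f'(W)-f'(W+sd)| \leq s|d|\fnorm{f''}$ then converts the $(1-s)$-weighted integral in (\ref{genplugin1}) into
\[
\tfrac{1}{\la}\,E\Bigl[d^2 \textstyle\int_0^1 (1-s)\,s|d|\,\fnorm{f''}\,ds\Bigr] = \tfrac{\fnorm{f''}}{\la}\,E[|d|^3]\int_0^1 s(1-s)\,ds = \tfrac{\fnorm{f''}}{6\la}\,E[|d|^3],
\]
which is exactly the middle term of (\ref{genplugin2}). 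The main technical obstacle is precisely this refinement from the naive weight-$1$ form (which yields the constant $1/4$) to the weight-$(1-s)$ form (yielding the sharper $1/6$); once the algebraic bookkeeping and the Stieltjes integration by parts are in place, integrability of every expectation involved is immediate from the hypotheses $\fnorm{f},\fnorm{f'}<\infty$, the square-integrability of $W$, and $E[|\ga(W)|]<\infty$, so that the invocations of Fubini's theorem and of conditional expectations are routine.
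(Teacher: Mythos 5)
Your proof takes a genuinely different route from the paper, but it has two real gaps.

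First, the antisymmetry identity $E\bigl[(f(W)+f(W'))(W'-W)\bigr]=0$ is \emph{not} implied by $W\stackrel{\D}{=}W'$ alone. Expanding and using $E[f(W')W']=E[f(W)W]$, the left side reduces to $E[f(W)W']-E[f(W')W]$, which vanishes precisely under the joint symmetry $(W,W')\stackrel{\D}{=}(W',W)$, i.e.\ exchangeability. The proposition, however, only assumes $W\stackrel{\D}{=}W'$, following the observation of R\"ollin \cite{Ro08} that the paper cites just before this proposition. Your invocation of a ``R\"ollin-type symmetrization'' inverts R\"ollin's actual contribution: his point is that one can \emph{avoid} the antisymmetric identity entirely by working with a primitive $G(x)=\int_{x_0}^x f(y)\,dy$ and the single-marginal identity $E[G(W')]-E[G(W)]=0$, which is exactly what the paper does. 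Symmetrizing the joint law of $(W,W')$ to force exchangeability is not available here, since it does not preserve the regression property $E[W'-W\,|\,W]=\la\ga(W)+R$.

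Second, even granting exchangeability, your decomposition produces the remainder term $\tfrac{1}{2\la}E\bigl[d^2\int_0^1|f'(W+sd)-f'(W)|\,ds\bigr]$ with $d=W'-W$, and this is not ``exactly the shape of (\ref{genplugin1})'': the target bound has coefficient $1/\la$ (not $1/(2\la)$) and the extra weight $(1-s)$ inside. The Stieltjes-type identity you invoke, $\int_0^1(f'(W+sd)-f'(W))\,ds=\int_0^1(1-s)\,df'(W+sd)$, is correct, but the right side is a Stieltjes integral against the measure $df'(W+sd)=d\,f''(W+sd)\,ds$, not a Lebesgue integral against $(f'(W+sd)-f'(W))\,ds$; there is no way to recast it as $2\int_0^1(1-s)(f'(W+sd)-f'(W))\,ds$, and after taking absolute values the two bounds are genuinely incomparable. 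Concretely, pushing your form through to the twice-differentiable case gives $\tfrac{1}{2\la}E\bigl[|d|^3\int_0^1(1-s)|f''(W+sd)|\,ds\bigr]\le\tfrac{\fnorm{f''}}{4\la}E[|d|^3]$, a constant $1/4$, not the stated $1/6$. The paper obtains the $(1-s)$ weight and the coefficient $1/\la$ at the source, by using Taylor's formula with integral remainder for $G(W')-G(W)$; that single step simultaneously eliminates the need for exchangeability and yields the sharper constant.

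Both gaps are structural rather than cosmetic: to prove the proposition as stated, you need the $G$-based argument (or something equivalent that works under mere marginal equality) and the second-order Taylor remainder, not the first-order telescoping of $f$ along the segment plus a post-hoc reweighting.
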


\begin{proof}
Let $x_0$ be as in Condition \ref{gencond2} and define the function $G:I\rightarrow\R$ by $G(x):=\int_{x_0}^xf(y)dy$. Then, by a suitable version of Taylor's formula, for each $x,x'\in I$ we have

\begin{eqnarray*}
G(x')-G(x)&=&G'(x)(x'-x)+\int_x^{x'}(x'-t)G''(t)dt\\
&=&f(x)(x'-x)+\int_x^{x'}(x'-t)f'(t)dt\\
&=&f(x)(x'-x)+(x'-x)^2\int_0^1(1-s)f'\bigl(x+s(x'-x)\bigr)ds\,.
\end{eqnarray*}

Hence, by distributional equality, we obtain

\begin{eqnarray*}
&&0=E\bigl[G(W')\bigr]-E\bigl[G(W)\bigr]\\
&=&E\bigr[f(W)(W'-W)\bigr]+E\Bigl[(W'-W)^2\int_0^1(1-s)f'\bigl((W+s(W'-W)\bigr)ds\Bigr]\\
&=&E\Bigr[f(W)E\bigl[W'-W|W\bigr]\Bigr]+E\Bigl[(W'-W)^2\int_0^1(1-s)f'\bigl((W+s(W'-W)\bigr)ds\Bigr]\\
&=&\la E\bigl[f(W)\ga(W)\bigr]+E\bigl[f(W)R\bigr]\\
&&+E\Bigl[(W'-W)^2\int_0^1(1-s)f'\bigl((W+s(W'-W)\bigr)ds\Bigr]\,,
\end{eqnarray*}

yielding 

\begin{equation}\label{genplugin1eq1}
E\bigl[f(W)\ga(W)\bigr]=-\frac{1}{\la}E\Bigl[(W'-W)^2\int_0^1(1-s)f'\bigl((W+s(W'-W)\bigr)ds\Bigr]-\frac{1}{\la}E\bigl[f(W)R\bigr].
\end{equation}

This immediately implies the identity

\begin{eqnarray}\label{genplugin1eq2}
&&E\bigl[\eta(W)f'(W)+\ga(W)f(W)\bigr]\nonumber\\
&=&E\Bigr[f'(W)\bigl(\eta(W)-\frac{1}{2\la}(W'-W)^2\bigr)\Bigr]\nonumber\\
&&+\frac{1}{\la}E\Bigl[(W'-W)^2\int_0^1(1-s)\Bigl(f'(W)-f'\bigl((W+s(W'-W)\bigr)\Bigr)ds\Bigr]\nonumber\\
&&-\frac{1}{\la}E\bigl[f(W)R\bigr]\,.
\end{eqnarray}

From (\ref{genplugin1eq2}) and the assumptions on $f$ the bound (\ref{genplugin1}) now easily follows. To prove 
(\ref{genplugin2}) it suffices to observe that 

\[\Bigl| f'(W)-f'\bigl((W+s(W'-W)\bigr)\Bigr|\leq\fnorm{f''}\, s\abs{W'-W}\]

and $\int_0^1s(1-s)ds=\frac{1}{6}$.
\end{proof}

\begin{remark}
\begin{enumerate}[(i)]
\item From the first term on the right hand side of (\ref{genplugin1}) we see, that the bound can only be useful, if 
$E\bigl[(W'-W)^2|W\bigr]\approx 2\la \eta(W)$. Similarly, the third term reveals, that, indeed, $R$ should be of smaller order than $\la$.

\item The proof shows, that Proposition \ref{genpluginprop1} can easily be generalized to the situation, where there is a $\sigma$-algebra $\F$ with 
$\sigma(W)\subseteq\F\subseteq\A$ and the more general regression property 

\begin{equation}\label{genreg2}
E\bigl[W'-W|\F\bigr]=\la\ga(W)+R
\end{equation}
with some $\F$-measurable remainder term $R$ is satisfied. 

\item If $\calH$ is some class of test functions, such that there are finite, positive constants $c_0$, $c_1$ and $c_2$ with
$\fnorm{g_h}\leq c_0$, $\fnorm{g_h'}\leq c_1$ and $\fnorm{g_h''}\leq c_2$ for each $h\in\calH$, then (\ref{genplugin2}) 
immediately yields a bound on the distance

\[d_{\calH}\bigl(\mu,\calL(W)\bigr)=\sup_{h\in\calH}\Bigl|E\bigl[h(W)\bigr]-E\bigl[h(Z)\bigr]\Bigr|\,.\] 

\end{enumerate}
\end{remark}

\section{Stein's method for Beta distributions}\label{mainresultsbeta}

In this section we will specialize the abstract theory from section \ref{mainresultsgen} to the family of Beta distributions. 
This is not the first time, that Stein's method for Beta distributions is considered. In \cite{Sch01} the Stein equation is found, but no bounds on the solutions are given. Specialized to the Beta distributions, the general results from the papers \cite{KuTu12} and \cite{EdViq} yield useful bounds on the Stein solutions and their first derivatives. Our bounds are qualitatively comparable to those from \cite{EdViq}.\\ 
The class of Beta distributions is rather large including as special cases the arcsine distribution, the semi-circle law and the uniform distribution over an interval. Usually, the family of Beta distributions is defined as a family of distributions on $[0,1]$ depending on two parameters $a,b>0$. Recall, that for $a,b>0$, the \textit{Beta function} $B(a,b):=\int_0^1x^{a-1}(1-x)^{b-1}dx$ is well-defined and hence, the function  $q_{a,b}(x):=\frac{1}{B(a,b)}x^{a-1}(1-x)^{b-1}1_{(0,1)}(x)$ is a probability density function. The corresponding distribution $\nu_{a,b}$ is called the \textit{Beta distribution} on $[0,1]$ corresponding to the parameters $a$ and $b$. \\
However, in some cases, including the semicircular law, it is more convenient to consider the corresponding distribution on $[-1,1]$. Using the substitution rule, it is a matter of routine to check, that for $\al,\be>-1$ the function $p_{\alpha,\beta}(x):=C(\al,\be)(1-x)^\al(1+x)^\be 1_{(-1,1)}(x)$ is also a probability density function, where $C(\al,\be):=\frac{1}{B(\al+1,\be+1)2^{\al+\be+1}}=\frac{\Gamma(\al+\be+2)}{2^{\al+\be+1}\Gamma(\al+1)\Gamma(\be+1)}$. Here , we have used the well-known relation $B(a,b)=\frac{\Gamma(a)\Gamma(b)}{\Gamma(a+b)}$ between the Beta function and \textit{Euler's Gamma function}. The  probability distribution corresponding to the density $p_{\al,\be}$ will be denoted by $\mu_{\al,\be}$ and is called the \textit{Beta distribution} on $[-1,1]$ corresponding to the parameters $\al$ and $\be$. We denote its distribution function by $F\albe$. Thus, $F\albe(x)=\int_{-\infty}^x p\albe(t)dt$, $x\in\R$. One can easily see that if $X\sim \nu_{a,b}$ and if we define 
$Y:=2X-1$, then $Y\sim\mu_{b-1,a-1}$. Conversely, if $Y\sim\mu_{\al,\be}$ and if $X:=\frac{Y+1}{2}$, then $X\sim\nu_{\be+1,\al+1}$. As this transformation is merely a matter of translation and scaling it will only be necessary to develop the theory for one of these intervals. For definiteness we choose the interval $[-1,1]$ but will later on transfer our theory to the corresponding distributions on $[0,1]$.\\

From now on, fix $\al,\be>-1$. Now, we introduce a Stein identity for the Beta distribution $\mu_{\al,\be}$. It is easily checked, that its density $p_{\al,\be}$ satisfies the ordinary differential equation 

\begin{equation}\label{DGLDichteBeta}
0=(x^2-1)p_{\al,\be}'(x)-(\al+\be)xp_{\al,\be}(x)+(\be-\al)p_{\al,\be}(x)=:Af(x)
\end{equation} 

Integrating by parts one obtains the conjugate operator $L:=A^{*}$, which is defined by the equation 
$<Af,g>_{L^2}=<f,A^*g>_{L^2}$, and which is known to serve as a characterizing operator for the distribution $\mu_{\al,\be}$. To be concrete, in our case we have 

 \[Lg(x)=(1-x^2)g'(x)-(\al+\be+2)xg(x)+(\be-\al)g(x)\]

for smooth enough functions $g$, yielding the Stein identity 

\[E\Bigl[(1-Z^2)g'(Z)-\bigl[(\al+\be+2)Z+(\al-\be)\bigl]g(Z)\Bigr]=0\,,\]

where $Z\sim\mu_{\al,\be}$.

Next we want to show that the operator $L$ characterizes the distribution $\mu_{\al,\be}$. To do so, we introduce the function $\rho_{\al,\be}(x):=C(\al,\be)(1-x)^{\al+1}(1+x)^{\be+1}1_{(-1,1)}(x)=(1-x^2)p_{\al,\be}(x)$ and the class $\K_{\al,\be}$ consisting of all continuous and piecewise continuously differentiable functions $g:\R\rightarrow\R$ vanishing at infinity with\\
 $\int_\R |g'(x)|\rho_{\al,\be}(x)dx<\infty$. 
Our observations lead to the following proposition.

\begin{prop}[Stein characterization for $\mu_{\al,\be}$]\label{charbeta}
A real-valued random variable is distributed according to $\mu_{\al,\be}$ if and only if for all functions $g\in\K_{\al,\be}$ the expected values $E[(1-X^2)g'(X)]$ and $E[(\al+\be+2)Xg(X)+(\al-\be)g(X)]$ exist and coincide.
\end{prop}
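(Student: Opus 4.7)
The statement has two directions.

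For the "only if" direction, suppose $X\sim\mu_{\al,\be}$ and fix $g\in\K_{\al,\be}$. Both expectations exist: the first because $E\bigl[(1-X^2)|g'(X)|\bigr] = \int_{-1}^1|g'(x)|\rho_{\al,\be}(x)dx < \infty$ is exactly the defining integrability condition for $\K_{\al,\be}$; the second because $g$ is bounded (continuous on $\R$ and vanishing at infinity) and the linear factor is bounded on $[-1,1]$. The identity itself is a single integration by parts on $[-1,1]$: since $\rho_{\al,\be}(x) = C(\al,\be)(1-x)^{\al+1}(1+x)^{\be+1}$ vanishes at $\pm 1$ (because $\al+1,\be+1>0$), the boundary terms are zero, and a direct differentiation gives $\rho_{\al,\be}'(x) = -[(\al+\be+2)x+(\al-\be)]p_{\al,\be}(x)$, which converts $\int g'\rho_{\al,\be}$ into the desired $\int g\cdot[(\al+\be+2)x+(\al-\be)]p_{\al,\be}$.

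For the "if" direction, suppose the identity holds for every $g\in\K_{\al,\be}$. For each $z\in(-1,1)$ I would take $h_z := 1_{(-\infty,z]}$ and form the Stein solution $g_z$ from Proposition \ref{genpropgz}, extended to $\R$ via Proposition \ref{propout1}. The outer objects become explicit thanks to the partial-fraction expansion $\ga(x)/\eta(x) = (\al+1)/(x-1) + (\be+1)/(x+1)$, which yields $I_l(x) = (1-x)^{\al+1}(-1-x)^{\be+1}$ on $(-\infty,-1)$ and its symmetric counterpart on $(1,\infty)$; Conditions \ref{condout1}--\ref{condout4} are then easily verified for the Beta case. The crux is to check $g_z \in \K_{\al,\be}$: continuity on $\R$ follows from Propositions \ref{propcont} and \ref{propout1}; piecewise $C^1$ regularity is visible from the formulas, with derivative jumps only at $\pm 1$ and $z$; the bound $\int_{-1}^1|g_z'|\rho_{\al,\be}dx<\infty$ comes from rewriting the Stein equation as $(1-x^2)g_z'(x) = h_z(x)-F_{\al,\be}(z)+[(\al+\be+2)x+(\al-\be)]g_z(x)$ and using the boundedness of $g_z$ on $[-1,1]$. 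The delicate point is the decay of $g_z$ at $\pm \infty$: a change of variable $s = -1-x$ together with an l'Hospital argument in the spirit of \eqref{conta2} shows $Q_l(x)/I_l(x)\to 0$, since $I_l(x)\sim|x|^{\al+\be+2}$ dominates $Q_l(x)$, which grows like $|x|^{\al+\be+1}$ when $\al+\be>-1$ and is bounded otherwise; the right edge is handled symmetrically.

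Once $g_z\in\K_{\al,\be}$ is established, the hypothesis applied to $g_z$ gives $E\bigl[(1-X^2)g_z'(X) - ((\al+\be+2)X+(\al-\be))g_z(X)\bigr] = 0$, and by the pointwise Stein equation the integrand equals $h_z(X)-F_{\al,\be}(z)$ almost everywhere on $\R$, yielding $P(X\leq z) = F_{\al,\be}(z)$ for every $z\in(-1,1)$. Letting $z\nearrow 1$ and $z\searrow-1$ then forces $P(X\in(-1,1)) = 1$, and hence $X\sim\mu_{\al,\be}$. The decisive obstacle is thus the decay estimate for $g_z$ outside $[-1,1]$, which is what makes the hypothesis truly applicable to the Stein solution.
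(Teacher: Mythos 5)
Your proof is correct and follows essentially the same route as the paper: integration by parts on $[-1,1]$ for the forward direction, and testing the hypothesis against the Stein solutions $g_z$ (whose membership in $\K_{\al,\be}$ the paper packages as Proposition~\ref{gzk}) for the converse. The only cosmetic difference is that you restrict to $z\in(-1,1)$ and then pass to the limits $z\nearrow 1$, $z\searrow -1$ to pin down the support, whereas the paper applies the argument directly to all $z\in\R\setminus\{-1,1\}$; both are equally valid.
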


\begin{proof}
First, let $\calL(X)=\mu_{\al,\be}$ and let $g\in\K_{\al,\be}$. By the hypothesis and the transformation formula, we have
\begin{eqnarray*}
\infty&>&\int_\R|g'(x)|\rho_{\al,\be}(x)dx=C(\al,\be)\int_{-1}^1|g'(x)|(1-x)^{\al+1}(1+x)^{\be+1}dx\\
&=&C(\al,\be)\int_{-1}^1(1-x^2)|g'(x)|(1-x)^{\al}(1+x)^{\be}dx=\int_\R|(1-x^2)g'(x)|p_{\al,\be}(x)dx\\
&=&E[|(1-X^2)g'(X)|]\,.
\end{eqnarray*}
Hence the expexted value $E[(1-X^2)g'(X)]$ exists. Since $g$ is continuous, it is bounded on $[-1,1]$ and so the expected value 
$E[(\al+\be+2)Xg(X)+(\al-\be)g(X)]$ exists, too. Again, by the transformation rule and since $g$ and $\rho_{\al,\be}$ are absolutely continuous on $[-1,1]$ we can use integration by parts and have

\begin{eqnarray*}
&&E[(1-X^2)g'(X)]\\
&=&\int_\R(1-x^2)g'(x)p_{\al,\be}(x)dx=\int_{-1}^1 g'(x)\rho_{\al,\be}(x)dx\\
&=&g\rho_{\al,\be}|_{-1}^1-\int_{-1}^1 g(x)\rho_{\al,\be}'(x)dx\\
&=&0-C(\al,\be)\int_{-1}^1 g(x)[(\be+1)(1+x)^\be(1-x)^{\al+1}-(\al+1)(1-x)^\al(1+x)^{\be+1}]dx\\
&=&-\int_{-1}^1 g(x)[\be-\al-x(\be+\al+2)]C(\al,\be)(1-x)^\al(1+x)^\be dx\\
&=&\int_\R g(x)[\al-\be+x(\al+\be+2)]p_{\al,\be}(x)dx\\
&=&E[(\al+\be+2)Xg(X)+(\al-\be)g(X)]\,.
\end{eqnarray*}

For the converse fix an arbitrary $z\in\R\setminus\{-1,+1\}$ and consider the solution $g_z$ to Stein's equation 

\begin{equation}\label{steingleichungbetaindikator}
(1-x^2)g'(x)-(\al+\be+2)xg(x)+(\be-\al)g(x)=h_z(x)-\mu_{\al,\be}\left((-\infty,z]\right)\,,
\end{equation}

where $h_z:=1_{(-\infty,z]}$. It will  be shown in Proposition \ref{gzk} below that $g_z\in\K_{\al,\be}$, so that by hypothesis we have 

\begin{eqnarray*}
0&=&E[(1-X^2)g_z'(X)-(\al+\be+2)Xg(X)+(\be-\al)g(X)]\\
&=& E[h_z(X)-\mu_{\al,\be}\left((-\infty,z]\right)]\\
&=&P(X\leq z)-\mu_{\al,\be}\left((-\infty,z]\right)\,.
\end{eqnarray*}

Since $z\in\R\setminus\{-1,+1\}$ was arbitrary and by continuity from the right of distribution functions, the proof is complete.
\end{proof}

Since we have fixed the parameters $\al$ and $\be$, henceforth we may and will suppress them as sub-indices at objects which might well depend on them (for example we will simply write $p$ for $p_{\al,\be}$ and so on). As we would like to use the theory from section \ref{mainresultsgen} we have to make sure, that our Stein identity for the Beta distribution fits into this framework, i.e. that relation (\ref{etaformel}) is satisfied with $\eta(x)=1-x^2$ and 

\[\ga(x)=-(\al+\be+2)x+(\be-\al)=-[(\al+\be+2)x+(\al-\be)]=(\al+\be+2)(E[Z]-x)\,,\]

where we have used, that $E[Z]=\frac{\be-\al}{\al+\be+2}$. In principle, this is clear, because we have just established a Stein characterization for $\mu$ and given the density $p$ and the function $\ga$, the corresponding 
$\eta$ is, of course, unique. However, we give a formal proof. 

According to (\ref{etaformel}) we must show that

\begin{equation}\label{vergaeta}
(1-x^2)p(x)=\int_{-1}^x [-(\al+\be+2)t+(\be-\al)]p(t)dt
\end{equation}

holds for all $x\in(-1,1)$. First note, that 

\[\psi(x)=\frac{p'(x)}{p(x)}=\frac{-(\al+\be)x+\be-\al}{1-x^2}\,.\]

 Differentiating the left hand side of (\ref{vergaeta}), we obtain

\begin{eqnarray*}
\frac{d}{dx}(1-x^2)p(x)&=&-2xp(x)+(1-x^2)p'(x)=p(x)\bigl(-2x+(1-x^2)\psi(x)\bigr)\\
&=&p(x)[-(\al+\be+2)x+(\be-\al)]
\end{eqnarray*}

which is of course the derivative of the right hand side, too. Since 

\[\lim_{x\searrow-1}(1-x^2)p(x)=0=\lim_{x\searrow-1}\int_{-1}^x [-(\al+\be+2)t+(\be-\al)]p(t)dt\]

relation (\ref{vergaeta}) is proved. Note, that we may extend the functions $\gamma$ and $\eta$ to functions on $\R$ by just the same ``analytical'' expressions as above. Next we will show that all the conditions from section \ref{mainresultsgen} are satisfied in the special case of Beta distributions.  It is easy to see, that Conditions \ref{gencond1}, \ref{gencond2}, \ref{gencond2} and  \ref{condout1}
are satisfied by the functions $\ga$, $\eta$ and $p$. 
%To show that Condition \ref{condboundsupp} is satisfied, note that in this case

%\[\psi(x)=\frac{p'(x)}{p(x)}=\frac{-(\al+\be)x+\be-\al}{1-x^2}\]

%and hence, by l'H\^{o}pital's rule, 

%\[\lim_{x\searrow-1}\frac{F(x)}{p(x)}=\lim_{x\searrow-1}\frac{1}{\psi(x)}=
Condition \ref{condboundsupp} is  also satisfied but need not be proved, because its most important conclusion, namely that $\eta(1)=\eta(-1)=0$ is clear from the above discussion. To verify Conditions \ref{condout2}, \ref{condout3} and \ref{condout4}, we must first define the functions $F_l$ on $(-\infty,-1)$ and $F_r$ on $(1,\infty)$. We claim, that the functions 

\[F_l(x):=(\al+1)\log(1-x)+(\be+1)\log(-1-x)\,,\quad x<-1\]

and

\[F_r(x):= (\al+1)\log(x-1)+(\be+1)\log(1+x)\,,\quad x>1\]

do the job. In fact, 

\[F_l'(x)=\frac{\al+1}{1-x}(-1)+\frac{\be+1}{-1-x}(-1)=\frac{-(\al+\be+2)x+\be-\al}{1-x^2}=\frac{\ga(x)}{\eta(x)}\]

for each $x<-1$ and similarly $F_r'(x)=\frac{\ga(x)}{\eta(x)}$ for $x>1$. From these two functions we obtain the functions 
$q_l:(-\infty,-1)\rightarrow\R$ and $q_r:(1,\infty)\rightarrow\R$ defined by

\[q_l(x):=\frac{\exp(F_l(x))}{\eta(x)}=\frac{(1-x)^{\al+1}(-1-x)^{\be+1}}{1-x^2}=(1-x)^{\al}(-1-x)^{\be}\]

and

\[q_r(x):=\frac{\exp(F_r(x))}{\eta(x)}=\frac{(x-1)^{\al+1}(1+x)^{\be+1}}{1-x^2}=(x-1)^{\al}(1+x)^{\be}\,.\]

These two functions are of course locally integrable and hence, Condition \ref{condout2} is satisfied. Since 

\[\lim_{x\nearrow-1}F_l(x)=-\infty=\lim_{x\searrow 1}F_r(x)\]

and 

\[\lim_{x\to-\infty}F_l(x)=+\infty=\lim_{x\to\infty}F_r(x)\,,\]

Conditions \ref{condout3} and \ref{condout4} also hold. Consequently, all results from section \ref{mainresultsgen} are valid in particular for the case of Beta distributions. 

Now, let $h:\R\rightarrow\R$ be a given Borel-measurable test function with\\ $\int_\R \abs{h(x)}p(x)dx<\infty$, 
$\int_{x}^{-1} \abs{h(x)}(-q_l(x))dx<\infty$ for each $x<-1$ and\\ $\int_{1}^x \abs{h(x)}(-q_r(x))dx<\infty$ for each $x>1$ and let $\htilde=h-\mu(h)$. Consider the corresponding Stein equation

\begin{equation}\label{steineqbeta}
(1-x^2)g'(x)-(\al+\be+2)xg(x)+(\be-\al)g(x)=h(x)-\mu_{\al,\be}(h)=:\htilde(x)\,.
\end{equation}

 Then, according to the theory from section \ref{mainresultsgen}, the Stein solution corresponding to \eqref{steineqbeta} is given by the function $g_h:\R\rightarrow\R$ with 

\begin{eqnarray}\label{ghformelbeta}
g_h(x)= 
\begin{cases}
 \exp\bigl(-F_l(x)\bigr)\int_{-1}^x\htilde(t)q_l(t)dt\,,& x<-1\\
\frac{h(-1)-\mu(h)}{2\be+2}\,,&x=-1\\
\frac{1}{(1-x^2)p(x)}\int_{-1}^x\htilde(t)p(t)dt\,,&-1<x<1\\
\frac{h(1)-\mu(h)}{2\al+2}\,,&x=1\\
\exp\bigl(-F_r(x)\bigr)\int_{1}^x\htilde(t)q_r(t)dt\,,& x<-1\\
\end{cases}
\end{eqnarray}

Here, the values of $g_h$ at $\pm1$ are arbitrary, but they are chosen such that $g_h$ is continuous, whenever $h$ is continuous at $\pm1$. This follows immediately from Proposition \ref{propout1}.\\

The next result, which is also proved in the appendix, completes the proof of Proposition \ref{charbeta} by showing that the solution $g_z$ is in the class $\K_{\albe}$ whenever $z\not=\pm1$. 

\begin{prop}\label{gzk}
For each $z\in\R\setminus\{-1,+1\}$ the solution $g_z$ belongs to the class $\K_{\albe}$, which was defined above before Proposition \ref{charbeta}. 
\end{prop}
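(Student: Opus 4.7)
The plan is to verify in turn the four defining properties of $\K_{\al,\be}$: continuity on $\R$, piecewise continuous differentiability, vanishing at $\pm\infty$, and integrability of $|g_z'|\rho_{\al,\be}$. Continuity of $g_z$ is immediate from Proposition \ref{propout1}, whose hypotheses have been verified above for the Beta case: the test function $h_z=\mathbf{1}_{(-\infty,z]}$ is continuous at $\pm 1$ precisely because $z\neq\pm 1$. Piecewise $C^1$-smoothness is visible from the piecewise formulas of Proposition \ref{genpropgz} together with (\ref{ghformelbeta}): on each of the open intervals $(-\infty,-1)$, $(-1,z)$, $(z,1)$, and $(1,\infty)$ the formula for $g_z$ is a ratio of $C^1$ functions with non-vanishing denominator, and is therefore $C^1$ there.

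For the vanishing at infinity I would use the explicit expressions of $g_z$ outside $[-1,1]$ and apply de l'H\^opital's rule. On the right tail, $g_z(x)$ has the form (a constant) times (a $C^1$ antiderivative of $q_r$) divided by $I_r(x)$, and the denominator $I_r(x)=(x-1)^{\al+1}(1+x)^{\be+1}$ tends to $+\infty$ since $\al+1,\be+1>0$. Theorem \ref{Hospital} then applies and yields
\[\lim_{x\to\infty}\frac{Q_r(x)}{I_r(x)}=\lim_{x\to\infty}\frac{q_r(x)}{\ga(x)q_r(x)}=\lim_{x\to\infty}\frac{1}{\ga(x)}=0\,,\]
and the symmetric argument at $-\infty$, using the divergence of $I_l(x)=(1-x)^{\al+1}(-1-x)^{\be+1}$, handles the left tail.

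The decisive step is the integrability condition. Since $\rho_{\al,\be}$ is supported on $(-1,1)$, only the integral over that interval matters, and rather than bounding $g_z'$ via the explicit formulas I would exploit the Stein equation itself: almost everywhere on $(-1,1)$,
\[(1-x^2)g_z'(x)=\htilde(x)-\ga(x)g_z(x)\,.\]
Because $|\htilde|\leq 1$, $\ga$ is continuous (hence bounded) on $[-1,1]$, and $\fnorm{g_z}<\infty$ by Proposition \ref{genpropgz}, the right-hand side is uniformly bounded by some constant $M<\infty$. Hence
\[\int_{-1}^1|g_z'(x)|\rho_{\al,\be}(x)\,dx=\int_{-1}^1|g_z'(x)|(1-x^2)p(x)\,dx\leq M\int_{-1}^1p(x)\,dx=M<\infty\,,\]
and the case $z\notin[-1,1]$ is trivial since then $\htilde\equiv 0$ on $[-1,1]$ forces $g_z\equiv 0$ there. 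The step requiring the most care is the vanishing-at-infinity computation, where one must verify the hypotheses of l'H\^opital's rule; the integrability, by contrast, essentially writes itself once the Stein equation is invoked.
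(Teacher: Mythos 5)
Your proposal is correct, and in the key integrability step it takes a genuinely different and cleaner route than the paper. The paper shows $\int_{-1}^1 |g_z'|\rho_{\al,\be}\,dx<\infty$ by arguing that the continuous function $x\mapsto |g_z'(x)|(1-x^2)p(x)$ has finite (in fact zero) limits at $\pm1$, which requires explicit de l'H\^opital computations with the auxiliary function $H$. You instead read the bound directly off the Stein equation: since $\eta(x)g_z'(x)=\htilde(x)-\ga(x)g_z(x)$ almost everywhere, and $\htilde$, $\ga$, $g_z$ are all bounded on $[-1,1]$ (the last by Proposition~\ref{genpropgz}), the integrand $|g_z'|\rho_{\al,\be}=|\eta g_z'|\,p$ is bounded by a constant times $p$, which integrates to one. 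This avoids any asymptotic analysis of $g_z'$ near the endpoints and is the more natural argument. The continuity and vanishing-at-infinity parts follow the paper's lines.

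One small caveat concerns your treatment of ``piecewise continuously differentiable''. You only assert that $g_z$ is $C^1$ on each of the four \emph{open} intervals. The paper goes further and shows, again by de l'H\^opital and Lemma~\ref{betalemma1}, that $g_z'$ has finite one-sided limits at the breakpoints $\pm1$ and $z$, so that $g_z$ is in fact $C^1$ on each \emph{closed} piece. Whether this stronger statement is required depends on how strictly one reads the definition of $\K_{\al,\be}$; for the integration-by-parts argument in Proposition~\ref{charbeta} your weaker version together with the verified integrability of $g_z'\rho_{\al,\be}$ suffices, but if ``piecewise $C^1$'' is meant to forbid derivative blow-up at the breakpoints, you would need to add the boundary-limit computation. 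It may be worth a sentence pointing out that the Stein equation itself, together with the boundedness of $g_z$, already shows $g_z'$ is locally bounded away from the zeros of $\eta$, and that the finiteness of the one-sided limits at $\pm1$ is a separate (easy) check.
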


Next, we will derive some results for the solutions $g_h$ from corresponding results in Section \ref{mainresultsgen}.

\begin{prop}\label{propbeta1}
Let $h:\R\rightarrow\R$ be bounded and Borel-measurable and let $m$ be a median for $\mu$. Then, we have the bound: 

\[\fnorm{g_h}\leq\fnorm{h-\mu(h)}\max\Biggl(\frac{1}{2(1-m^2)p(m)},\,\frac{1}{2\be+2},\,\frac{1}{2\al+2}\Biggr)\] 

\end{prop}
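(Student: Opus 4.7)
The plan is simply to invoke Proposition \ref{propout3} in the Beta setting and to identify the three quantities that appear inside the maximum.

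First, I note that the preceding discussion has already verified that Conditions \ref{condout1}, \ref{condout2}, \ref{condout3} and \ref{condout4} are all satisfied for the Beta distribution $\mu_{\al,\be}$ with $a=-1$, $b=1$, so Proposition \ref{propout3} applies and gives
\[\fnorm{g_h}\leq\fnorm{h-\mu(h)}\max\Biggl(\frac{1}{2I(m)},\,\frac{1}{\ga(-1)},\,\frac{-1}{\ga(1)}\Biggr)\,.\]

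Next I would compute each of the three numbers explicitly. Since $\eta(x)=1-x^2$, the identity $I(x)=p(x)\eta(x)$ established in Section \ref{mainresultsgen} gives $I(m)=(1-m^2)p(m)$, hence $\frac{1}{2I(m)}=\frac{1}{2(1-m^2)p(m)}$. For the endpoints, using $\ga(x)=-(\al+\be+2)x+(\be-\al)$ I compute
\[\ga(-1)=(\al+\be+2)+(\be-\al)=2\be+2\quad\text{and}\quad\ga(1)=-(\al+\be+2)+(\be-\al)=-(2\al+2)\,,\]
so $\frac{1}{\ga(-1)}=\frac{1}{2\be+2}$ and $\frac{-1}{\ga(1)}=\frac{1}{2\al+2}$. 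Plugging these three values into the bound from Proposition \ref{propout3} yields the claimed inequality.

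There is essentially no obstacle here: the proposition is a direct specialization of the abstract bound to the Beta family, and all the technical conditions have been verified in the preceding paragraphs of this section. The only thing worth a short remark is that $\ga(-1)$ and $-\ga(1)$ are both strictly positive (since $\al,\be>-1$), so the reciprocals are well-defined and the maximum makes sense.
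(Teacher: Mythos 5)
Your proposal is correct and takes essentially the same approach as the paper: the paper's proof also simply cites Proposition \ref{propout3} together with the computed values $\ga(-1)=2\be+2$ and $\ga(1)=-2\al-2$. You merely spell out the identification $I(m)=(1-m^2)p(m)$ a bit more explicitly, which the paper leaves implicit.
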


\begin{proof}
Since $\ga(-1)=2\be+2$ and $\ga(1)=-2\al-2$, this immediately follows from Proposition \ref{propout3}. 
\end{proof}

\begin{prop}\label{propbeta2}
Let $h:\R\rightarrow\R$ be Lipschitz-continuous.  Then, we have the following bounds: 
\begin{enumerate}[{\normalfont (a)}]
 \item $\fnorm{g_h}\leq\frac{\fnorm{h'}}{\al+\be+2}$
 \item There exists a constant $K_1$, only depending on $\al$ and $\beta$ such that \\
  $\fnorm{g_h'}\leq K_1\fnorm{h'}$. 
\end{enumerate}
\end{prop}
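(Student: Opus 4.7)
The plan is to prove part (a) by direct appeal to the bounds already obtained in the general theory, and part (b) by a case analysis on the three regions $(-1,1)$, $(-\infty,-1)$, $(1,\infty)$, showing in each case that the bound coming from Corollary \ref{gencor2} or Corollary \ref{outcor1} is a continuous function of $x$ which extends continuously to the closure, and hence is bounded by a constant depending only on $\alpha,\beta$.

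For part (a), observe that $\gamma(x)=(\alpha+\beta+2)(E[Z]-x)=c(E[Z]-x)$ with $c:=\alpha+\beta+2$. Corollary \ref{gencor2}(a) applied on $(-1,1)$ and Corollary \ref{outcor1}(a) applied on $\R\setminus[-1,1]$ both yield $|g_h(x)|\le\fnorm{h'}/c$, and continuity of $g_h$ at $\pm1$ (Proposition \ref{propout1}) makes the bound hold on all of $\R$.

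For part (b), on $(-1,1)$ Corollary \ref{gencor2}(b) gives
\[
|g_h'(x)|\le\frac{2\fnorm{h'}}{c}\,\Psi(x),\qquad \Psi(x):=\frac{H(x)G(x)}{I(x)\eta(x)}.
\]
The function $\Psi$ is continuous and positive on $(-1,1)$. To bound it uniformly, I would show it admits finite limits at $\pm 1$. Using the explicit asymptotics $p(x)\sim C(\alpha,\beta)\,2^{\beta}(1-x)^{\alpha}$ as $x\nearrow 1$, one obtains $1-F(x)\sim\frac{C(\alpha,\beta)\,2^{\beta}}{\alpha+1}(1-x)^{\alpha+1}$ and consequently $G(x)\sim\frac{c\,C(\alpha,\beta)\,2^{\beta}}{(\alpha+1)(\alpha+2)}(1-x)^{\alpha+2}$, while $I(x)\eta(x)=(1-x^2)^2 p(x)\sim 4C(\alpha,\beta)\,2^{\beta}(1-x)^{\alpha+2}$ and $H(x)\to H(1)=c\int_{-1}^1 F(s)\,ds>0$. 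Hence
\[
\lim_{x\nearrow 1}\Psi(x)=\frac{c\,H(1)}{4(\alpha+1)(\alpha+2)}<\infty,
\]
a constant depending only on $\alpha,\beta$. The limit at $-1$ is handled symmetrically. Thus $\Psi$ extends continuously to $[-1,1]$ and is bounded there.

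For the exterior regions I would apply Corollary \ref{outcor1}(b),(c) and argue similarly: as $x\to\pm 1$ from outside, the asymptotics of $q_l,q_r$ match those of $p$ (after identifying $(-1-x)$ with $-(1+x)$ etc.), so the ratios in those bounds again have finite one-sided limits. For the behaviour at $\pm\infty$ it is cleanest not to push the Corollary's bound further but to use the Stein equation $g_h'(x)=(\tilde h(x)-\gamma(x)g_h(x))/\eta(x)$ together with part (a) and the Lipschitz estimate $|\tilde h(x)|\le\fnorm{h'}(|x|+E[|Z|])$, yielding $|g_h'(x)|\le 2\fnorm{h'}/(|x|-1)$ for $|x|\ge 2$, which is $O(1/|x|)$. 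Compactness of $[-M,-1]$ and $[1,M]$ (with $M$ chosen, say, equal to $2$) plus the finite endpoint limits yields uniform bounds outside. Taking $K_1$ to be the maximum of the three $\alpha,\beta$-dependent constants arising in the three regions (times the prefactors $2/c$ etc.) finishes the proof.

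The main obstacle will be the bookkeeping of the asymptotic expansions near $x=\pm 1$, particularly matching them from the inside and outside so that both Corollaries \ref{gencor2}(b) and \ref{outcor1}(b),(c) produce finite limits, and tracking that all arising constants indeed depend only on $\alpha$ and $\beta$ (and not, for instance, on $\fnorm{h}$ or on the chosen test function). Since the proof is deferred to the appendix, routine verification of these asymptotics using the explicit form of $p_{\alpha,\beta}$ is the essential content.
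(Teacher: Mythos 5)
Your proposal is correct and follows the same global strategy as the paper (partition $\R$ into the three regions, reduce to the bounds from Corollaries~\ref{gencor2} and~\ref{outcor1}, and show the resulting ratio is continuous with finite limits at the boundary), but the tools used to establish the limits differ. The paper invokes de l'H\^opital's rule (Theorem~\ref{Hospital}) systematically, together with Lemma~\ref{betalemma1} which packages the derivative identity $\frac{d}{dx}\bigl(\eta^k p\bigr)=\eta^{k-1}p\bigl[(k-1)\eta'+\ga\bigr]$; this computes $\lim_{x\searrow -1}S(x)=\frac{\al+\be+2}{2\be+4}$, $\lim_{x\nearrow 1}S(x)=\frac{\al+\be+2}{2\al+4}$, and $\lim_{x\to\mp\infty}S_{l/r}(x)=0$ in a uniform way. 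You instead expand $p$, $1-F$, $G$, $H$, $I\eta$ asymptotically near $x=1$ and take the ratio directly; your answer $\frac{cH(1)}{4(\al+1)(\al+2)}$ agrees with the paper's after substituting $H(1)=c(1-E[Z])=2(\al+1)$, so the computation is sound. A genuine simplification in your version is the treatment at $\pm\infty$: rather than pushing the Corollary~\ref{outcor1} bound to infinity and computing another limit by de l'H\^opital, you feed the already-proved bound $\fnorm{g_h}\le\fnorm{h'}/c$ from part~(a) back into the Stein equation $g_h'=(\htilde-\ga g_h)/\eta$ and use the linear growth of $\htilde$ and $\ga$ against the quadratic decay of $1/|\eta|$ to get $|g_h'(x)|\le 2\fnorm{h'}/(|x|-1)$; this is more elementary and avoids one application of de l'H\^opital. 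On the other hand your asymptotic-matching claim for $q_l,q_r$ near $\pm 1$ is only sketched, where the paper's Lemma~\ref{betalemma1} plus de l'H\^opital handles all three regions with the same mechanism. Both routes produce constants depending only on $\al,\be$ and are correct; the paper's is more systematic, yours is more explicit about the source of the constants and slightly more elementary at infinity.
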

The proof is in the appendix.

Now, consider a twice differentiable function $h:\R\rightarrow\R$ with bounded first and second derivative. Recall the discussion following Remark \ref{genrem4}. It easily follows from \eqref{formelptilde} that, in the case of Beta distributions, we have $\tilde{\mu}_{\al,\be}=\mu_{\al+1,\be+1}$ and we have to show that the function $\tilde{g}:=g_h'$ satisfies the Stein identity for $\mu_{\al+1,\be+1}$, i.e. for $Y\sim\mu_{\al+1,\be+1}$ we have 

\begin{equation}\label{secder1}
E\Bigl[(1-Y)^2\tilde{g}'(Y)+\bigl[-(\al+\be+4)Y+\be-\al\bigr]\tilde{g}(Y)\Bigr]=0\,.
\end{equation}

The following lemma, which is proved in the appendix, will be useful.

\begin{lemma}\label{betalemma2}
For the Beta distribution $\mu_{\al,\be}$ and a given bounded, Borel-measurable function $u:\R\rightarrow\R$, the Stein equation 

\begin{equation}\label{steineqnc}
 \eta(x)f'(x) +\ga(x)f(x)=u(x)
\end{equation}
has a bounded solution $f$ on $(-1,1)$ if and only if $E[u(Z)]=0$.  
\end{lemma}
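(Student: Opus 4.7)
The plan is to treat \eqref{steineqnc} as a first-order linear ODE on $(-1,1)$, exploiting the identity $(p\eta)'(x)=\ga(x)p(x)$, which follows upon multiplying $\eta'=\ga-\psi\eta$ by $p$. Since $\eta>0$ on $(-1,1)$, the equation is equivalent to $f'+(\ga/\eta)f=u/\eta$, and the integrating factor $p\eta$ (because $(p\eta)'/(p\eta)=\ga/\eta$) produces the general solution $f=f_u+Kf_h$ on $(-1,1)$, where
\[
f_u(x):=\frac{1}{p(x)\eta(x)}\int_{-1}^x u(t)p(t)\,dt,\qquad f_h(x):=\frac{1}{p(x)\eta(x)},\qquad K\in\R.
\]

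For sufficiency, assume $E[u(Z)]=0$ and set $h:=u$; since $\mu(h)=0$, the function $f_u$ is exactly the Stein solution $g_h$ from \eqref{ghformelbeta} restricted to $(-1,1)$. Proposition \ref{propbeta1} applied to the bounded test function $u$ immediately yields
\[
\fnorm{f_u}\le\fnorm{u-\mu(u)}\,M_{\al,\be}=\fnorm{u}\,M_{\al,\be}<\infty,
\]
where $M_{\al,\be}$ is the maximum appearing there, so $f_u$ is a bounded solution.

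For necessity, let $f$ be a bounded solution on $(-1,1)$. Because $\eta>0$ on $(-1,1)$, the equation rewrites as $f'=(u-\ga f)/\eta$, whose right-hand side is bounded on every compact sub-interval, so $f$ is absolutely continuous there. Multiplying \eqref{steineqnc} by $p$, integrating on $[-1+\eps,1-\delta]$ and integrating the $\ga p f$ term by parts via $\ga p=(p\eta)'$ yields
\begin{align*}
\int_{-1+\eps}^{1-\delta}u p\,dx
&=\int_{-1+\eps}^{1-\delta}\eta p f'\,dx+\bigl[p\eta f\bigr]_{-1+\eps}^{1-\delta}-\int_{-1+\eps}^{1-\delta}p\eta f'\,dx\\
&=\bigl[p(x)\eta(x)f(x)\bigr]_{-1+\eps}^{1-\delta}.
\end{align*}
Since $p(x)\eta(x)=C(\al,\be)(1-x)^{\al+1}(1+x)^{\be+1}\to 0$ as $x\to\pm 1$ (using $\al,\be>-1$) and $f$ is bounded, the boundary terms vanish in the limit $\eps,\delta\to 0^+$, giving $\int_{-1}^1 u(x)p(x)\,dx=0$, i.e.\ $E[u(Z)]=0$.

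The main obstacle is the boundary analysis in the necessity direction: one must verify both that $p\eta$ vanishes at $\pm 1$ (this is where the assumption $\al,\be>-1$ enters crucially) and that $f$ is absolutely continuous on compact sub-intervals (which is obtained by using positivity of $\eta$ on $(-1,1)$ to solve for $f'$ in the ODE). Once these two facts are in hand, the two $\int p\eta f'$ integrals cancel and the conclusion drops out; the sufficiency direction is essentially a direct appeal to Proposition \ref{propbeta1}.
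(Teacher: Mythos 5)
Your sufficiency direction is identical to the paper's: invoke Proposition \ref{propbeta1}. The necessity direction, however, takes a genuinely different route. The paper proves the contrapositive via the explicit solution formula: it writes every solution as
$f(x)=\bigl(\int_{-1}^x u(t)p(t)\,dt+c\bigr)/(\eta(x)p(x))$
and then shows, by direct limit computations and case analysis on $c$, that the numerator fails to vanish at one of the two endpoints while $\eta p\to 0$ there, so the ratio blows up; thus no bounded solution exists when $E[u(Z)]\neq 0$. You instead argue forward: multiply the equation by $p$, integrate over $[-1+\eps,1-\delta]$, and exploit $(p\eta)'=\ga p$ to integrate the $\ga pf$ term by parts, at which point the two $\int p\eta f'$ integrals cancel and only the boundary term $[p\eta f]$ survives. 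The conclusion then drops out from boundedness of $f$ and the vanishing of $p\eta$ at $\pm1$. This is arguably cleaner: it avoids the explicit solution formula and the case split, and it works directly with the defining ODE. The paper's argument buys you a sharper picture of \emph{how} boundedness fails (the unique candidate solution genuinely diverges at an endpoint), which is mildly more informative, but your computation establishes the if-and-only-if with less machinery.

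One small imprecision worth fixing: you justify absolute continuity of $f$ on compact subintervals by rewriting $f'=(u-\ga f)/\eta$ and noting the right-hand side is bounded there. That inference is not valid in general---boundedness of an a.e.\ derivative does not imply absolute continuity (the Cantor function has $f'=0$ a.e.\ yet is not AC). The correct resolution is that in the paper's framework a ``solution'' is, by construction, absolutely continuous on compact subintervals (this is implicit in the paper's claim that all solutions take the explicit quotient form, which also silently requires AC), so the integration by parts is legitimate under that standing convention. If you make that convention explicit rather than trying to derive AC from the equation, the argument closes cleanly.
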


Now, from Proposition \ref{propbeta2} (b) we know that $\tilde{g}$ is bounded and from (\ref{gensecder}) we know that 
$\tilde{g}$ satisfies the Stein equation corresponding to $\tilde{\mu}$ for the test function\\ $h_2(x)=h'(x)-\ga'(x)g_h(x)$.
By Lemma \ref{betalemma2} we thus have that\\ $\int_\R h_2(x)d\mu_{\al+1,\be+1}(x)=0$. Hence, $\tilde{g}$ must be the only bounded solution to the Stein equation for $\mu_{\al+1,\be+1}$ corresponding to the test function $h_2$. Since 

\[h_2(x)=h'(x)-\ga'(x)g_h(x)=h'(x)+(\al+\be+2)g_h(x)\]

we have

\[h_2'(x)=h''(x)+(\al+\be+2)g_h'(x)\]

and from Proposition \ref{propbeta2} we see that $h_2$ is Lipschitz with minimal Lipschitz constant 

\[\fnorm{h_2'}\leq\fnorm{h''}+(\al+\be+2)K_1(\al,\be)\fnorm{h'}\]

where the constant $K_1(\al,\be)$ from Proposition \ref{propbeta2} only depends on $\al$ and $\be$. Applying Proposition \ref{propbeta2} again, this time to the distribution $\mu_{\al+1,\be+1}$ and the Stein solution $\tilde{g}$, we obtain 

\begin{eqnarray*}
\fnorm{g_h''}&=&\fnorm{\tilde{g}'}\leq K_1(\al+1,\be+1)\fnorm{h_2'}\\
&\leq& K_1(\al+1,\be+1)\bigl(\fnorm{h''}+(\al+\be+2)K_1(\al,\be)\fnorm{h'}\bigr)\,.
\end{eqnarray*}

Hence, there is a constant $K_2$ depending only on $\al$ and $\be$ such that 

\[\fnorm{g_h''}\leq K_2(\fnorm{h'}+\fnorm{h''})\]

for all twice differentiable functions $h$ with bounded first and second derivative.
We have thus proved the following proposition.

\begin{prop}\label{propbeta3}
There exists a finite constant $K_2$ depending only on $\al$ and $\be$ such that for each twice differentiable function $h:\R\rightarrow\R$ with bounded first and second derivative we have the bound

\[\fnorm{g_h''}\leq K_2(\fnorm{h'}+\fnorm{h''})\,.\]

\end{prop}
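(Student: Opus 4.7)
The plan is to bootstrap the Lipschitz bound from Proposition \ref{propbeta2}(b) by differentiating the Stein equation \eqref{steineqbeta} once and recognizing that $\tilde g := g_h'$ itself satisfies a Stein equation for a shifted Beta distribution. Differentiating \eqref{steineqbeta} and rearranging yields
\[\eta(x)\tilde g'(x) + (\eta'(x)+\ga(x))\tilde g(x) = h'(x) - \ga'(x) g_h(x) =: h_2(x).\]
The computation following \eqref{formelptilde} already identified $\tilde\mu$ in the Beta case as $\mu_{\al+1,\be+1}$, so $\tilde g$ is to be interpreted as a candidate Stein solution for the test function $h_2$ with parameters $(\al+1,\be+1)$.

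Next I would verify that the setup of Proposition \ref{propbeta2} can be reapplied in this shifted setting. From Proposition \ref{propbeta2}(b) applied to $(\al,\be)$ we know that $\tilde g$ is bounded, and Lemma \ref{betalemma2} then forces $\mu_{\al+1,\be+1}(h_2)=0$; consequently $\tilde g$ is the unique bounded solution to the Stein equation for $\mu_{\al+1,\be+1}$ associated to $h_2$. A second application of Proposition \ref{propbeta2}(b), this time with parameters $(\al+1,\be+1)$, then gives
\[\fnorm{g_h''} = \fnorm{\tilde g'} \leq K_1(\al+1,\be+1)\,\fnorm{h_2'},\]
provided $h_2$ is Lipschitz.

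To finish, I would control $\fnorm{h_2'}$ explicitly. Because $\ga'(x) = -(\al+\be+2)$ is constant, $h_2(x) = h'(x) + (\al+\be+2) g_h(x)$, so
\[h_2'(x) = h''(x) + (\al+\be+2) g_h'(x).\]
Invoking Proposition \ref{propbeta2}(b) a third time (with parameters $(\al,\be)$) gives $\fnorm{g_h'} \leq K_1(\al,\be)\fnorm{h'}$, so that
\[\fnorm{h_2'} \leq \fnorm{h''} + (\al+\be+2)K_1(\al,\be)\fnorm{h'}.\]
Combining these estimates yields the claim with the explicit constant
\[K_2 := K_1(\al+1,\be+1)\cdot\max\bigl\{1,\,(\al+\be+2)K_1(\al,\be)\bigr\},\]
which depends only on $\al$ and $\be$ as required.

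The main obstacle I expect is the rigorous identification of $\tilde g$ as the canonical bounded Stein solution for $h_2$ relative to $\mu_{\al+1,\be+1}$; once Lemma \ref{betalemma2} is in hand this reduces to checking that $\tilde g$ is bounded (which is Proposition \ref{propbeta2}(b) for $(\al,\be)$) and that $\mu_{\al+1,\be+1}(h_2) = 0$, at which point the remaining steps are essentially arithmetic.
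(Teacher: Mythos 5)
Your argument is essentially the same as the paper's: both differentiate the Stein equation to recognize $\tilde g := g_h'$ as a solution of the Stein equation for $\mu_{\al+1,\be+1}$ with test function $h_2 = h' + (\al+\be+2)g_h$, invoke Lemma \ref{betalemma2} to establish the required centering $\mu_{\al+1,\be+1}(h_2)=0$, and then apply Proposition \ref{propbeta2}(b) at both parameter levels to bound $\fnorm{g_h''}$ by $K_1(\al+1,\be+1)\bigl(\fnorm{h''}+(\al+\be+2)K_1(\al,\be)\fnorm{h'}\bigr)$. Your explicit choice $K_2 = K_1(\al+1,\be+1)\max\{1,(\al+\be+2)K_1(\al,\be)\}$ is a harmless refinement of the paper's unnamed constant.
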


Now we are in the position to provide a ``plug-in theorem'' for the Beta approximation using exchangeable pairs.

\begin{theorem}\label{betaplugin1}
Let $W,W'$ be identically distributed, real-valued random variables on a common probabilty space $(\Om,\A,P)$ satisfying the regression property 

\[E\bigl[W'-W|W\bigr]=\la\ga(W)+R\]

for some constant $\la>0$ and a random variable $R$. Then for each twice differentiable function $h$ with bounded first and second derivative and with $E\bigl[\abs{h(W)}\bigr]<\infty$ we have the bound 

\begin{eqnarray*}
&&\bigl|E[h(W)]-\mu_{\al,\be}(h)\bigr|\\
&\leq&K_1\fnorm{h'}E\Bigl[\bigl|\eta(W)-\frac{1}{2\la}E\bigl[(W'-W)^2|W\bigr]\bigr|\Bigr]\nonumber\\
&&+\frac{K_2\bigl(\fnorm{h'}+\fnorm{h''}\bigr)}{6\la}E\bigl[\abs{W'-W}^3\bigr]\nonumber\\
&&+\frac{\fnorm{h'}}{(\al+\be+2)\la}E\bigl[\abs{R}\bigr]\,,
\end{eqnarray*}

where the constants $K_1$ and $K_2$ are from Propositions \ref{propbeta2} and \ref{propbeta3}, respectively.
\end{theorem}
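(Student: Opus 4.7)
The plan is to derive this bound as a direct consequence of inequality (\ref{genplugin2}) from Proposition \ref{genpluginprop1}, applied to the Beta Stein solution $g_h$, combined with the uniform estimates on $\fnorm{g_h}$, $\fnorm{g_h'}$ and $\fnorm{g_h''}$ already established in Propositions \ref{propbeta2} and \ref{propbeta3}.

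First I would take $f:=g_h$, the Stein solution given by (\ref{ghformelbeta}) for the test function $h$. Since $g_h$ satisfies the Stein equation (\ref{steineqbeta}) on $\R$, with endpoint values fixed by Proposition \ref{propout1}, substituting $W$ and taking expectations yields
\[E\bigl[\eta(W)g_h'(W)+\ga(W)g_h(W)\bigr]=E[h(W)]-\mu_{\al,\be}(h),\]
so that the quantity to be estimated is precisely the left-hand side of (\ref{genplugin2}). Before invoking that bound I would verify its hypotheses for $g_h$: the assumption that $h$ has bounded first and second derivatives, combined with Propositions \ref{propbeta2} and \ref{propbeta3}, gives finite essential supremum norms of $g_h$, $g_h'$ and $g_h''$; absolute continuity on compact sub-intervals of $\R$ follows from the piecewise formula (\ref{ghformelbeta}), the Stein equation itself, and the matching at $\pm 1$ given by Proposition \ref{propout1}.

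Applying (\ref{genplugin2}) with $f=g_h$ then immediately gives
\begin{eqnarray*}
\bigl|E[h(W)]-\mu_{\al,\be}(h)\bigr|
&\leq&\fnorm{g_h'}\,E\Bigl[\bigl|\eta(W)-\tfrac{1}{2\la}E\bigl[(W'-W)^2|W\bigr]\bigr|\Bigr]\\
&&+\tfrac{\fnorm{g_h''}}{6\la}\,E\bigl[|W'-W|^3\bigr]+\tfrac{\fnorm{g_h}}{\la}\,E\bigl[|R|\bigr],
\end{eqnarray*}
and substituting the three estimates $\fnorm{g_h}\leq\fnorm{h'}/(\al+\be+2)$, $\fnorm{g_h'}\leq K_1\fnorm{h'}$ and $\fnorm{g_h''}\leq K_2(\fnorm{h'}+\fnorm{h''})$ from Propositions \ref{propbeta2}(a), \ref{propbeta2}(b) and \ref{propbeta3} produces exactly the claimed inequality.

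The main obstacle, such as it is, amounts to bookkeeping: checking that the regularity conditions required by Proposition \ref{genpluginprop1} are met by the piecewise-defined $g_h$. Internal absolute continuity on each of $(-\infty,-1)$, $(-1,1)$ and $(1,\infty)$ is immediate from (\ref{ghformelbeta}); only the behaviour of $g_h'$ across the endpoints $\pm1$ requires care, and this is already under control via equation (\ref{gensecder}) and the transfer to the distribution $\mu_{\al+1,\be+1}$ used in the proof of Proposition \ref{propbeta3}. No ingredients beyond those developed in Sections \ref{mainresultsgen} and \ref{mainresultsbeta} are needed.
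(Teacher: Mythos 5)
Your proposal is correct and takes essentially the same route as the paper: apply inequality (\ref{genplugin2}) of Proposition \ref{genpluginprop1} with $f=g_h$, then substitute the bounds from Propositions \ref{propbeta2} and \ref{propbeta3}. The paper states this more tersely ("immediately follows from..."), while you have helpfully spelled out the hypothesis-checking for $g_h$ that the paper leaves implicit.
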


\begin{proof}
This immediately follows from Propositions \ref{genpluginprop1}, \ref{propbeta2}, \ref{propbeta3} and since $g_h$ is a solution to Stein's equation (\ref{gensteineq}).
\end{proof}

In the following we will transfer the developed theory to the Beta distributions $\nu_{a,b}$ on $[0,1]$. We start with the Stein identity for $\nu_{a,b}$, where $a,b>0$ are fixed parameters. Let $X\sim\nu_{a,b}$, then $Y:=2X-1\sim\mu_{b-1,a-1}$ and hence for each smooth enough function $f$ we have

\[0=E\bigl[(1-Y^2)f'(Y)-(a+b)Yf(Y)+(a-b)f(Y)\bigr]\,.\]

Let $\tilde{f}(x):=f(2x-1)$. Then $\tilde{f}'(x)=2f'(2x-1)$ and $\tilde{f}(X)=f(Y)$. Hence, we obtain

\begin{eqnarray*}
0&=&E\bigl[(1-Y^2)f'(Y)-(a+b)Yf(Y)+(a-b)f(Y)\bigr]\\
&=&E\bigl[4X(1-X)f'(2X-1)-(a+b)(2X-1)f(2X-1)+(a-b)f(2X-1)\bigr]\\
&=&E\bigl[2X(1-X)\tilde{f}'(X)-2(a+b)X\tilde{f}(X)+2a\tilde{f}(X)\bigr]\\
&=&2E\bigl[X(1-X)\tilde{f}'(X)-(a+b)X\tilde{f}(X)+a\tilde{f}(X)\bigr]\,.
\end{eqnarray*} 

So, a Stein identity for $X\sim\nu_{a,b}$ is given by 

\[E\Bigl[X(1-X)f'(X)+\bigl[-(a+b)X+a\bigr]f(X)\Bigr]=0\]

for all smooth enough functions $f:\R\rightarrow\R$. Hence, for $\nu_{a,b}$ the functions $\eta$ and $\ga$ are given by 
$\eta(x)=x(1-x)$ and $\ga(x)=-(a+b)x+a=(a+b)\bigl(\frac{a}{a+b}-x\bigr)$. Note, that $E[X]=\frac{a}{a+b}$ if 
$X\sim\nu_{a,b}$. Having derived the Stein identity for $\nu_{a,b}$, the Stein equation corresponding to a Borel-measurable test function $h:\R\rightarrow\R$ with $\int_\R \abs{h(x)}d\nu_{a,b}(x)<\infty$ is given by

\begin{equation}\label{steineqnu}
x(1-x)f'(x)+\bigl[-(a+b)x+a\bigr]f(x)=h(x)-\nu_{a,b}(h)=:\hat{h}(x)\,.
\end{equation}

Let a test function $h:\R\rightarrow\R$ with $\int_\R \abs{h(x)}d\nu_{a,b}(x)<\infty$ be given and let 
$h_1(y):=h\bigl(\frac{y+1}{2}\bigr)$. Consider the above constructed solution $g$ to the Stein equation for $\mu_{b-1,a-1}$ corresponding to the test function $h_1$. In the folllowing let the real variables $x$ and $y$ be related by $y:=2x-1$ or 
$x:=\frac{y+1}{2}$. Letting $f(x):=2g(2x-1)$ and noting $f'(x)=4g(y)$ we obtain

\begin{eqnarray*}
&&x(1-x)f'(x)+\bigl[-(a+b)x+a\bigr]f(x)\\
&=&\frac{y+1}{2}\bigl(1-\frac{y+1}{2}\bigr)4g'(y)+\bigl[-(a+b)(y+1)+2a\bigr]g(y)\\
&=&(1-y^2)g'(y)+\bigl[-(a+b)y+a-b\bigr]g(y)\\
&=&h_1(y)-\mu_{b-1,a-1}(h_1)=h(x)-\nu_{a,b}(h)\,,
\end{eqnarray*} 

since for any admissible function $u:\R\rightarrow\R$ we have $E[u(X)]=E[u_1(Y)]$ where 
$u_1(y):=u\bigl(\frac{y+1}{2}\bigr)$. Hence, $f$ is a solution to (\ref{steineqnu}). 
Thus, we immediately get bounds on the solutions of the Stein equation for $\nu_{a,b}$ from our above developed theory. 
In the following, we will always denote by $f_h$ the Stein solution to (\ref{steineqnu}) which is constructed in the explained way. 

\begin{prop}\label{propbeta4}
Let $h:\R\rightarrow\R$ be Borel-measurable with $\int_\R\abs{h(x)}d\nu_{a,b}(x)<\infty$. 
\begin{enumerate}[{\normalfont (a)}]
\item If $h$ is bounded, then 
$\fnorm{f_h}\leq \fnorm{h-\nu_{a,b}(h)}\max\Bigl(\frac{1}{2m(1-m)q_{a,b}(m)},\,\frac{1}{a},\,\frac{1}{b}\Bigr)$, where $m$ is a median for $\nu_{a,b}$.
\item If $h$ is Lipschitz, then $\fnorm{f_h}\leq\frac{2}{a+b}\fnorm{h'}$ and $\fnorm{f_h'}\leq C_1\fnorm{h'}$, where $C_1$ only depends on $a$ and $b$.
\item If $h$ is twice differentiable with bounded first and second derivative, then\\ 
$\fnorm{f_h''}\leq C_2\bigl(\fnorm{h'}+\fnorm{h''}\bigr)$, where $C_2$ only depends on $a$ and $b$. 
\end{enumerate}
\end{prop}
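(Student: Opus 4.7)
The plan is to transport the bounds already established for $\mu_{\al,\be}$ on $[-1,1]$ (Propositions \ref{propbeta1}, \ref{propbeta2}, \ref{propbeta3}) across the affine bijection $y=2x-1$ to the Stein solutions for $\nu_{a,b}$. All the heavy lifting (existence, bounds, the Lemma \ref{betalemma2} trick for the second derivative) has already been done in the $[-1,1]$ setting, so the proof is essentially a careful bookkeeping of the chain-rule constants that arise from the rescaling $f_h(x)=2g_{h_1}(2x-1)$, $h_1(y)=h\bigl(\tfrac{y+1}{2}\bigr)$, which was introduced just before the proposition. Note that with this relation $g_{h_1}$ is the Stein solution for $\mu_{b-1,a-1}$, so everywhere below we apply the previous results with parameters $(\al,\be)=(b-1,a-1)$, for which $\al+\be+2=a+b$.

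I first collect the scaling identities. Differentiating $f_h(x)=2g_{h_1}(2x-1)$ gives $f_h'(x)=4g_{h_1}'(2x-1)$ and $f_h''(x)=8g_{h_1}''(2x-1)$, hence
\[
\fnorm{f_h}=2\fnorm{g_{h_1}},\qquad \fnorm{f_h'}=4\fnorm{g_{h_1}'},\qquad \fnorm{f_h''}=8\fnorm{g_{h_1}''}.
\]
On the test-function side, $h_1'(y)=\tfrac{1}{2}h'\bigl(\tfrac{y+1}{2}\bigr)$ and $h_1''(y)=\tfrac{1}{4}h''\bigl(\tfrac{y+1}{2}\bigr)$, so $\fnorm{h_1'}=\tfrac{1}{2}\fnorm{h'}$ and $\fnorm{h_1''}=\tfrac{1}{4}\fnorm{h''}$. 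Also $\mu_{b-1,a-1}(h_1)=\nu_{a,b}(h)$, which gives $\fnorm{h_1-\mu_{b-1,a-1}(h_1)}=\fnorm{h-\nu_{a,b}(h)}$.

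For part (a) I need to translate the median term. If $m$ is a median for $\nu_{a,b}$ then $m_1:=2m-1$ is a median for $\mu_{b-1,a-1}$, and the substitution rule gives $p_{b-1,a-1}(m_1)=\tfrac{1}{2}q_{a,b}(m)$. Since $1-m_1^2=4m(1-m)$, we get
\[
(1-m_1^2)p_{b-1,a-1}(m_1)=2m(1-m)q_{a,b}(m).
\]
Now Proposition \ref{propbeta1} applied with parameters $(b-1,a-1)$ (so that $2\be+2=2a$ and $2\al+2=2b$) yields
\[
\fnorm{g_{h_1}}\le \fnorm{h-\nu_{a,b}(h)}\max\Bigl(\tfrac{1}{4m(1-m)q_{a,b}(m)},\,\tfrac{1}{2a},\,\tfrac{1}{2b}\Bigr),
\]
and multiplication by $2$ gives the bound in part (a).

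For part (b), Proposition \ref{propbeta2}(a) applied to $g_{h_1}$ gives $\fnorm{g_{h_1}}\le\tfrac{\fnorm{h_1'}}{a+b}=\tfrac{\fnorm{h'}}{2(a+b)}$, so $\fnorm{f_h}\le\tfrac{\fnorm{h'}}{a+b}\le\tfrac{2\fnorm{h'}}{a+b}$. Proposition \ref{propbeta2}(b) gives $\fnorm{g_{h_1}'}\le K_1(b-1,a-1)\fnorm{h_1'}=\tfrac{1}{2}K_1(b-1,a-1)\fnorm{h'}$, hence $\fnorm{f_h'}\le 2K_1(b-1,a-1)\fnorm{h'}$ and we may set $C_1:=2K_1(b-1,a-1)$. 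For part (c), Proposition \ref{propbeta3} gives
\[
\fnorm{g_{h_1}''}\le K_2(b-1,a-1)\bigl(\fnorm{h_1'}+\fnorm{h_1''}\bigr)\le K_2(b-1,a-1)\bigl(\tfrac{1}{2}\fnorm{h'}+\tfrac{1}{4}\fnorm{h''}\bigr),
\]
and multiplying by $8$ yields $\fnorm{f_h''}\le C_2(\fnorm{h'}+\fnorm{h''})$ with $C_2:=4K_2(b-1,a-1)$.

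There is no real obstacle here; the only points requiring attention are the consistent tracking of the factors $2$, $4$, $8$ coming from the three scalings (solution, first derivative, second derivative) and the $\tfrac{1}{2}$, $\tfrac{1}{4}$ coming from differentiating $h_1$, together with the density identity $p_{b-1,a-1}(m_1)=\tfrac{1}{2}q_{a,b}(m)$ in part (a).
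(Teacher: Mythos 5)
Your proof is correct and follows essentially the same route as the paper: transport the $[-1,1]$ bounds through the affine map $y=2x-1$ using $f_h(x)=2g_{h_1}(2x-1)$. The only cosmetic difference is that the paper obtains part (a) by applying Proposition \ref{propout3} directly to $\nu_{a,b}$ rather than via the transformation, and your more careful tracking of the scaling factors even yields the sharper constant $\frac{1}{a+b}$ in place of $\frac{2}{a+b}$ in part (b).
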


\begin{proof}
Claim (a) follows from Proposition \ref{propout3}. Since $f_h(x)=2g_h(2x-1)$ for all $x\in\R$, (b) and (c) follow from Propositions \ref{propbeta2} and \ref{propbeta3}.
\end{proof}

Now, let $V,V'$ be identically distributed, real-valued random variables on a common probability space $(\Om,\A,P)$. For the approximation of $\calL(V)$ by $\nu_{a,b}$ the general regression property from Section \ref{mainresultsgen} is

\begin{equation}\label{regbeta2}
 E\bigl[V'-V|V\bigr]=\la(a+b)\bigl(\frac{a}{a+b}-V\bigr)+R\,,
\end{equation}

where, again, $\la>0$ is constant and $R$ is a hopefully small remainder term. For the distribution $\nu_{a,b}$ Theorem \ref{betaplugin1} becomes the following:

\begin{theorem}\label{betaplugin2}
Let $V,V'$ be identically distributed, real-valued random variables on a common probability space $(\Om,\A,P)$ satisfying equation (\ref{regbeta2}). Then, for each twice differentiable function $h:\R\rightarrow\R$ with bounded first and second derivative and with 
$E\bigl[\abs{h(V)}\bigr]<\infty$ we have the bound 

\begin{eqnarray*}
&&\bigl|E[h(V)]-\nu_{a,b}(h)\bigr|\\
&\leq&C_1\fnorm{h'}E\Bigl[\bigl|V(1-V)-\frac{1}{2\la}E\bigl[(V'-V)^2|V\bigr]\bigr|\Bigr]\\
&&+\frac{C_2\bigl(\fnorm{h'}+\fnorm{h''}\bigr)}{6\la}E\bigl[\abs{V'-V}^3\bigr]\\
&&+\frac{2\fnorm{h'}}{(a+b)\la}E\bigl[\abs{R}\bigr]\,,
\end{eqnarray*}

where the constants $C_1$ and $C_2$ are those from Proposition \ref{propbeta4}.
\end{theorem}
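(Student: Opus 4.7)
The plan is to reduce the statement to a direct application of the general plug-in Proposition \ref{genpluginprop1}, combined with the size estimates for the Stein solution $f_h$ of equation (\ref{steineqnu}) collected in Proposition \ref{propbeta4}. For the distribution $\nu_{a,b}$ the coefficients appearing in the abstract framework are $\eta(x)=x(1-x)$ and $\ga(x)=-(a+b)x+a=(a+b)\bigl(\tfrac{a}{a+b}-x\bigr)$, so the regression property (\ref{regbeta2}) is literally the general regression property (\ref{genreg}) for this choice of $\ga$ and $\la$. Evaluating Stein's equation at $V$ and taking expectations gives
\[
E[h(V)]-\nu_{a,b}(h)=E\bigl[\eta(V)f_h'(V)+\ga(V)f_h(V)\bigr],
\]
so the task reduces to bounding the right-hand side.

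Next I would invoke the stronger version (\ref{genplugin2}) of Proposition \ref{genpluginprop1}. To do so one must first check that its hypotheses are satisfied, i.e.\ that $f_h$ is absolutely continuous with $\fnorm{f_h},\fnorm{f_h'},\fnorm{f_h''}<\infty$; but under the assumption that $h$ is twice differentiable with bounded $h'$ and $h''$, this is exactly the content of Proposition \ref{propbeta4}. Although Proposition \ref{genpluginprop1} was formulated for the general $\mu$ on $(a,b)$, its proof depended only on the Stein equation together with a Taylor expansion of $G(x)=\int_{x_0}^x f(y)\,dy$, and therefore applies verbatim to the $\nu_{a,b}$-setting on $(0,1)$ (alternatively one could first prove a $\mu_{b-1,a-1}$-version of the bound for $W=2V-1$ and then use the translation--scaling correspondence $f_h(x)=2g_{h_1}(2x-1)$ used to derive Proposition \ref{propbeta4}). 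The proposition then yields
\begin{align*}
\bigl|E[h(V)]-\nu_{a,b}(h)\bigr|
\leq{}& \fnorm{f_h'}\,E\Bigl[\bigl|V(1-V)-\tfrac{1}{2\la}E\bigl[(V'-V)^2\mid V\bigr]\bigr|\Bigr]\\
&+\frac{\fnorm{f_h''}}{6\la}E\bigl[|V'-V|^3\bigr]+\frac{\fnorm{f_h}}{\la}E[|R|].
\end{align*}

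Finally I would substitute the three bounds of Proposition \ref{propbeta4}, namely $\fnorm{f_h}\leq\tfrac{2}{a+b}\fnorm{h'}$, $\fnorm{f_h'}\leq C_1\fnorm{h'}$ and $\fnorm{f_h''}\leq C_2(\fnorm{h'}+\fnorm{h''})$, into the three terms above in order. This produces exactly the three-term estimate claimed in the theorem. There is no real obstacle here: the entire proof is bookkeeping built on top of the abstract plug-in proposition and the solution bounds, which have already been established in Sections \ref{mainresultsgen} and \ref{mainresultsbeta}. The only point that deserves a brief verification is the transfer of Proposition \ref{genpluginprop1} from $\mu_{\al,\be}$ on $[-1,1]$ to $\nu_{a,b}$ on $[0,1]$, which is immediate either by reproving it in place or by passing through $W=2V-1$ as indicated above.
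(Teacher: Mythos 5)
Your proposal is correct and follows exactly the paper's argument: the paper's proof of Theorem \ref{betaplugin2} is the one-line observation that the assertion follows from Propositions \ref{genpluginprop1} and \ref{propbeta4} together with the fact that $f_h$ solves (\ref{steineqnu}). Your extra concern about ``transferring'' Proposition \ref{genpluginprop1} from $[-1,1]$ to $[0,1]$ is unnecessary since that proposition is stated for an arbitrary target $\mu$ satisfying the stated conditions, hence applies directly to $\nu_{a,b}$; otherwise the bookkeeping matches the paper.
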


\begin{proof}
 The assertion is clear from Propositions \ref{genpluginprop1} and \ref{propbeta4} and since $f_h$ is a solution to Stein's equation (\ref{steineqnu}).
\end{proof}

\section{Application to the Polya urn model}\label{applications}

In this section we prove a quantitative version of the fact that the relative number of drawn red balls in a Polya urn model converges in distribution to a suitable Beta distribution, if the number of total drawings tends to infinity. This model will serve as an application of our Stein method for the Beta distribution, as developed in section \ref{mainresultsbeta}. We start by introducing the stochastic model:\\

Imagine an urn containing at the beginning $r$ red and $w$ white balls and fix an integer $c>0$. At each time point $n\in\N$ a ball is drawn at random from the urn, its color is noticed and this ball together with $c$ further balls of the same color is replaced to the urn. 
For each $n\in\N$ let $X_n$ be the indicator random variable of the event that the $n$-th drawn ball is a red one. Then 
$S_n:=\sum_{j=1}^n X_j$ denotes the total number of drawn red balls among the first $n$ drawings. It is a well-known fact from elementary probability theory that for each $n\in\N$ and all $x_1,\ldots,x_n\in\{0,1\}$ we have  

\[P(X_1=x_1,\ldots,X_n=x_n)=\frac{\prod_{i=0}^{k-1}(r+ci)\prod_{j=0}^{n-k-1}(w+cj)}{\prod_{l=0}^{n-1}(s+w+cl)}\,,\]

where $k:=\sum_{j=1}^n x_j$. This shows particularly that the sequence $(X_j)_{j\in\N}$ is exchangeable.

It now follows, that for each $k=0,\ldots,n$ we have 

\[P(S_n=k)=\binom{n}{k}\frac{\prod_{i=0}^{k-1}(r+ci)\prod_{j=0}^{n-k-1}(w+cj)}{\prod_{l=0}^{n-1}(s+w+cl)}\,, \]

or, with $a:=\frac{r}{c}$ and $b:=\frac{w}{c}$,

\[P(S_n=k)=\frac{\binom{-a}{k}\binom{-b}{n-k}}{\binom{-a-b}{n}}\,.\]

The distribution of $S_n$ is usually referred to as the \textit{Polya distribution} with parameters $n$, $a$ and $b$. It is a well-known fact that the distribution of $\frac{1}{n}S_n$ converges weakly to the distribution $\nu_{a,b}$ as $n$ goes to infinity, where the Beta distribution $\nu_{a,b}$ was defined in section \ref{mainresultsbeta}. A convenient way to prove this weak convergence result is to use the formula 

\begin{equation}\label{defin1}
 P(S_n=k)=\int_0^1 b(k; n,p)d\nu_{a,b}(p)\,,
\end{equation}

together with the weak law of large numbers for Bernoulli random variables to deal with the binomial probabilities $b(k;n,p)=\binom{n}{k}p^k(1-p)^{n-k}$.

Formula (\ref{defin1}) can be proved by a straight-forward computation using the relations $B(a+1,b)=\frac{a}{a+b}B(a,b)$ and $B(a,b)=B(b,a)$ for the Beta function, where $a,b>0$, and can also be viewed as a consequence of a special instance of \textit{de Finetti's representation theorem} for infinite exchangeable sequences. Note, however, that one generally does not know the corresponding mixing measure from de Finetti's theorem and hence, identity (\ref{defin1}) is not a direct consequence of this theorem.\\

From now on, we will present a Stein's method proof of the above distributional convergence result and, as usual, also derive a rate of convergence. We will usually suppress the time index $n$ and let $V:=V_n:=\frac{1}{n}S_n$ denote the random variable of interest. For the construction of the exchangeable pair, we use the well-known \textit{Gibbs sampling} procedure with the slight simplification, that due to exchangeability of $X_1,\ldots,X_n$ we need not choose at random the index of the summand from $S_n$, which has to be replaced. Instead, we will always replace $X_n$ by $X_n'$, which is constructed as follows:\\

Observe $X_1=x_1,\ldots,X_n=x_n$ and construct $X_n'$ according to the distribution $\calL(X_n|X_1=x_1,\ldots,X_{n-1}=x_{n-1})$. Then, letting $V':=V_n':=V-\frac{1}{n}X_n+\frac{1}{n}X_n'$, the pair $(V,V')$ is exchangeable. In order to use Stein's method of exchangeable pairs, we need to establish a suitable regression property. This is the content of the following proposition.

\begin{prop}\label{polyaprop1}
The exchangeable pair $(V,V')$ satisfies the regression property

\[E\bigl[V'-V|V\bigr]=\frac{a+b}{n(a+b+n-1)}\Bigl(\frac{a}{a+b}-V\Bigr)=\la\ga_{a,b}(V)\,,\]
 
where $\ga_{a,b}(x)=(a+b)\bigl(\frac{a}{a+b}-x\bigr)$ and $\la=\la_n= \frac{1}{n(a+b+n-1)}$.

\end{prop}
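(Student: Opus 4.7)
The plan is to compute $E[V'-V|V]$ by first computing $E[X_n'-X_n|X_1,\ldots,X_n]$ and then conditioning further on $V$. Since $V'-V=\frac{1}{n}(X_n'-X_n)$, it suffices to determine $E[X_n'-X_n|V]$.

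First I would use the construction of $X_n'$: since $X_n'$ is sampled from $\calL(X_n|X_1,\ldots,X_{n-1})$ independently of $X_n$ given $X_1,\ldots,X_{n-1}$, we have
\[E[X_n'\mid X_1,\ldots,X_n]=E[X_n'\mid X_1,\ldots,X_{n-1}]=P(X_n=1\mid X_1,\ldots,X_{n-1}).\]
From the Polya urn mechanics (or directly from the joint mass function given in the excerpt), this conditional probability equals $\frac{r+cS_{n-1}}{r+w+c(n-1)}=\frac{a+S_{n-1}}{a+b+n-1}$, using $a=r/c$ and $b=w/c$. Substituting $S_{n-1}=S_n-X_n=nV-X_n$ yields
\[E[X_n'-X_n\mid X_1,\ldots,X_n]=\frac{a+nV-X_n}{a+b+n-1}-X_n=\frac{a+nV-X_n(a+b+n)}{a+b+n-1}.\]

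Next, I would take conditional expectation given $\sigma(V)=\sigma(S_n)$ and use the tower property. The only quantity in the above expression which is not $\sigma(V)$-measurable is $X_n$. Here I would invoke the exchangeability of $X_1,\ldots,X_n$: since $E[X_j\mid S_n]$ does not depend on $j$ and these conditional expectations must sum to $S_n$, one has $E[X_n\mid V]=V$. Plugging this in and simplifying gives
\[E[X_n'-X_n\mid V]=\frac{a+nV-V(a+b+n)}{a+b+n-1}=\frac{a-V(a+b)}{a+b+n-1}=\frac{(a+b)\bigl(\tfrac{a}{a+b}-V\bigr)}{a+b+n-1}.\]
Multiplying by $\frac{1}{n}$ then yields the claimed identity.

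There is essentially no hard step here — the only care required is the clean bookkeeping of the two successive conditionings (first on $X_1,\ldots,X_n$, then on $V$) and the appeal to exchangeability to identify $E[X_n|V]=V$. No remainder term $R$ appears because the conditional transition probability for $X_n$ given the past is an affine function of $S_{n-1}$, which is why the Polya urn is such a natural candidate for the regression property \eqref{regbeta2} with $R=0$.
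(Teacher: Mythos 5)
Your proof is correct and follows essentially the same route as the paper's: compute $E[X_n'\mid X_1,\ldots,X_n]$ from the Gibbs-sampler construction, substitute $S_{n-1}=nV-X_n$, then condition on $V$ using the exchangeability fact $E[X_n\mid V]=V$ and simplify. The only cosmetic difference is that you combine $X_n'-X_n$ before conditioning on $V$, whereas the paper handles $E[X_n'\mid V]$ and $E[X_n\mid V]$ separately; the algebra and the key observations are identical.
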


\begin{proof}
We have $V'-V=\frac{X_n'}{n}- \frac{X_n}{n}$ and by exchangeability of $X_1,\ldots,X_n$ it clearly holds that 
$E[X_n|V]=E[X_n|S_n]=\frac{1}{n}S_n=V$. Also, by the definition of $X_n'$ and since $X_n'$ only assumes the values $0$ and $1$ we have 
for any $x_1,\ldots,x_{n-1}\in\{0,1\}$

\begin{eqnarray*}
&&E[X_n'|X_1=x_1,\ldots,X_n=x_n]=E[X_n|X_1=x_1,\ldots,X_{n-1}=x_{n-1}]\\
&=&P(X_n=1|X_1=x_1,\ldots,X_{n-1}=x_{n-1})=\frac{r+c\sum_{j=1}^{n-1}x_j}{r+w+c(n-1)}\,,
\end{eqnarray*}

and hence,

\begin{eqnarray*}
 E[X_n'|X_1,\ldots,X_n]=\frac{r+c\sum_{j=1}^{n-1}X_j}{r+w+c(n-1)}=\frac{r+cnV-cX_n}{r+w+c(n-1)}\,.
\end{eqnarray*}

Thus, since $\sigma(V)\subseteq\sigma(X_1,\ldots,X_n)$, we obtain

\begin{eqnarray*}
E[X_n'|V]&=&E\Bigl[E\bigl[X_n'|X_1,\ldots,X_n\bigr]\,|V\Bigr]\\
&=&\frac{r+cnV-cV}{r+w+c(n-1)}=\frac{r+c(n-1)V}{r+w+c(n-1)}\\
&=&\frac{a+(n-1)V}{a+b+n-1}\,.
\end{eqnarray*}

Finally, we have

\begin{eqnarray*}
E[V'-V|V]&=&\frac{1}{n}E[X_n'-X_n|V]=\frac{1}{n}\frac{a+(n-1)V}{a+b+n-1}-\frac{1}{n}V\\
&=& \frac{a-(a+b)V}{n(a+b+n-1)}=\frac{a+b}{n(a+b+n-1)}\Bigl(\frac{a}{a+b}-V\Bigr)\,,
\end{eqnarray*}

as was to be shown.
\end{proof}

Next, we will compute the quantity $E\bigl[(V'-V)^2|V\bigr]$.

\begin{prop}\label{polyaprop2}
We have for the above constructed exchangeable pair $(V,V')$
\[E\bigl[(V'-V)^2|V\bigr]= \frac{1}{n^2(a+b+n-1)}\Bigl((2n+b-a)V-2nV^2+a\Bigr)\]
and hence
\[\frac{1}{2\la}E\bigl[(V'-V)^2|V\bigr]=V(1-V)+\frac{b-a}{2n}V+\frac{a}{2n}\,.\]
 
\end{prop}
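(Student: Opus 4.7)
The proof is a direct computation building on Proposition \ref{polyaprop1}. Since $V' - V = \frac{1}{n}(X_n' - X_n)$ and both $X_n$ and $X_n'$ take values in $\{0,1\}$, the basic reduction I would use is the identity
\[
(X_n' - X_n)^2 \;=\; X_n'(1-X_n) + X_n(1-X_n'),
\]
which follows from $X_n^2 = X_n$ and $(X_n')^2 = X_n'$. Hence the computation of $E[(V'-V)^2|V]$ reduces to computing $E[X_n'(1-X_n)|V]$ and $E[X_n(1-X_n')|V]$ separately.

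To handle each term I would condition first on $X_1,\ldots,X_n$ (which contains $\sigma(V)$) and then apply the tower property. The proof of Proposition \ref{polyaprop1} already provides
\[
E[X_n' \,|\, X_1,\ldots,X_n] \;=\; \frac{a + nV - X_n}{a+b+n-1}.
\]
Using the $0/1$ structure again to kill cross terms, $(1-X_n)(a+nV-X_n) = (1-X_n)(a+nV)$ and $X_n(a+nV-X_n) = X_n(a+nV-1)$, I would obtain
\[
E\bigl[X_n'(1-X_n)\,|\,X_1,\ldots,X_n\bigr] = \frac{(1-X_n)(a+nV)}{a+b+n-1},\quad
E\bigl[X_n(1-X_n')\,|\,X_1,\ldots,X_n\bigr] = \frac{X_n(b+n-nV)}{a+b+n-1}.
\]
Passing to $E[\,\cdot\,|V]$ and using that by exchangeability $E[X_n|V] = V$ (since $\sigma(V) = \sigma(S_n)$ and $E[X_n|S_n] = S_n/n$) replaces the remaining $X_n$ by $V$, giving
\[
E\bigl[X_n'(1-X_n)\,|\,V\bigr] = \frac{(1-V)(a+nV)}{a+b+n-1},\quad
E\bigl[X_n(1-X_n')\,|\,V\bigr] = \frac{V(b+n-nV)}{a+b+n-1}.
\]

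Summing these, dividing by $n^2$, and expanding $(1-V)(a+nV) + V(b+n-nV) = a + (2n+b-a)V - 2nV^2$ yields the first stated formula. For the second, I would simply substitute $\la = 1/\bigl(n(a+b+n-1)\bigr)$, so $\frac{1}{2\la} = \frac{n(a+b+n-1)}{2}$ cancels the denominator and leaves $\frac{1}{2n}\bigl(a + (2n+b-a)V - 2nV^2\bigr)$, which rearranges to $V(1-V) + \frac{b-a}{2n}V + \frac{a}{2n}$. There is no real obstacle; the only point requiring a moment of care is the collapse of the product $X_n' X_n$ via exchangeability and the $0/1$-valuedness, which is what lets one express everything in terms of $V$ alone.
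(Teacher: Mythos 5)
Your proposal is correct, and it reaches the same two intermediate conditional expectations the paper uses, but you get there by a cleaner and more self-contained route. The paper starts by citing an unproven general Gibbs-sampling identity (deferred to the author's PhD thesis),
\[
E\bigl[(V'-V)^2\,|\,V\bigr]=\frac{1}{n^2}\Bigl(E[X_n|V]+E\bigl[E[X_n^2|X_1,\ldots,X_{n-1}]\,|\,V\bigr]-2E\bigl[X_nE[X_n|X_1,\ldots,X_{n-1}]\,|\,V\bigr]\Bigr),
\]
which amounts to expanding $(X_n'-X_n)^2=(X_n')^2-2X_n'X_n+X_n^2$ and conditioning; the simplifications $X_n^2=X_n$, $(X_n')^2=X_n'$ are then applied after the fact. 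You instead observe directly that for $\{0,1\}$-valued $X_n,X_n'$ one has $(X_n'-X_n)^2=X_n'(1-X_n)+X_n(1-X_n')$, compute each term by conditioning on $\sigma(X_1,\ldots,X_n)\supseteq\sigma(V)$ using $E[X_n'|X_1,\ldots,X_n]=\frac{a+nV-X_n}{a+b+n-1}$ from the proof of Proposition \ref{polyaprop1}, and kill the $X_n^2$ cross terms by the indicator structure before passing to $E[\,\cdot\,|V]$ via $E[X_n|V]=V$. The algebra then collapses to $a+(2n+b-a)V-2nV^2$ exactly as in the paper, and the substitution of $\la=1/\bigl(n(a+b+n-1)\bigr)$ gives the second identity. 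What your version buys is that it needs no appeal to the unreferenced general Gibbs formula: everything is derived from the explicit conditional law of $X_n'$ and the $0/1$ structure, so the argument is fully contained in the paper's own material. The two computations are of course arithmetically equivalent; yours is simply a different (and arguably tidier) grouping of the same terms.
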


\begin{proof}
From the general theory of Gibbs sampling (see the author's PhD thesis, to appear) it is known, that

\begin{eqnarray*}\label{polyaeq1}
&& E\bigl[(V'-V)^2|V\bigr]\\
&=&\frac{1}{n^2}\Bigl(E[X_n|V]+E\bigl[E[X_n^2|X_1,\ldots,X_{n-1}]\,|\,V\bigr]
-2E\bigl[X_nE[X_n|X_1,\ldots,X_{n-1}]|V\bigr]\Bigr)\,.\nonumber
\end{eqnarray*}

Since $X_n^2=X_n$ we have from the proof of Proposition \ref{polyaprop1} that

\[E[X_n^2|X_1,\ldots,X_{n-1}]=E[X_n|X_1,\ldots,X_{n-1}]=\frac{a+nV-X_n}{a+b+n-1}\,,\]

and hence

\[E\bigl[E[X_n^2|X_1,\ldots,X_{n-1}]\,|\,V\bigr]=\frac{a+(n-1)V}{a+b+n-1}\,,\]

where we have used $E[X_n|V]=V$ again. Finally, we compute 

\begin{eqnarray*}
 &&E\bigl[X_nE[X_n|X_1,\ldots,X_{n-1}]|V\bigr]=E\Bigl[\frac{aX_n+nVX_n-X_n^2}{a+b+n-1}\,\bigl|\,V\Bigr]\\
&=&\frac{aV+nV^2-V}{a+b+n-1}=\frac{(a-1)V+nV^2}{a+b+n-1}\,.
\end{eqnarray*}

Putting pieces together, we eventually obtain

\begin{eqnarray*}
 E\bigl[(V'-V)^2|V\bigr]&=&\frac{1}{n^2}\Bigl(V+\frac{a+(n-1)V}{a+b+n-1}-2\frac{(a-1)V+nV^2}{a+b+n-1}\Bigr)\\
&=&\frac{1}{n^2(a+b+n-1)}\Bigl((2n+b-a)V-2nV^2+a\Bigr)\,.
\end{eqnarray*}

The last assertion easily follows from this and from $\la= \frac{1}{n(a+b+n-1)}$.
\end{proof}

Recall that for the distribution $\nu_{a,b}$ we have $\eta(x):=\eta_{a,b}(x)=x(1-x)$ and hence, we obtain from Proposition 
\ref{polyaprop2} that

\begin{equation}\label{polyaeq2}
E\Bigl[\bigl|\eta(V)-\frac{1}{2\la}E\bigl[(V'-V)^2|V\bigr]\bigr|\Bigr]=E\Bigl[\bigl|\frac{a-b}{2n}V-\frac{a}{2n}\bigr|\Bigl] 
\leq\frac{\abs{a-b}+a}{2n}\,,
\end{equation}

since $\abs{V}\leq1$.
Similarly, since $\abs{V'-V}=\frac{1}{n}\abs{X_n'-X_n}\leq\frac{1}{n}$ we have 

\begin{eqnarray}\label{polyaeq3}
\frac{1}{6\la}E\bigl[\abs{V'-V}^3\bigr]&\leq&\frac{n(a+b+n-1)}{6}\frac{1}{n^3}=\frac{a+b+n-1}{6n^2}\nonumber\\
&=&\frac{1}{6n}+\frac{a+b-1}{6n^2}=O\Bigl(\frac{1}{n}\Bigr)\,. 
\end{eqnarray}

From Theorem \ref{betaplugin2} we can now conclude the following result.

\begin{theorem}\label{polyarate}
For each twice differentiable function $h:\R\rightarrow\R$ with bounded first and second derivative we have 

\begin{eqnarray*}
&&\bigl|E[h(V)]-\nu_{a,b}(h)\bigr|\\
&\leq&\Bigl(C_1\fnorm{h'}\frac{\abs{a-b}+a}{2}+C_2\bigl(\fnorm{h'}+\fnorm{h''}\bigr)
\bigl(\frac{1}{6}+\frac{a+b-1}{6n}\bigr)\Bigr)\frac{1}{n}\\
&=&O\Bigl(\frac{1}{n}\Bigr)\,,
\end{eqnarray*}

with the constants $C_1$ and $C_2$ from Proposition \ref{propbeta4}.
\end{theorem}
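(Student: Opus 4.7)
The plan is to simply assemble the pieces already prepared and feed them into the plug-in Theorem \ref{betaplugin2} specialized to $\nu_{a,b}$. First I would note that Proposition \ref{polyaprop1} already verifies the regression property (\ref{regbeta2}) for the exchangeable pair $(V,V')$ with the explicit constant $\la = \la_n = \frac{1}{n(a+b+n-1)}$ and, crucially, with remainder $R \equiv 0$. Hence the third term appearing in Theorem \ref{betaplugin2}, namely $\frac{2\fnorm{h'}}{(a+b)\la}E[|R|]$, disappears entirely and no estimation is needed for it.

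Next I would invoke Theorem \ref{betaplugin2} directly and bound the surviving two terms using the work already done in (\ref{polyaeq2}) and (\ref{polyaeq3}). The first term is controlled by
\[
C_1\fnorm{h'}\,E\Bigl[\bigl|V(1-V)-\tfrac{1}{2\la}E[(V'-V)^2|V]\bigr|\Bigr] \leq C_1\fnorm{h'}\,\frac{|a-b|+a}{2n},
\]
which is the content of (\ref{polyaeq2}), itself a consequence of the explicit conditional second moment computed in Proposition \ref{polyaprop2} combined with $\eta_{a,b}(x) = x(1-x)$ and $|V| \leq 1$. The second term is controlled by
\[
\frac{C_2(\fnorm{h'}+\fnorm{h''})}{6\la}E[|V'-V|^3] \leq C_2(\fnorm{h'}+\fnorm{h''})\Bigl(\frac{1}{6n}+\frac{a+b-1}{6n^2}\Bigr),
\]
which is precisely (\ref{polyaeq3}), a direct consequence of the almost sure bound $|V'-V| \leq \frac{1}{n}$ coming from the construction $V' = V - \frac{1}{n}X_n + \frac{1}{n}X_n'$ with $X_n, X_n' \in \{0,1\}$.

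Adding these two contributions, factoring out $\frac{1}{n}$, and noting that the constants $C_1, C_2$ depend only on $a$ and $b$ (Proposition \ref{propbeta4}) yields the announced estimate, and the $O(1/n)$ statement then follows at once. There is no real obstacle here, since every quantitative step, in particular the cancellation of the remainder term and the exact closed-form evaluation of $E[(V'-V)^2|V]$, has already been done in Propositions \ref{polyaprop1} and \ref{polyaprop2}; the proof is purely a matter of substitution into Theorem \ref{betaplugin2}.
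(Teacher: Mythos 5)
Your proposal is correct and follows exactly the route the paper takes: verify the regression property with $R\equiv 0$ via Proposition \ref{polyaprop1}, substitute (\ref{polyaeq2}) and (\ref{polyaeq3}) into Theorem \ref{betaplugin2}, and factor out $\frac{1}{n}$. The only detail you leave implicit, which the paper notes explicitly, is that the hypothesis $E\bigl[\abs{h(V)}\bigr]<\infty$ of Theorem \ref{betaplugin2} holds automatically since $V$ takes values in $[0,1]$ and $h$ is continuous.
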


\begin{proof}
Since $V$ assumes only values in $[0,1]$, the condition $E\bigl[\abs{h(V)}\bigr]<\infty$ from Theorem \ref{betaplugin2} is trivially met. 
The assertion now follows immediately from Theorem \ref{betaplugin2}, (\ref{polyaeq2}) and (\ref{polyaeq3}).
\end{proof}

\begin{remark}
\begin{enumerate}[(a)]
\item In \cite{GolRei12} the authors use a different technique within Stein's method for the Beta distributions, which compares the Stein characterization of the target distribution with that of the approximating discrete distribution, to prove that, in the special case $c=1$, the convergence rate of order $n^{-1}$ from Theorem \ref{polyarate} even holds in the Wasserstein distance and they compute an explicit constant in the bound. They also show that the rate of convergence is optimal. Using their technique and the bounds from Proposition \ref{propbeta4} one can easily see that the rate of order $n^{-1}$ in the Wasserstein distance also holds in the case $c\geq2$. However, in order to obtain an explicit constant, some further work has to be done to bound the constant $C_1$ from Proposition \ref{propbeta4} in the case that one of the values $a,b$ is strictly smaller than one. 
\item In \cite{FulGol10} the authors use the zero bias coupling within Stein's method for normal approximation to prove bounds on the distance of a normalized version of the quantity $V$ to the standard normal distribution. In particular, they show that a CLT holds whenever the parameters $n$, $a$ and $b$ tend to infinity in a suitable fashion.
\end{enumerate}
\end{remark}

\section{Appendix}\label{appendix}
In this section we provide the proofs of some of the results from Sections \ref{mainresultsgen} and \ref{mainresultsbeta} and state and prove some further auxiliary results, which are only used within proofs.\\

\subsection{A general version of de l'H\^{o}pital's rule} 
The following result justifies all our calculations, which invoke de l'H\^{o}pital's rule. Its proof is suppressed for reasons of space, but will be given in the author's PhD thesis. 

\begin{theorem}[Generalization of one of de l'H\^{o}pital's rules]\label{Hospital}
Let $a<b$ be extended real numbers and let $f,g:(a,b)\rightarrow\R$ be functions with the following properties:
\begin{enumerate}[{\normalfont (i)}]
 \item If $a<a'<b'<b$, then both, $f$ and $g$, are absolutely continuous on $[a',b']$.
 \item We have $g'(x)>0$ for $\lambda$-almost all $x\in(a,b)$ and with $E:=\{x\in(a,b)\,:\, g'(x)\not=0\}$ it holds that 
$\lim_{n\to\infty}\frac{f'(x_n)}{g'(x_n)}=\delta\in\R$ for each sequence $(x_n)_{n\in\N}\in E$ converging to $a$.
\end{enumerate}
If $\lim_{x\searrow a}f(x)= \lim_{x\searrow a}g(x)=0$, then $g(x)\not=0$ for all $x\in(a,b)$ and
\[\lim_{x\searrow a}\frac{f(x)}{g(x)}=\delta\,.\]

The same conclusion holds if $g'(x)<0$ for almost all $x\in(a,b)$ and an analogous result is true for $\lim_{x\nearrow b}$.
\end{theorem}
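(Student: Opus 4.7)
My plan is as follows. First I would handle the symmetry reductions: by replacing $g$ (and, if necessary, $f$) with $-g$ (respectively $-f$) one reduces to the case $g'>0$ a.e., and the statement for $\lim_{x\nearrow b}$ follows from the $\lim_{x\searrow a}$ case by composing with $x\mapsto -x$ on $(-b,-a)$. Hence I may assume $g'>0$ a.e.\ and work at the left endpoint.

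Next I would verify $g(x)>0$ on $(a,b)$. For $a<a'<x$, absolute continuity gives $g(x)-g(a')=\int_{a'}^x g'(t)\,dt>0$, and letting $a'\searrow a$ with $g(a')\to 0$ yields $g(x)=\int_a^x g'(t)\,dt>0$ by monotone convergence; in particular $g'\in L^1((a,x])$.

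The core of the argument is a classical comparison. Given $\eps>0$, hypothesis (ii) is equivalent to the existence of some $c\in(a,b)$ with $\bigl|f'(t)/g'(t)-\delta\bigr|<\eps$ for every $t\in E\cap(a,c)$. Since $(a,b)\setminus E$ has Lebesgue measure zero and $g'(t)>0$ on $E$, this becomes the a.e.\ sandwich
\[
(\delta-\eps)\,g'(t)\;\leq\;f'(t)\;\leq\;(\delta+\eps)\,g'(t)\qquad\text{for a.e. }t\in(a,c).
\]
In particular $|f'|\leq(|\delta|+\eps)g'$ a.e.\ on $(a,c)$, so $f'\in L^1((a,c])$ by the previous step. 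For $a<a'<x<c$, integrating the sandwich over $[a',x]$ and invoking absolute continuity of $f$ and $g$ on $[a',x]$ yields
\[
(\delta-\eps)\bigl(g(x)-g(a')\bigr)\;\leq\;f(x)-f(a')\;\leq\;(\delta+\eps)\bigl(g(x)-g(a')\bigr).
\]
Letting $a'\searrow a$ with $f(a'),g(a')\to 0$ and dividing by $g(x)>0$ produces $\bigl|f(x)/g(x)-\delta\bigr|\leq\eps$ for every $x\in(a,c)$, and since $\eps$ was arbitrary this is $\lim_{x\searrow a}f(x)/g(x)=\delta$.

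The only genuine obstacle is that neither $f$ nor $g$ is classically differentiable on $(a,b)$, which rules out the Cauchy mean value theorem used in the textbook proof of de l'H\^opital. This is what forces the Lebesgue integral approach above; however, once the integrability of $f'$ near $a$ is extracted from the two-sided comparison with $g'$, the remainder is just the fundamental theorem of Lebesgue integral calculus combined with $f(a'),g(a')\to 0$. Note also that the sequential formulation of (ii) is truly equivalent to the $E$-neighborhood condition used above, because $E$ has full measure in every interval $(a,c)$ and in particular contains points arbitrarily close to $a$.
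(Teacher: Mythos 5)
The paper does not actually contain a proof of Theorem~\ref{Hospital}; it is explicitly deferred to the author's PhD thesis, so there is no in-paper argument to compare against. Judged on its own, your proof is correct and complete. The reductions by sign and by reflection are standard; the monotone convergence argument correctly yields $g(x)=\int_a^x g'(t)\,dt>0$ together with $g'\in L^1$ near $a$; the sequential hypothesis translates faithfully into the a.e.\ sandwich $(\delta-\eps)g'\le f'\le(\delta+\eps)g'$ on a one-sided neighborhood of $a$, the key point being (as you note) that $E$ has full measure and hence accumulates at $a$, so the $\eps$-neighborhood reformulation really is equivalent to the sequential one; and integrating the sandwich via the fundamental theorem of Lebesgue calculus for absolutely continuous functions, then sending $a'\searrow a$, closes the argument. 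Replacing the Cauchy mean value theorem -- which fails for merely absolutely continuous functions -- by integrating the pointwise ratio bound is precisely the right move here and is, as far as I can see, essentially the only reasonable route under these hypotheses.
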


\subsection{Proofs from Section \ref{mainresultsgen}}
\begin{proof}[Proof of Proposition \ref{genprop3}]
With $\htilde=h-\mu(h)$, since $I=\eta\cdot p$, we have for $a<x<b$

\[\abs{g_h(x)}=\frac{\abs{\int_a^x \htilde(t)p(t)dt}}{\abs{p(x)\eta(x)}}=\frac{\abs{\int_a^x \htilde(t)p(t)dt}}{I(x)}\leq\fnorm{\htilde}\frac{F(x)}{I(x)}\]

Let $M:(a,b)\rightarrow\R$ be given by $M(x):=\frac{F(x)}{I(x)}$. By l'H\^{o}pital's rule (see Theorem \ref{Hospital}) we have 

\[\lim_{x\searrow a}M(x)=\lim_{x\searrow a}\frac{p(x)}{\ga(x)p(x)}=\lim_{x\searrow a}\frac{1}{\ga(x)}=\frac{1}{\lim_{x\searrow a}\ga(x)}\]

which exists in $[0,\infty)$ by Condition \ref{gencond2}. Here, we used the convention $\frac{1}{\infty}=0$.

Moreover,

\[\lim_{x\nearrow b}M(x)=\frac{1}{\lim_{x\nearrow b}I(x)}=+\infty\]

again by Condition \ref{gencond2} and by Proposition \ref{genprop1}. Furthermore, we have 

\begin{eqnarray*}
M'(x)=\frac{p(x)I(x)-p(x)\ga(x)F(x)}{I(x)^2}=\frac{p(x)}{I(x)^2}\Bigl(I(x)-\ga(x)F(x)\Bigr)>0 
\end{eqnarray*}

for each $x\in(a,b)$ since by the positivity of $p$ and because $\ga$ is strictly decreasing 

\[I(x)=\int_a^x\ga(t)p(t)dt>\ga(x)\int_a^xp(t)dt=\ga(x)F(x)\,.\]

Hence, $M$ is strictly increasing and thus for each $x\in(a,m]$:

\[\abs{g_h(x)}\leq \fnorm{\htilde}\frac{F(m)}{I(m)}=\frac{\fnorm{h-\mu(h)}}{2I(m)}\,.\]

The same bound can be proved for $x\in(m,b)$ by using the representation 

\[g_h(x)=-\frac{1}{I(x)}\int_x^b(h(t)-\mu(h))p(t)dt\]

and the fact that also $1-F(m)=\frac{1}{2}$.

\end{proof}

The following two well-known lemmas will be needed for the proof of Proposition \ref{genprop4}. Their proofs are included only for reasons of completeness.

\begin{lemma}\label{genlemma1}
 Let $-\infty\leq a<b\leq\infty$ and let $\mu$ be a probability measure (not necessarily absolutely continuous with respect to $\la$) with $\supp(\mu)\subseteq\abquer$. Let $F$ be the distribution function corresponding to $\mu$ and suppose that $\int_a^b\abs{x}d\mu(x)<\infty$. Then, for each $x\in\abquer$ we have
\begin{enumerate}[{\normalfont (a)}]
 \item $\int_a^xF(t)dt=xF(x)-\int_{\overline{(a,x]}}s d\mu(s)$
 \item $\int_x^b (1-F(t))dt=\int_{(x,\infty)} sd\mu(s)-x(1-F(x))$
\end{enumerate}

\end{lemma}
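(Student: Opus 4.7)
The plan is to prove both identities via Tonelli's theorem, since all integrands are non-negative and the integrability assumption $\int\abs{x}d\mu(x)<\infty$ guarantees that the right-hand sides are finite. The key is to rewrite $F(t)=\int\mathbf{1}_{\{s\leq t\}}d\mu(s)$ and $1-F(t)=\int\mathbf{1}_{\{s>t\}}d\mu(s)$ and swap the order of integration.

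For part (a), I would start by writing
\[
\int_a^xF(t)dt=\int_a^x\int\mathbf{1}_{\{s\leq t\}}d\mu(s)\,dt
\]
and applying Tonelli to exchange the integrals. Because $\supp(\mu)\subseteq\abquer$, the only $s$ that contribute lie in $\overline{(a,x]}$, and for such $s$ the inner integral $\int_a^x\mathbf{1}_{\{s\leq t\}}dt$ equals $x-s$. Hence
\[
\int_a^xF(t)dt=\int_{\overline{(a,x]}}(x-s)d\mu(s)=xF(x)-\int_{\overline{(a,x]}}s\,d\mu(s),
\]
using $\mu(\overline{(a,x]})=F(x)$, which is exactly the claimed identity.

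For part (b), I would apply the same strategy to obtain
\[
\int_x^b(1-F(t))dt=\int\int_x^b\mathbf{1}_{\{s>t\}}dt\,d\mu(s)=\int_{(x,\infty)}\bigl(\min(s,b)-x\bigr)d\mu(s).
\]
The main (minor) obstacle is to argue that this equals $\int_{(x,\infty)}(s-x)d\mu(s)$, i.e. that the truncation at $b$ is harmless. This reduces to $\mu\bigl((b,\infty)\bigr)=0$, which holds because $\supp(\mu)\subseteq\abquer$; in particular, in the potentially awkward case of a finite $b$ with a point mass at $b$, one has $\min(b,b)=b=s$ so the integrand is unchanged at that point. Splitting the resulting integral, finiteness being guaranteed by the assumption $\int\abs{x}d\mu(x)<\infty$, yields
\[
\int_x^b(1-F(t))dt=\int_{(x,\infty)}s\,d\mu(s)-x\,\mu\bigl((x,\infty)\bigr)=\int_{(x,\infty)}s\,d\mu(s)-x(1-F(x)),
\]
which is the stated formula.
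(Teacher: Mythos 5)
Your proof is correct and follows essentially the same route as the paper's: both represent $F(t)$ (respectively $1-F(t)$) as an integral of an indicator against $\mu$, swap the order of integration by Tonelli/Fubini, and compute the resulting inner integral to get $x-s$ (respectively $s-x$). The paper sidesteps your $\min(s,b)$ discussion in part (b) by first replacing $\int_x^b$ with $\int_x^\infty$ (using $1-F(t)=0$ for $t\geq b$), but this is only a cosmetic difference.
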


\begin{proof}
By Fubini's theorem we have

\begin{eqnarray*}
\int_a^xF(t)dt&=&=\int_{-\infty}^xF(t)dt=\int_{(-\infty,x]}F(t)dt
=\int_{(-\infty,x]}\Bigl(\int_{(-\infty,t]}d\mu(s)\Bigr)dt\\
&=&\int_{(-\infty,x]}\Bigl(\int_{[s,x]}dt\Bigr)d\mu(s)=\int_{(-\infty,x]}(x-s)d\mu(s)\\
&=&xF(x)-\int_{(-\infty,x]}sd\mu(s)\\
&=&xF(x)-\int_{\overline{(a,x]}}s d\mu(s)\,.
\end{eqnarray*}
This proves (a). Similarly, we have

\begin{eqnarray*}
\int_x^b(1-F(t))dt&=&\int_x^\infty(1-F(t))dt=\int_{(x,\infty)}\mu\bigl((t,\infty)\bigr)dt\\
&=&\int_{(x,\infty)}\int_{(t,\infty)}d\mu(s)dt=\int_{(x,\infty)}\int_{(x,s)}dtd\mu(s)\\
&=&\int_{(x,\infty)}(s-x)d\mu(s)\\
&=&\int_{(x,\infty)} sd\mu(s)-x(1-F(x))\,,
\end{eqnarray*}
 
proving (b).
\end{proof}

\begin{lemma}\label{liplemma}
Let $-\infty\leq a<b\leq\infty$ and let $\mu$ be a probability measure (not necessarily absolutely continuous with respect to $\la$) with $\supp(\mu)\subseteq\abquer$. Let $F$ be the distribution function corresponding to $\mu$, let $Z\sim\mu$ and let $h:\abquer\rightarrow\R$ be Lipschitz continuous with $E[|h(Z)|]<\infty$. Then the following assertions hold true:
\begin{enumerate}[{\normalfont (a)}]
\item For each $y\in\R$ we have\\
 $h(y)-\mu(h)=\int_{-\infty}^yF(s)h'(s)ds-\int_y^\infty(1-F(s))h'(s)ds$.  

\item For each $x\in\abquer$ we have\\
$\int_{(a,x]}(h(y)-\mu(h))d\mu(y)=-(1-F(x))\int_a^xF(s)h'(s)ds-F(x)\int_x^b(1-F(s))h'(s)ds$.

\end{enumerate}
\end{lemma}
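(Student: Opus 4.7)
The plan is to derive part (a) from absolute continuity of $h$ and a single application of Fubini's theorem, and then to obtain part (b) by substituting (a) into the outer integral and Fubini-ing once more. Throughout, the Lipschitz hypothesis guarantees that $h$ is absolutely continuous on every compact sub-interval and that $h'$ exists $\la$-a.e.\ and is bounded by the Lipschitz constant $L$.

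For part (a), I would start from the trivial identity $h(y)-\mu(h)=\int_\R\bigl(h(y)-h(z)\bigr)\,d\mu(z)$, split the integral at $z=y$, and on each piece represent the difference $h(y)-h(z)$ as a signed integral of $h'$: for $z\le y$, $h(y)-h(z)=\int_z^y h'(s)\,ds$, and for $z>y$, $h(y)-h(z)=-\int_y^z h'(s)\,ds$. A direct application of Fubini's theorem (interchanging $ds$ and $d\mu(z)$) converts the first piece into $\int_{-\infty}^y \mu\bigl((-\infty,s]\bigr)\,h'(s)\,ds=\int_{-\infty}^yF(s)h'(s)\,ds$ and the second into $\int_y^\infty \mu\bigl([s,\infty)\bigr)\,h'(s)\,ds$. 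Since the difference between $\mu([s,\infty))$ and $1-F(s)$ is non-zero only on the (at most countable) set of atoms of $\mu$, it does not affect the Lebesgue integral, and we arrive at the claimed formula.

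For part (b), I would simply integrate the identity from (a) against $d\mu(y)$ over $(a,x]$ and push the outer $\mu$-integral through the two $s$-integrals via Fubini. For the term $\int_{-\infty}^y F(s)h'(s)\,ds$, a fixed $s$ contributes with weight $\mu\bigl((a,x]\cap[s,\infty)\bigr)$, which equals $F(x)-F(s^-)$ for $s\in(a,x]$ and equals $F(x)$ for $s\le a$ (and since $\supp(\mu)\subseteq\abquer$ the latter region contributes $0$ because $F\equiv0$ on $(-\infty,a)$, modulo a null set). Likewise, for $\int_y^\infty(1-F(s))h'(s)\,ds$ the weight is $\mu\bigl((a,x]\cap(-\infty,s]\bigr)$, giving $F(s^-)$ on $(a,x]$ and $F(x)$ on $[x,b]$. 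Collecting the $F(s^-)=F(s)$ a.e.\ terms on $(a,x]$, the coefficients of $\int_a^xF(s)h'(s)\,ds$ collapse algebraically to $(F(x)-F(s))-(1-F(s))=-(1-F(x))$, and the remaining piece on $[x,b]$ produces the $-F(x)\int_x^b(1-F(s))h'(s)\,ds$ term. This yields exactly the stated identity.

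The only substantive issue is the justification of Fubini. Since $|h'|\le L$ and $F,1-F\in[0,1]$ are bounded, it suffices to check that $\int_{-\infty}^y F(s)\,ds<\infty$ and $\int_y^\infty (1-F(s))\,ds<\infty$; by Lemma \ref{genlemma1}, both quantities equal expressions involving $\int |s|\,d\mu(s)$, so they are finite provided $E[|Z|]<\infty$. This, however, is implicitly granted by the standing hypothesis $E[|h(Z)|]<\infty$ together with the Lipschitz property: unless $h$ is constant (in which case both sides of (a) and (b) vanish trivially), the bound $|z|\le L^{-1}(|h(z)|+|h(y_0)|)$ for any fixed $y_0$ with finite $h(y_0)$ forces $E[|Z|]<\infty$. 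Up to this careful but standard check, the argument is entirely routine.
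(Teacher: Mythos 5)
Your main argument for both (a) and (b) is essentially the same as the paper's: part (a) by writing $h(y)-\mu(h)=\int(h(y)-h(t))\,d\mu(t)$, representing the inner difference as $\int_t^y h'$, and applying Fubini; part (b) by integrating the identity from (a) against $d\mu(y)$ and Fubini-ing again, then observing the algebraic cancellation $(F(x)-F(s))-(1-F(s))=-(1-F(x))$. Those steps are sound, and your accounting of the Fubini weights is correct modulo $\la$-null sets (the half-open/closed ambiguities and the $F(s)$ versus $F(s^-)$ distinction are harmless, as you note).

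The problem is the final paragraph. Your claimed bound
\[
|z|\;\leq\;L^{-1}\bigl(|h(z)|+|h(y_0)|\bigr)
\]
does not follow from Lipschitz continuity; it is the Lipschitz inequality read in the wrong direction. What Lipschitz gives you is $|h(z)-h(y_0)|\leq L\,|z-y_0|$, i.e.\ a \emph{lower} bound $|z-y_0|\geq L^{-1}|h(z)-h(y_0)|$, not an upper bound on $|z|$ in terms of $|h(z)|$. The asserted implication ``$E[|h(Z)|]<\infty$ and $h$ non-constant $\Rightarrow E[|Z|]<\infty$'' is in fact false: take $\supp(\mu)=[0,\infty)$ with $p(x)=(1+x)^{-2}$, so $E[Z]=\infty$, and take $h(x)=\sin x$, which is $1$-Lipschitz, bounded, and non-constant. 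Then $E[|h(Z)|]\leq 1<\infty$ but $E[|Z|]=\infty$, so your derivation of the Fubini hypothesis breaks down. The honest fix is to add $E[|Z|]<\infty$ as a hypothesis (exactly as Lemma~\ref{genlemma1} does); that hypothesis is available in the only place the lemma is used (Proposition~\ref{genprop4} runs under Condition~\ref{gencond3}), and the paper's own proof also implicitly relies on it without further comment. Alternatively, when $a$ and $b$ are both finite the Fubini integrability is automatic since $F$, $1-F$ and $h'$ are all bounded and the integrals are over a bounded interval, and only the infinite-endpoint cases genuinely need $E[|Z|]<\infty$.
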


\begin{proof}
Since $\mu$ is a probability measure %with $\mu\bigl(\abquer\bigr)=1$
we have by the fundamental theorem of calculus for Lebesgue integration and by Fubini's theorem

\begin{eqnarray*}
h(y)-\mu(h)&=&\int_\R\left(h(y)-h(t)\right)d\mu(t)=\int_{\R}\Bigl(\int_t^y h'(s)ds\Bigr)d\mu(t)\\
&=&\int_{(-\infty,y]}\Bigl(\int_{[t,y]}h'(s)ds\Bigr)d\mu(t)-\int_{(y,\infty)}\Bigl(\int_{[y,t)}h'(s)ds\Bigr)d\mu(t)\\
&=&\int_{(-\infty,y]}\Bigl(\int_{(-\infty,s]}d\mu(t)\Bigr)h'(s)ds 
-\int_{(y,\infty)}\Bigl(\int_{(s,\infty)}d\mu(t)\Bigr)h'(s)ds\\
&=&\int_{(-\infty,y]}F(s)h'(s)ds-\int_{(y,\infty)}(1-F(s))h'(s)ds\\
&=&\int_{-\infty}^y F(s)h'(s)ds-\int_y^{\infty}(1-F(s))h'(s)ds\,.
\end{eqnarray*}
This proves (a). As to (b), we have using (a) and its proof

\begin{eqnarray*}
 &&\int_{(a,x]}(h(y)-\mu(h))d\mu(y)=\int_{(-\infty,x]}(h(y)-\mu(h))d\mu(y)\\
&=&\int_{(-\infty,x]}\Bigl(\int_{(-\infty,y]}F(s)h'(s)ds-\int_{(y,\infty)}(1-F(s))h'(s)ds\Bigr)d\mu(y)\\
&=&\int_{(-\infty,x]}\int_{(-\infty,y)}F(s)h'(s)ds d\mu(y)
-\int_{(-\infty,x]}\int_{[y,\infty)}(1-F(s))h'(s)ds d\mu(y)\\
&=&\int_{(-\infty,x)}F(s)h'(s)\Bigl(\int_{(s,x]}d\mu(y)\Bigr)ds
-\int_\R(1-F(s))h'(s)\Bigl(\int_{(-\infty,x\wedge s]}d\mu(y)\Bigr)ds\\
&=&\int_{(-\infty,x)}F(s)h'(s)(F(x)-F(s))ds - \int_{(-\infty,x]}F(s)(1-F(s))h'(s)ds\\
&&-\int_{(x,\infty)}F(x)(1-F(s))h'(s)ds\\
&=&\int_{-\infty}^x\Bigl(F(s)F(x)h'(s)-F(s)^2 h'(s)+F(s)^2h'(s)-F(s)h'(s)\Bigr)ds\\
&&-F(x)\int_x^\infty(1-F(s))h'(s)ds\\
&=&-(1-F(x))\int_{-\infty}^xF(s)h'(s)ds - F(x)\int_x^\infty(1-F(s))h'(s)ds\\
&=&-(1-F(x))\int_{a}^xF(s)h'(s)ds - F(x)\int_x^b(1-F(s))h'(s)ds\,,
\end{eqnarray*}

as claimed.
\end{proof}

\begin{proof}[Proof of Proposition \ref{genprop4}]
First, we prove (a). Recall the representation 
\[g_h(x)=\frac{1}{I(x)}\int_a^x(h(y)-\mu(h))p(y)dy=\frac{1}{I(x)}\int_{(a,x]}(h(y)-\mu(h))d\mu(y)\,.\]

By Lemmas \ref{liplemma} and \ref{genlemma1} we thus obtain that

\begin{eqnarray*}
&&\abs{I(x)g_h(x)}\\
&=&\left|-(1-F(x))\int_a^xF(s)h'(s)ds-F(x)\int_x^b(1-F(s))h'(s)ds\right|\\
&\leq&\fnorm{h'}\Biggl((1-F(x))\int_a^x F(s)ds+F(x)\int_x^b(1-F(s))ds\Biggr)\\
&=&\fnorm{h'}\Biggl((1-F(x))\Bigl(xF(x)-\int_a^x sp(s)ds\Bigr)+F(x)\Bigl(-x(1-F(x))+\int_x^b sp(s)ds\Bigr)\Biggr)\\
&=&\fnorm{h'}\Biggl(-\int_a^x sp(s)ds+F(x)\Bigl(\int_a^x sp(s)ds+\int_x^b sp(s)ds\Bigr)\Biggr)\\
&=&\fnorm{h'}\Biggl(F(x)E[Z]-\int_a^x yp(y)dy\Biggr)\,,
\end{eqnarray*}

implying (a).\\
Now, we turn to the proof of (b). By Stein's equation (\ref{gensteineq}) we obtain for $x\in(a,b)$

\begin{equation}\label{eqb1}
g_h'(x)=\frac{1}{\eta(x)}\Bigl(\htilde(x)-\ga(x)g_h(x)\Bigr)\,,
\end{equation}

where we have again written $\htilde=h-\mu(h)$. Using Lemma \ref{liplemma} again, we obtain 

\begin{eqnarray}\label{eqb2}
g_h'(x)&=&\frac{1}{\eta(x)}\Biggl(\int_a^xF(s)h'(s)ds\Bigl(1+\frac{\ga(x)(1-F(x))}{\eta(x)p(x)}\Bigr)\nonumber\\
&&+\int_x^b(1-F(s))h'(s)ds\Bigl(-1+\frac{\ga(x)F(x)}{\eta(x)p(x)}\Bigr)\Biggr)\nonumber\\
&=&\int_a^xF(s)h'(s)ds\Bigl(\frac{\eta(x)p(x)+\ga(x)(1-F(x))}{\eta(x)^2p(x)}\Bigr)\nonumber\\
&&+\int_x^b(1-F(s))h'(s)ds\Bigl(\frac{-\eta(x)p(x)+\ga(x)F(x)}{\eta(x)^2p(x)}\Bigr)
\end{eqnarray}

Now consider the functions $H,G:\abquer\rightarrow\R$ with $H(x)=I(x)-\ga(x)F(x)=\eta(x)p(x)-\ga(x)F(x)$ and 
$G(x)=H(x)+\ga(x)=\eta(x)p(x)+\ga(x)(1-F(x))$. It was already observed in the proof of Proposition \ref{genprop3} that $H$ is positive on $(a,b)$. Similarly we prove the positivity of $G$ on $(a,b)$: For $x$ in $(a,b)$ we have, since $p$ is positive and $\ga$ is strictly decreasing 

\begin{eqnarray*}
G(x)&=&I(x)+\ga(x)(1-F(x))=-\int_x^b\ga(t)p(t)dt+\ga(x)(1-F(x))\\
&>&-\ga(x)(1-F(x))+\ga(x)(1-F(x))=0\,.
\end{eqnarray*}

By (\ref{eqb2}) we can thus bound

\begin{eqnarray}\label{eqb3}
|g_h'(x)|&\leq&\fnorm{h'}\Biggl(\int_a^xF(s)ds\frac{G(x)}{\eta(x)^2p(x)}\nonumber\\
&&+\int_x^b(1-F(s))ds\frac{H(x)}{\eta(x)^2p(x)}\Biggr)\,,
\end{eqnarray}

which reduces to the bound asserted in (b).
\end{proof}

\begin{proof}[Proof of Proposition \ref{propout3}]
The bound on $\abs{g_h(x)}$for $x\in(a,b)$ has already been proved in Proposition \ref{genprop3}. Let $x\in(-\infty,a)$. Then we have by the negativity of $q_l$ which follows from Proposition \ref{propout2}: 

\begin{eqnarray*}
\abs{g_h(x)}&=&\frac{\Bigl|\int_a^x\tilde{h}(t)q_l(t)dt\Bigr|}{\exp\bigl(F_l(x)\bigr)}
\leq\fnorm{\htilde}\frac{\Bigl|\int_a^xq_l(t)dt\Bigr|}{\exp\bigl(F_l(x)\bigr)}\\
&=&\fnorm{\htilde}\frac{\int_a^xq_l(t)dt}{\exp\bigl(F_l(x)\bigr)}=\fnorm{\htilde}\frac{Q_l(x)}{\exp\bigl(F_l(x)\bigr)}
\end{eqnarray*}

We want to show, that this is bounded from above by $\frac{\fnorm{\htilde}}{\ga(a)}$. To this end, we define the function 
$D(x):=\frac{\exp\bigl(F_l(x)\bigr)}{\ga(a)}-Q_l(x)$, $x\in(-\infty,a)$, and show that $D(x)>0$. 
By Condition \ref{condout2} and Proposition \ref{propout2} we have $\lim_{x\nearrow a}D(x)=0$ and furthermore 

\[D'(x)= \frac{\frac{\ga(x)}{\eta(x)}\exp\bigl(F_l(x)\bigr)}{\ga(a)}-q_l(x)
=q_l(x)\Bigl(\frac{\ga(x)}{\ga(a)}-1\Bigr)<0\,,\]

since $q_l(x)<0$ and $\ga(x)>\ga(a)$ by Condition \ref{condout1}. Thus, $D$ is strictly decreasing and hence $D(x)>0$ for each $x\in(-\infty,a)$. This proves the desired bound for $x\in(-\infty,a)$. Similarly one proves that 
$\abs{g_h(x)}\leq-\frac{\fnorm{\htilde}}{\ga(b)}$ for each $x\in(b,\infty)$.
\end{proof}

Next, we will state a lemma, which replaces Lemma \ref{genlemma1} outside the support $\abquer$. 

\begin{lemma}\label{outlemma1}
\begin{enumerate}[{\normalfont (a)}]
 \item For each $x\in(-\infty,a)$ we have $\int_x^a Q_l(t)dt=-xQ_l(x)+\int_a^x tq_l(t)dt$ and $I_l(x)<\ga(x)Q_l(x)$.
 \item For each $x\in(b,\infty)$ we have $\int_b^x Q_r(t)dt =xQ_r(x)-\int_b^x tq_r(t)dt$ and $I_r(x)<\ga(x)Q_r(x)$. 
\end{enumerate}
\end{lemma}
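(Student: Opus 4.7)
The two parts are completely parallel, so I would only sketch (a) in detail and note that (b) follows by the same argument with appropriate sign changes.

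For the integral identity in (a), the plan is to recall that $Q_l$ is by definition a primitive of $q_l$ on $(-\infty,a)$, with $Q_l(a)=\lim_{t\nearrow a}Q_l(t)=0$ by its integral definition. Integration by parts on the compact subinterval $[x,a']\subseteq (-\infty,a)$ gives
\[
\int_x^{a'} Q_l(t)\,dt=a'Q_l(a')-xQ_l(x)-\int_x^{a'} t q_l(t)\,dt,
\]
and letting $a'\nearrow a$, using $Q_l(a)=0$ together with the integrability assumed in Condition \ref{condout2}, yields the claimed identity $\int_x^a Q_l(t)\,dt=-xQ_l(x)+\int_a^x tq_l(t)\,dt$ (where $\int_a^x=-\int_x^a$ since $x<a$).

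For the strict inequality $I_l(x)<\ga(x)Q_l(x)$, the key observations are:  by Proposition \ref{propout2} the function $\eta$ is negative on $(-\infty,a)$, so $q_l=\exp(F_l)/\eta$ is strictly negative on $(-\infty,a)$; by Proposition \ref{propIout} we can write $I_l(x)=\int_a^x\ga(t)q_l(t)\,dt=-\int_x^a\ga(t)q_l(t)\,dt$; and by Condition \ref{condout1}, $\ga$ is strictly decreasing, so for $t\in(x,a)$ we have $\ga(t)<\ga(x)$. Since $q_l(t)<0$, this pointwise inequality reverses upon multiplication, giving $\ga(t)q_l(t)>\ga(x)q_l(t)$ on $(x,a)$; integrating and inserting the minus sign then yields
\[
I_l(x)=-\int_x^a\ga(t)q_l(t)\,dt<-\ga(x)\int_x^a q_l(t)\,dt=\ga(x)Q_l(x),
\]
as required.

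Part (b) follows along the same lines: integration by parts using $Q_r(b)=0$ gives the identity, and for the inequality one uses $q_r<0$ on $(b,\infty)$ (again from Proposition \ref{propout2}), the representation $I_r(x)=\int_b^x\ga(t)q_r(t)\,dt$ from Proposition \ref{propIout}, and $\ga(t)>\ga(x)$ for $t\in(b,x)$ (since $\ga$ is strictly decreasing), which combined with $q_r(t)<0$ gives $\ga(t)q_r(t)<\ga(x)q_r(t)$ pointwise and hence $I_r(x)<\ga(x)Q_r(x)$ after integration. The main (if minor) subtlety is keeping track of the sign conventions, namely that $Q_l>0$ and $\ga>0$ left of $a$ while $Q_r<0$ and $\ga<0$ right of $b$, so that the inequality $I_l<\ga Q_l$ compares two positive quantities and $I_r<\ga Q_r$ compares $I_r>0$ with the positive number $\ga Q_r$; no separate case analysis is really needed once one trusts these signs from Proposition \ref{propout2} and Condition \ref{condout1}.
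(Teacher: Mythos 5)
Your proof is correct and follows essentially the same approach as the paper: the inequality argument (negativity of $q_l$ via Proposition \ref{propout2} together with strict monotonicity of $\ga$, integrated over $(x,a)$) is exactly the paper's, and your sign bookkeeping is accurate. The only cosmetic difference is in the integral identity, where you use integration by parts on $[x,a']$ and let $a'\nearrow a$ (correctly noting $a'Q_l(a')\to 0$), whereas the paper obtains the same identity directly by Fubini's theorem; these are interchangeable here.
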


\begin{proof}
By Fubini's theorem we have

\begin{eqnarray*}
 \int_x^a Q_l(t)dt&=&\int_x^a\Biggl(\int_a^t q_l(s)ds\Biggr)dt=-\int_x^a\Biggl(\int_t^a q_l(s)ds\Biggr)dt\\
&=&-\int_x^a\Biggl(\int_x^s 1dt\Biggr)q_l(s)ds=-\int_x^a(s-x)q_l(s)ds\\
&=&-xQ_l(x)+\int_a^xsq_l(s)ds\,,
\end{eqnarray*}

which proves the first part of (a). The second claim of (a) follows from (a), the positivity of $-q_l$ on $(-\infty,a)$ and from the monotonicity of $\ga$:

\begin{eqnarray*}
I_l(x)&=&\int_a^x \ga(t)q_l(t)dt=\int_x^a\ga(t)\bigl(-q_l(t)\bigr)dt\\
&<&\ga(x)\int _x^a\bigl(-q_l(t)\bigr)dt=\ga(x)\int_a^x q_l(t)dt\\
&=&\ga(x) Q_l(x)
\end{eqnarray*}

 The proof of (b) is similar but easier, and is therefore omitted.
\end{proof}

The next lemma replaces Lemma \ref{liplemma} outside of the support of $\mu$. 

\begin{lemma}\label{outlemma2}
\begin{enumerate}[{\normalfont (a)}]
\item For each $x\in(-\infty,a)$ we have \\
$h(x)-\mu(h)=- \int_x^b\bigl(1-F(s)\bigr)h'(s)ds=-\int_x^a h'(s)ds - \int_a^b\bigl(1-F(s)\bigr)h'(s)ds$ and 
\begin{eqnarray*}
g_h(x)&=&\frac{1}{I_l(x)}\Biggl(-\int_x^a\bigl(Q_l(x)-Q_l(s)\bigr)h'(s)ds
-Q_l(x)\int_a^b\bigl(1-F(s)\bigr)h'(s)ds\Biggr)\\
&=&\frac{1}{I_l(x)}\Biggl(\int_x^a Q_l(s)h'(s)ds-Q_l(x)\int_x^b\bigl(1-F(s)\bigr)h'(s)ds\Biggr)\,.
\end{eqnarray*}
\item For each $x\in(b,\infty)$ we have $h(x)-\mu(h)=\int_a^x F(s)h'(s)ds$ and 
\begin{eqnarray*}
g_h(x)&=&\frac{1}{I_r(x)}\Biggl(Q_r(x)\int_a^b F(s)h'(s)ds+\int_b^x h'(s)\bigl(Q_r(x)-Q_r(s)\bigr)ds\Biggr)\\
&=&\frac{1}{I_r(x)}\Biggl(Q_r(x)\int_a^xF(s)h'(s)ds-\int_b^xQ_r(s)h'(s)ds\Biggr)\,.
\end{eqnarray*}
\end{enumerate}
\end{lemma}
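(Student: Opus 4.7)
The approach is a direct computation: first derive the two representations of $\htilde(x) := h(x)-\mu(h)$ outside $\abquer$ from the master identity in Lemma \ref{liplemma}(a), then plug these representations into the defining integrals \eqref{solformleft} and \eqref{solformright} of $g_h$ and swap the order of integration by Fubini, exactly as in the proof of Lemma \ref{liplemma}(b). Throughout, I identify $I_l$ and $I_r$ with $\exp(F_l)$ and $\exp(F_r)$ via Proposition \ref{propIout} and use $q_l=\exp(F_l)/\eta$, $q_r=\exp(F_r)/\eta$.

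For part (a), the first assertion is almost immediate from Lemma \ref{liplemma}(a): for $x<a$ we have $F(s)=0$ on $(-\infty,x]$, so $\int_{-\infty}^xF(s)h'(s)ds=0$, leaving $\htilde(x)=-\int_x^\infty(1-F(s))h'(s)ds=-\int_x^b(1-F(s))h'(s)ds$ (using $F\equiv 1$ on $[b,\infty)$). Splitting at $a$ and using $1-F(s)=1$ for $s<a$ gives the second form. For the representation of $g_h(x)$, I start from
\[
g_h(x)=\frac{1}{I_l(x)}\int_a^x\htilde(t)q_l(t)dt=-\frac{1}{I_l(x)}\int_x^a\htilde(t)q_l(t)dt
\]
and substitute $\htilde(t)=-\int_t^a h'(s)ds-\int_a^b(1-F(s))h'(s)ds$. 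The second piece is constant in $t$ and contributes $-Q_l(x)\int_a^b(1-F(s))h'(s)ds$ using $\int_x^aq_l(t)dt=-Q_l(x)$. For the first piece, swapping the order of integration via Fubini converts $\int_x^a(\int_t^ah'(s)ds)q_l(t)dt$ into $\int_x^ah'(s)\bigl(Q_l(s)-Q_l(x)\bigr)ds$. Collecting signs yields the first claimed formula. To pass to the second form, I rewrite $-Q_l(x)\int_x^ah'(s)ds=-Q_l(x)\bigl(\int_x^b(1-F(s))h'(s)ds-\int_a^b(1-F(s))h'(s)ds\bigr)$ (using the same splitting as for $\htilde$); the $\int_a^b$ term cancels the constant piece, leaving precisely $\int_x^aQ_l(s)h'(s)ds-Q_l(x)\int_x^b(1-F(s))h'(s)ds$ inside the brackets.

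Part (b) is entirely symmetric. For $x>b$, Lemma \ref{liplemma}(a) and the fact that $1-F(s)=0$ on $[x,\infty)$ yield $\htilde(x)=\int_a^xF(s)h'(s)ds$, split further as $\int_a^bF(s)h'(s)ds+\int_b^xh'(s)ds$ (since $F\equiv 1$ on $[b,\infty)$). Plugging $\htilde(t)=\int_a^bF(s)h'(s)ds+\int_b^th'(s)ds$ into $g_h(x)=\frac{1}{I_r(x)}\int_b^x\htilde(t)q_r(t)dt$ and applying Fubini to $\int_b^x(\int_b^th'(s)ds)q_r(t)dt=\int_b^xh'(s)\bigl(Q_r(x)-Q_r(s)\bigr)ds$ produces the first form; then replacing $Q_r(x)\int_b^xh'(s)ds$ by $Q_r(x)\int_a^xF(s)h'(s)ds-Q_r(x)\int_a^bF(s)h'(s)ds$ (the last term cancelling) yields the second form.

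The routine but slightly delicate step is the Fubini swap; one must check integrability of $h'(s)q_l(t)$ (resp.\ $h'(s)q_r(t)$) on the triangular region $\{(s,t):x<t<s<a\}$ (resp.\ $\{(s,t):b<t<s<x\}$), which holds because $h'$ is bounded (Lipschitz assumption on $h$), $q_l,q_r$ are locally integrable (Condition \ref{condout2} combined with Proposition \ref{propout2}, which gives $q_l<0$ on $(-\infty,a)$ and $q_r<0$ on $(b,\infty)$), and the integration region is compact. Once Fubini is justified, the remainder is just bookkeeping of signs and splittings of the integration interval at $a$ and $b$ using $F=0$ on $(-\infty,a)$ and $F=1$ on $[b,\infty)$.
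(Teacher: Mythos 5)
Your proof is correct and follows essentially the same route as the paper: both substitute the Lemma~\ref{liplemma}(a) representation of $\htilde$ into the defining integral for $g_h$ and swap the order of integration via Fubini; the only cosmetic difference is that you split $\htilde$ at $a$ \emph{before} applying Fubini (so the constant piece factors out immediately), whereas the paper applies Fubini to the unsplit form and splits at $a$ afterwards, encountering $Q_l(a\wedge s)$ in the process. Your explicit justification of the Fubini swap, which the paper omits, is a welcome addition.
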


\begin{proof}
We only prove (a) since the proof of (b) is very similar. The first claim follows from Lemma \ref{liplemma} (a) since $F(s)=0$ for 
$s<a$ and $F(s)=1$ for $x\geq b$. The second claim follows from the first one and from Fubini's theorem by

\begin{eqnarray*}
&&g_h(x)\\
&=&\frac{1}{I_l(x)}\int_a^x\htilde(y)q_l(y)dy=\frac{1}{I_l(x)}\int_a^x\Biggl(-\int_y^b\bigl(1-F(s)\bigr)h'(s)ds\Biggr)q_l(y)dy\\
&=&\frac{1}{I_l(x)}\int_x^a\Biggl(\int_y^b\bigl(1-F(s)\bigr)h'(s)ds\Biggr)q_l(y)dy\\
&=&\frac{1}{I_l(x)}\int_x^b\bigl(1-F(s)\bigr)h'(s)\Biggl(\int_x^{a\wedge s}q_l(y)dy\Biggr)ds\\
&=&\frac{1}{I_l(x)}\int_x^b\bigl(1-F(s)\bigr)h'(s)\bigl(Q_l(a\wedge s)-Q_l(x)\bigr)ds\\
&=&\frac{1}{I_l(x)}\int_x^a h'(s)\bigl(Q_l(s)-Q_l(x)\bigr)ds+\frac{1}{I_l(x)}\int_a^b\bigl(1-F(s)\bigr)h'(s)ds\bigl(Q_l(a)-Q_l(x)\bigr)\\
&=&\frac{1}{I_l(x)}\Biggl(-\int_x^a\bigl(Q_l(x)-Q_l(s)\bigr)h'(s)ds
-Q_l(x)\int_a^b\bigl(1-F(s)\bigr)h'(s)ds\Biggr)\,.
\end{eqnarray*}

This is the first representation for $g_h(x)$ in the assertion. The second one follows, since $1-F(s)=1$ for $s<a$ and hence

\begin{eqnarray*}
 -\int_x^a\bigl(Q_l(x)-Q_l(s)\bigr)h'(s)ds=\int_x^a Q_l(s)h'(s)ds
-Q_l(x)\int_x^a(1-F(s))h'(s)ds\,.
\end{eqnarray*}

\end{proof}

\begin{proof}[Proof of Proposition \ref{propout4}]
We only prove (a) and (c), since the proofs of (b) and (d) are similar. To prove (a), we observe that by Lemma \ref{outlemma2} we have 
\[g_h(x)=\frac{1}{I_l(x)}\Biggl(-\int_x^a\bigl(Q_l(x)-Q_l(s)\bigr)h'(s)ds
-Q_l(x)\int_a^b\bigl(1-F(s)\bigr)h'(s)ds\Biggr)\,.\]
Since $Q_l$ is decreasing ($Q_l'=q_l<0$) and positive on $(-\infty,a)$ this implies

\[\abs{g_h(x)}\leq\frac{\fnorm{h'}}{I_l(x)}\Biggl(\int_x^a\bigl(Q_l(x)-Q_l(s)\bigr)ds+Q_l(x)\int_a^b\bigl(1-F(s)\bigr)ds\Biggr)\,.\]

By Lemma \ref{genlemma1} and Lemma \ref{outlemma1} (a) the right hand side equals

\begin{eqnarray*}
&&\frac{\fnorm{h'}}{I_l(x)}\Biggl((a-x)Q_l(x)+xQ_l(x)-\int_a^x sq_l(s)ds+ Q_l(x)\bigl(E[Z]-a\bigr)\Biggr)\\
&=&\frac{\fnorm{h'}}{I_l(x)}\Biggl(Q_l(x)E[Z]-\int_a^x sq_l(s)ds\Biggr)\,,
\end{eqnarray*}

which is the claimed bound. Now, we turn to the proof of (c). By Stein's equation (\ref{gensteineq2}) and Lemma \ref{outlemma2} (a) we have for each $x\in(-\infty,a)$:

\begin{eqnarray*}
&&g_h'(x)=\frac{\htilde(x)}{\eta(x)}-\frac{\ga(x)}{\eta(x)}g_h(x)=-\frac{1}{\eta(x)}\int_x^b\bigl(1-F(s)\bigr)h'(s)ds\\
&&- \frac{\ga(x)}{\eta(x)I_l(x)}\Biggl(\int_x^a Q_l(s)h'(s)ds-Q_l(x)\int_x^b\bigl(1-F(s)\bigr)h'(s)ds\Biggr)\\
&=&\frac{-1}{\eta(x)I_l(x)}\Biggl(\ga(x)\int_x^aQ_l(s)h'(s)ds+\int_x^b\bigl(1-F(s)\bigr)h'(s)ds\Bigl(I_l(x)-Q_l(x)\ga(x)\Bigr)\Biggr)
\end{eqnarray*}

By Lemma \ref{genlemma1} (a) we have 

\[\int_x^b\bigl(1-F(s)\bigr)ds=a-x+\int_a^b\bigl(1-F(s)\bigr)ds=a-x+E[Z]-a=E[Z]-x\,.\]

Hence, by Lemma \ref{outlemma1} (a) this implies

\begin{eqnarray*}
&&\abs{g_h'(x)}\leq\frac{\fnorm{h'}}{-\eta(x)I_l(x)} \Biggl(\ga(x)\int_x^a Q_l(s)ds
+\int_x^b \bigl(1-F(s)\bigr)ds\Bigl(Q_l(x)\ga(x)-I_l(x)\Bigr)\Biggr)\\
&=&\frac{\fnorm{h'}}{-\eta(x)I_l(x)}\Biggl(\ga(x)\Bigl(-xQ_l(x)+\int_a^xtq_l(t)dt\Bigr)+\bigl(E[Z]-x\bigr) \Bigl(Q_l(x)\ga(x)-I_l(x)\Bigr)\Biggr)
\end{eqnarray*}

which was to be shown.
\end{proof}

\begin{proof}[Proof of Corollary \ref{outcor1}]
%This follows immediately from Proposition \ref{propout4}, Corollary \ref{gencor2} and the relations
In this case, we have the relations 

\begin{eqnarray}\label{Igas}
I_l(x)&=&c\int_a^x\bigl(E[Z]-t\bigr)q_l(t)dt=c\Bigl(Q_l(x)E[Z]-\int_a^xt q_l(t)dt\Bigr)\\
I_r(x)&=&c\int_b^x\bigl(E[Z]-t\bigr)q_r(t)dt=c\Bigl(Q_r(x)E[Z]-\int_b^xt q_r(t)dt\Bigr)\,.\nonumber
\end{eqnarray}

These together with Proposition \ref{propout4} (a), (b) and Corollary \ref{gencor2} immediately imply (a). As to (b), by (\ref{Igas}) we have 

\begin{eqnarray*}
&&\bigl(E[Z]-x\bigr) \Bigl(Q_l(x)\ga(x)-I_l(x)\Bigr)\\
&=&\bigl(E[Z]-x\bigr)\Bigl(cE[Z]Q_l(x)-cxQ_l(x)-c\bigl(Q_l(x)E[Z]-\int_a^xt q_l(t)dt\bigr)\Bigr)\\
&=&c\bigl(E[Z]-x\bigr)\Bigl(-xQ_l(x)+\int_a^xt q_l(t)dt\Bigr)\\
&=&\ga(x)\Bigl(-xQ_l(x)+\int_a^xt q_l(t)dt\Bigr)\,.
\end{eqnarray*}

This and Proposition \ref{propout4} (c) imply claim (b). Assertion (c) may be proved similarly.
\end{proof}

\begin{proof}[Proof of Proposition \ref{genpropgz}]
For any $x\in(a,b)$ we have:
\begin{eqnarray*}
g_z(x)&=&\frac{1}{I(x)}\int_a^x\bigl(1_{(-\infty,z]}(t)-P(Z\leq z)\bigr)p(t)dt\\
&=&\frac{1}{I(x)}\Bigl(\int_a^{x\wedge z} p(t)dt-F(z)F(x)\Bigr)\\
&=&\frac{F(x\wedge z)-F(z)F(x)}{I(x)}\,,
\end{eqnarray*}

proving the desired representation of $g_z$ inside the interval $(a,b)$.
Now, for $x\in(a,b)$ let $M(x):=\frac{F(x)}{I(x)}$ and $N(x):=\frac{1-F(x)}{I(x)}$. Then we have 

\begin{eqnarray*}
M'(x)&=&\frac{p(x)I(x)-p(x)\ga(x)F(x)}{I(x)^2}=\frac{p(x)}{I(x)^2}\Bigl(I(x)-\ga(x)F(x)\Bigr)\\
&=&\frac{p(x)}{I(x)^2}H(x)>0
\end{eqnarray*}

and

\begin{eqnarray*}
N'(x)&=&\frac{-p(x)I(x)-p(x)\ga(x)(1-F(x))}{I(x)^2}=\frac{-p(x)}{I(x)^2}\Bigl(I(x)+\bigl(1-F(x)\bigr)\ga(x)\Bigr)\\
&=&\frac{-p(x)}{I(x)^2}G(x)<0\,,
\end{eqnarray*}

since $G(x)=(I(x)+(1-F(x))\ga(x))$ is positive by Proposition \ref{genprop4}. Thus, $M$ is strictly increasing and $N$ is strictly decreasing on $(a,b)$. Since 
$g_z(x)=(1-F(z))M(x)$ for $x\in(a,z]$ and $g_z(x)=F(z)N(x)$ for $x\in(z,b)$, this implies, that 
$\sup_{x\in(a,b)}g_z(x)=g_z(z)=\frac{F(z)(1-F(z)}{I(z)}$. It also implies the claimed representation of $g_z'(x)$ for 
$x\in(a,b)\setminus\{z\}$. 
Furthermore, by de l'H\^{o}pital's rule, we have 

\begin{eqnarray*}
\lim_{x\searrow a}g_z(x)&=&(1-F(z))\lim_{x\searrow a}M(x)=(1-F(z))\lim_{x\searrow a}\frac{p(x)}{p(x)\ga(x)}\\
&=&\lim_{x\searrow a}\frac{(1-F(z))}{\ga(x)}=\frac{(1-F(z))}{\ga(a)}
\end{eqnarray*}

and

\begin{eqnarray*}
\lim_{x\nearrow b}g_z(x)&=&F(z)\lim_{x\nearrow b}N(x)=F(z)\lim_{x\nearrow b}\frac{-p(x)}{p(x)\ga(x)}\\
&=&\lim_{x\nearrow b}\frac{-F(z)}{\ga(x)}=\frac{-F(z)}{\ga(b)}\,.
\end{eqnarray*}

Note, that these limits could also be derived from Proposition \ref{propout1}.\\
Next, consider $x\in(-\infty,a)$. For such an $x$ we have

\[g_z(x)=\frac{1}{I_l(x)}\int_a^x\bigl(1_{(-\infty,z]}(t)-P(Z\leq z)\bigr)q_l(t)dt=\frac{(1-F(z))Q_l(x)}{I_l(x)}\,.\]

Moreover we have

\begin{eqnarray*}
\frac{d}{dx}\frac{Q_l(x)}{I_l(x)}&=&\frac{q_l(x)I_l(x)-\ga(x)q_l(x)Q_l(x)}{I_l(x)^2}\\
&=&\frac{-q_l(x)}{I_l(x)^2}\Bigl(\ga(x)Q_l(x)-I_l(x)\Bigr)>0
\end{eqnarray*}

by Lemma \ref{outlemma1}. Thus, $g_z$ is increasing on $(-\infty,a)$ and hence, again by de l'H\^{o}pital's rule, 

\begin{eqnarray*}
\sup_{x\in(-\infty,a)}\abs{g_z(x)}&=&\sup_{x\in(-\infty,a)}g_z(x)=(1-F(z))\lim_{x\nearrow a}\frac{Q_l(x)}{I_l(x)}\\
&=&(1-F(z))\lim_{x\nearrow a}\frac{q_l(x)}{q_l(x)\ga(x)}=(1-F(z))\lim_{x\nearrow a}\frac{1}{\ga(x)}\\
&=&\frac{1-F(z)}{\ga(a)}\,.
\end{eqnarray*}

Since $g_z'(x)=(1-F(z))\frac{d}{dx}\frac{Q_l(x)}{I_l(x)}$ we have also derived the desired formula for $g_z'(x)$ for 
$x\in(-\infty,a)$. The calculations for $x\in(b,\infty)$ are completely analogous and therefore omitted. From our computations we can already infer, that $\fnorm{g_z}=\frac{F(z)(1-F(z)}{I(z)}$. So, it remains to show that this quantity is bounded in $z\in(a,b)$. Since it is a continuous function of $z$, we only have to show that it has finite limits on the edge of the interval $(a,b)$. But, of course, 

\[\lim_{z\searrow a}\frac{F(z)(1-F(z)}{I(z)}=\lim_{z\searrow a}\frac{F(z)}{I(z)}=\lim_{z\searrow a}M(z)=\frac{1}{\ga(a)}\]

and, similarly, $\lim_{z\nearrow b}\frac{F(z)(1-F(z)}{I(z)}=\lim_{z\nearrow b}N(z)=-\frac{1}{\ga(b)}$. This concludes the proof.
\end{proof}

\subsection{Proofs from Section \ref{mainresultsbeta}}

\begin{proof}[Proof of Proposition \ref{gzk}]
Let us focus on the case $z\in(-1,1)$, the cases $z<-1$ and $z>1$ being similar and even easier. Of course, $g_z$ is continuously differentiable on each of the intervals $(-\infty,-1)$, $(-1,z)$, $(z,1)$ and $(1,\infty)$ and from Proposition \ref{propout1} we know that $g_z$ is continuous on $\R$ since $z\not=\pm1$. By a result from calculus, to show that, for example, $g_z$ is $C^1$ on $[-1,z]$, it is sufficient to prove that $\lim_{x\searrow-1}g_z'(x)$ exists in $\R$. Tot this end we recall from Remark \ref{outrem2} that for $-1<x<z$ we have 

\[g_z'(x)=\bigl(1-F(z)\bigr)\frac{p(x)H(x)}{I(x)^2}= \bigl(1-F(z)\bigr)\frac{H(x)}{(1-x^2)p(x)}\,.\]

Using de l'H\^{o}pital's rule \ref{Hospital} and Lemma \ref{betalemma1} (a) below, we obtain
\begin{eqnarray*}
\lim_{x\searrow-1}g_z'(x)&=&\bigl(1-F(z)\bigr) \lim_{x\searrow-1}\frac{H(x)}{(1-x^2)p(x)}\\
&=&\bigl(1-F(z)\bigr) \lim_{x\searrow-1}\frac{(\al+\be+2)F(x)}{(1-x^2)p(x)[\be-\al-(\al+\be+4)x]}\\
&=&\frac{1-F(z)}{2\be+4}\lim_{x\searrow-1}\frac{(\al+\be+2)F(x)}{(1-x^2)p(x)}\\
&=&\frac{1-F(z)}{2\be+4}\lim_{x\searrow-1}\frac{(\al+\be+2)p(x)}{p(x)[\be-\al-(\al+\be+2)x]}\\
&=&\frac{\bigl(1-F(z)\bigr)\bigl(\al+\be+2\bigr)}{(2\be+4)(2\be+2)}\,.
\end{eqnarray*}

Hence, $g_z$ is continuously differentiable on $[-1,z]$. That $g_z$ is continuously differentiable on the intervals $(-\infty,1]$, 
$[z,1]$ and $[1,\infty)$ can be proved similarly by using the representations for $g_z'(x)$ from Proposition \ref{genpropgz}. It also turns out, that the right and left hand derivatives of $g_z$ at $\pm1$ coincide, so that it is in fact differentiable at these points.\\
So, we conclude, that $g_z$ is continuous and piecewise continuously differentiable on $\R$. Next, we show that $g_z$ vanishes at infinity. This even applies to any bounded test function $h$, since e.g. for $x>1$ we have

\begin{eqnarray*}
\abs{g_h(x)}&=&\Bigl|\exp(-F_r(x))\int_1^x\htilde(t)q_r(t)dt\Bigr|\\
&\leq&\fnorm{\htilde}\frac{\int_1^x q_r(t)dt}{(x-1)^{\al+1}(1+x)^{\be+1}} 
\end{eqnarray*}

and by de l'H\^{o}pital's rule

\begin{eqnarray*}
&&\lim_{x\to\infty} \frac{\int_1^x q_r(t)dt}{(x-1)^{\al+1}(1+x)^{\be+1}}
=\lim_{x\to\infty} \frac{\int_1^x(t-1)^{\al}(1+t)^{\be}dt}{(x-1)^{\al+1}(1+x)^{\be+1}}\\
&=&\lim_{x\to\infty}\frac{(x-1)^{\al}(1+x)^{\be}}{(x-1)^{\al}(1+x)^{\be}\bigl((\al+1)(1+x)+(\be+1)(x-1)\bigr)}\\
&=&\lim_{x\to\infty}\frac{1}{(\al+1)(1+x)+(\be+1)(x-1)}=0\,.
\end{eqnarray*}

Hence, $ \lim_{x\to\infty}g_h(x)=0$. Similarly, one can prove that $ \lim_{x\to-\infty}g_h(x)=0$. \\
It remains to show that 

\[\int_{-1}^{1}\abs{g_z'(x)}\rho_{\albe}(x)dx=\int_{-1}^1\abs{g_z'(x)}(1-x^2)p(x)dx<\infty\,.\]

To this end, it suffices to see that the function $ \abs{g_z'(x)}(1-x^2)p(x)$ is bounded on $(-1,z)$ and on $(z,1)$. 
Since it is continuous on $(-1,z]$ and on $(z,1)$ (where $g_z'(z):=\lim_{x\nearrow z}g_z'(x)$ for definiteness), this claim will follow 
if we have proved that $\lim_{x\to\pm1} g_z'(x)(1-x^2)p(x)=0$. For $x\in(-1,z)$ we have 

\[g_z'(x)(1-x^2)p(x)=\bigl(1-F(z)\bigr)\frac{H(x)}{1-x^2}\quad\text{and}\]

\[\lim_{x\searrow-1}\frac{H(x)}{1-x^2}=\lim_{x\searrow-1}\frac{(\al+\be+2)F(x)}{-2x}=0\,,\]

proving the claim for $-1$. Since it may be proved analogously for $+1$ the proof is complete.
\end{proof}

The following lemma will be useful for the proof of Proposition \ref{propbeta2}.

\begin{lemma}\label{betalemma1}
The functions $p$, $q_l$, $q_r$ and $\eta$ satisfy the following equations for each integer $k\geq1$:
\begin{enumerate}[{\normalfont (a)}]
\item $\frac{d}{dx}\bigl(\eta(x)^kp(x)\bigr)=\eta(x)^{k-1}p(x)\bigl[(k-1)\eta'(x)+\ga(x)\bigr]$ for each $x\in(-1,1)$
\item $\frac{d}{dx}\bigl(\eta(x)^kq_l(x)\bigr)=\eta(x)^{k-1}q_l(x)\bigl[(k-1)\eta'(x)+\ga(x)\bigr]$ for each $x\in(-\infty,-1)$
\item $\frac{d}{dx}\bigl(\eta(x)^kq_r(x)\bigr)=\eta(x)^{k-1}q_r(x)\bigl[(k-1)\eta'(x)+\ga(x)\bigr]$ for each $x\in(1,\infty)$
\end{enumerate}
\end{lemma}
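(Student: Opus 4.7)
The plan is to verify all three identities by a single uniform product-rule calculation, after observing that the logarithmic derivatives of $p$, $q_l$ and $q_r$ are given by one and the same rational function on their respective domains. First I would differentiate the explicit formulas
\[
p(x)=C(\al,\be)(1-x)^{\al}(1+x)^{\be},\quad q_l(x)=(1-x)^{\al}(-1-x)^{\be},\quad q_r(x)=(x-1)^{\al}(1+x)^{\be}
\]
and check by a one-line $\log$-derivative computation that $p'/p$, $q_l'/q_l$ and $q_r'/q_r$ all equal
$\psi(x):=(\be-\al-(\al+\be)x)/(1-x^2)$ on their respective intervals.

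Next I would note that the relation $\eta'(x)=\ga(x)-\eta(x)\psi(x)$ from Proposition \ref{genprop2}(a), originally asserted only on $(-1,1)$, in fact persists as an identity on all of $\R\setminus\{-1,1\}$: both sides are rational functions on $\R$ that coincide on the open interval $(-1,1)$; equivalently, one verifies at once that $\ga(x)-\eta(x)\psi(x)=-2x=\eta'(x)$ identically.

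With these two ingredients the three claims reduce to one common short computation. For any $\phi\in\{p,q_l,q_r\}$ on its respective domain the product rule gives
\[
\frac{d}{dx}\bigl(\eta(x)^k\phi(x)\bigr)=k\eta(x)^{k-1}\eta'(x)\phi(x)+\eta(x)^k\phi'(x)=\eta(x)^{k-1}\phi(x)\bigl(k\eta'(x)+\eta(x)\psi(x)\bigr),
\]
and substituting $\eta\psi=\ga-\eta'$ collapses the bracket to $(k-1)\eta'+\ga$, which is exactly the asserted formula.

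I do not expect any genuine obstacle here; the only point that requires some care is getting signs right in the $\log$-derivative of $q_l$ on $(-\infty,-1)$, where one must differentiate $\log(-1-x)$ rather than $\log(1+x)$ since $1+x$ is negative on that interval. All remaining work is bookkeeping.
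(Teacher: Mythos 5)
Your argument is correct. The paper proves (a) by starting from the differential equation $\eta'=\ga-\psi\eta$ from Proposition \ref{genprop2}(a), multiplying by $p$ to obtain $\eta p'=p(\ga-\eta')$, and then applying the product rule; for (b) and (c) it works abstractly from the definitions $\eta q_l=\exp(F_l)$, $\eta q_r=\exp(F_r)$ and the fact that $F_l'=F_r'=\ga/\eta$, deriving again $\eta q_l'=q_l(\ga-\eta')$ and the analogue for $q_r$. You instead compute the logarithmic derivatives of $p$, $q_l$, $q_r$ directly from their closed forms, observe they all equal the same rational function $\psi$, and verify $\eta'=\ga-\eta\psi$ as an identity of rational functions on $\R\setminus\{-1,1\}$; with that you run a single product-rule computation that covers all three parts at once. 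The two routes arrive at the identical key relation $\eta\phi'=\phi(\ga-\eta')$, so the substance is the same; what your version buys is uniformity across the three cases and independence from the abstract machinery of $F_l$, $F_r$, at the cost of resting on the explicit Beta-specific formulas rather than the structural definitions (which would matter if one wanted to extend the lemma beyond the Beta family). Your remark on the sign issue with $\log(-1-x)$ on $(-\infty,-1)$ is the one place where a slip would be easy, and you have it right.
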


\begin{proof}
First we prove (a). By (\ref{etadgl}) we have, multiplying by $p(x)$,

\[\eta(x)p'(x)=p(x)\bigl(\ga(x)-\eta'(x)\bigr)\,.\]

Hence, by the product rule we obtain

\begin{eqnarray*}
\frac{d}{dx}\bigl(\eta(x)^kp(x)\bigr)&=&k\eta(x)^{k-1}\eta'(x)p(x)+\eta(x)^kp'(x)\\
&=&\eta(x)^{k-1}\bigl[k\eta'(x)p(x)+\eta(x)p'(x)\bigr]\\
&=&\eta(x)^{k-1}p(x)\bigl[k\eta'(x)+\ga(x)-\eta'(x)\bigr]\\
&=&\eta(x)^{k-1}p(x)\bigl[(k-1)\eta'(x)+\ga(x)\bigr]\,,
\end{eqnarray*}

proving (a). As to (b), we observe that by the definition of $q_l=\frac{\exp\circ F_l}{\eta}$ we have on the one hand

\begin{eqnarray*}
\frac{d}{dx}\bigl(\eta(x)q_l(x)\bigr)&=&\frac{d}{dx}\exp\bigl(F_l(x)\bigr)=\exp\bigl(F_l(x)\bigr)\frac{\ga(x)}{\eta(x)}\\
&=&\ga(x)q_l(x)
\end{eqnarray*}

and on the other hand, by the product rule,

\[\frac{d}{dx}\bigl(\eta(x)q_l(x)\bigr)=\eta'(x)q_l(x)+q_l'(x)\eta(x)\,.\]

These two equations yield

\[\eta(x)q_l'(x)=\ga(x)q_l(x)-\eta'(x)q_l(x)=q_l(x)\bigl(\ga(x)-\eta'(x)\bigr)\,.\]

Now the proof follows the lines of proof for $p$ as above. Similarly one may prove (c).
\end{proof}

\begin{proof}[Proof of Proposition \ref{propbeta2}]
Assertion (a) immediately follows from Corollary \ref{outcor1} (a) since in this case $c=\al+\be+2$. Now, we turn to the proof of (b). First, consider $x\in(-1,1)$. By Corollary \ref{gencor2} (b) we have for $x\in(-1,1)$:

\[\abs{g_h'(x)}\leq\frac{2\fnorm{h'}}{\al+\be+2}\frac{H(x)G(x)}{I(x)\eta(x)}\,.\]

For $x\in(-1,1)$ let

\[S(x):=\frac{H(x)G(x)}{I(x)\eta(x)}=\frac{H(x)G(x)}{\eta(x)^2p(x)}\,.\]

Then $S$ is continuous on $(-1,1)$ and hence, to show that $S$ is bounded on $(-1,1)$, it suffices to prove that $S$ has finite limits at $\pm1$. Since $\lim_{x\searrow-1}G(x)=\ga(-1)=2\be+2$ we obtain, using Lemma \ref{betalemma1} and de l'H\^{o}pital's rule:

\begin{eqnarray*}
&&\lim_{x\searrow-1}S(x)=\lim_{x\searrow-1}G(x)\lim_{x\searrow-1}\frac{H(x)}{\eta(x)^2p(x)}\\
&=&\ga(-1)\lim_{x\searrow-1}\frac{H'(x)}{\frac{d}{dx}\bigl(\eta(x)^2p(x)\bigr)}
=\ga(-1)\lim_{x\searrow-1}\frac{(\al+\be+2)F(x)}{\eta(x)p(x)\bigl[\eta'(x)+\ga(x)\bigr]}\\
&=&\frac{\ga(-1)(\al+\be+2)}{\lim_{x\searrow-1}\bigl[\eta'(x)+\ga(x)\bigr]}\lim_{x\searrow-1}\frac{F(x)}{\eta(x)p(x)}\\
&=&\frac{\ga(-1)(\al+\be+2)}{\lim_{x\searrow-1}\bigl[-2x-(\al+\be+2)x+\be-\al\bigr]}\lim_{x\searrow-1}\frac{p(x)}{\ga(x)p(x)}\\
&=&\frac{\ga(-1)(\al+\be+2)}{2\be+4}\lim_{x\searrow-1}\frac{1}{\ga(x)}
=\frac{\ga(-1)(\al+\be+2)}{2\be+4}\frac{1}{\ga(-1)}\\
&=&\frac{\al+\be+2}{2\be+4}<\infty
\end{eqnarray*}

Here we have used, that $H'(x)=-\ga'(x)F(x)=(\al+\be+2)F(x)$. Similarly one shows that

\[\lim_{x\nearrow1}S(x)=\frac{\al+\be+2}{2\al+4}\,.\]

Thus, we have shown that $\sup_{x\in(-1,1)}S(x)<\infty$. Now, we consider $x\in(-\infty,-1)$. From Corollary \ref{outcor1} (b) we have

\[\abs{g_h'(x)}\leq2\fnorm{h'}\frac{\ga(x)\Bigl(-xQ_l(x)+\int_a^xtq_l(t)dt\Bigr)}{-\eta(x)I_l(x)}\,.\]

For $x\in(-\infty,-1)$ we consider the function 

\[S_l(x):=\frac{\ga(x)\Bigl(-xQ_l(x)+\int_a^xtq_l(t)dt\Bigr)}{-\eta(x)I_l(x)}
=\frac{\ga(x)\int_a^xQ(t)dt}{\eta(x)^2q_l(x)}\,.\]

Clearly, $S_l$ is a continuous function on $(-\infty,-1)$. To show that it is bounded, it thus suffices to prove that 
$\lim_{x\nearrow-1}S_l(x)<\infty$ and $\lim_{x\to-\infty}S_l(x)<\infty$.
Using Lemma \ref{betalemma1} and de l'H\^{o}pital's rule, we obtain

\begin{eqnarray*}
&&\lim_{x\nearrow-1}S_l(x)=\lim_{x\nearrow-1}\ga(x)\lim_{x\nearrow-1}\frac{\int_a^xQ(t)dt}{\eta(x)^2q_l(x)}\\
&=&\ga(-1)\lim_{x\nearrow-1}\frac{Q_l(x)}{\eta(x)q_l(x)\bigl[\eta'(x)+\ga(x)\bigr]}\\
&=&\frac{\ga(-1)}{\lim_{x\nearrow-1}\bigl[\eta'(x)+\ga(x)\bigr]}\lim_{x\nearrow-1}\frac{Q_l(x)}{\eta(x)q_l(x)}\\
&=&\frac{\ga(-1)}{2\be+4}\lim_{x\nearrow-1}\frac{q_l(x)}{\ga(x)q_l(x)}
=\frac{\ga(-1)}{2\be+4}\lim_{x\nearrow-1}\frac{1}{\ga(x)}\\
&=&\frac{1}{2\be+4}<\infty\,.
\end{eqnarray*}

Next, we will show that $\lim_{x\to-\infty}S_l(x)=0$. We actually have

\[\lim_{x\to-\infty}\frac{\ga(x)}{\eta(x)}=\lim_{x\to-\infty}\frac{-(\al+\be+2)x+\be-\al}{1-x^2}=0\]

and by de l'H\^{o}pital's rule, also

\[\lim_{x\to-\infty}\frac{\int_a^xQ(t)dt}{\eta(x)q_l(x)}=\lim_{x\to-\infty}\frac{q_l(x)}{q_l(x)\ga(x)}
=\lim_{x\to-\infty}\frac{1}{\ga(x)}=0\,.\]

Here we have used that $\eta(x)q_l(x)=-(1-x)^{\al+1}(-1-x)^{\be+1}\rightarrow-\infty$ as $x\to-\infty$. 
Hence, $\lim_{x\to-\infty}S_l(x)=0$ and $\sup_{x\in(-\infty,-1)}S_l(x)<\infty$. Since we can show in a similar manner that 
$\sup_{x\in(1,\infty)}S_r(x)<\infty$, where $S_r$ is defined in the obvious way, the proof is complete.
\end{proof}

\begin{proof}[Proof of Lemma \ref{betalemma2}]
If $E[u(Z)]=0$, then the usual Stein solution is bounded on $(-1,1)$ by Proposition \ref{propbeta1}.
For the converse, let us assume that $E[u(Z)]\not=0$. As was already noted in Section \ref{mainresultsgen}, the solutions of the homogeneous equation corresponding to (\ref{steineqnc}) are exactly the multiples of $\frac{1}{\eta p}$.
Thus, every solution $f$ of (\ref{steineqnc}) has the form  

\[f(x)=\frac{\int_{-1}^x u(t)p(t)dt+c}{\eta(x)p(x)}\,,\quad x\in(-1,1)\,,\]

for some $c\in\R$. If $c=0$, then 

\[\lim_{x\nearrow1}f(x)=\lim_{x\nearrow1}\frac{\int_{-1}^x u(t)p(t)dt}{\eta(x)p(x)}=\pm\infty\,,\]

since $E[u(Z)]\not=0$. Hence, $f$ is unbounded near $1$. If $c\not=0$ we have 

\[\lim_{x\searrow-1}f(x)=\lim_{x\searrow-1}\frac{c}{\eta(x)p(x)}=\pm\infty\,.\]

So, $f$ is unbounded near $-1$. Hence, in any case $f$ is unbounded on $(-1,1)$.

\end{proof}

\normalem
\bibliography{steinbeta}{}
\bibliographystyle{alpha}

\end{document}